\def\R{\mathbb{R}}
\def\C{\mathbb{C}}
\def\M{\mathbb{M}}
\def\D{\mathbb{D}}
\newcommand{\ben}{\begin{enumerate}}
\newcommand{\bit}{\begin{itemize}}
\newcommand{\een}{\end{enumerate}}
\newcommand{\eit}{\end{itemize}}
\newcommand{\ed}{\end{document}}
\def\cU{\mathcal{U}}
\def\cS{\mathcal{S}}
\def\cR{\mathcal{R}}
\def\cW{\mathcal{W}}
\def\cS{\mathcal{S}}
\def\cV{\mathcal{V}}
\def\cL{\mathcal{L}}
\def\cG{\mathcal{G}}
\def\cF{\mathcal{F}}
\let\hat=\widehat
\let\landa=\lambda
\let\alfa=\alpha
\let\parc=\partial
\def\ep{\varepsilon}
\def\landa{\lambda}
\def\flecha{\rightarrow}
\def\esiz{\langle}
\def\esde{\rangle}
\def\cte.{\mathop{\rm cte.}\nolimits}
\def\cosh{\mathop{\rm cosh }\nolimits}
\def\tanh{\mathop{\rm tanh }\nolimits}
\def\R{\mathbb{R}}
\def\C{\mathbb{C}}
\def\D{\mathbb{D}}
\def\H{\mathbb{H}}
\def\S{\mathbb{S}}
\def\Ek{\mathbb{E}^3 (\kappa,\tau)}
\def\Rk{\mathcal{R}^3(\kappa,\tau)}
\def\Ek{\mathbb{E}^3 (\kappa,\tau)}
\def\Rk{\mathcal{R}^3(\kappa,\tau)}
\titleformat{\subsection}[runin]
{\bfseries} {\thesubsection{.}}{0.15cm}{}[.]
\titleformat{\subsubsection}[runin]
{\em}{\thesubsubsection{.}}{0.15cm}{}[.]
\newtheorem{theorem}{Theorem}[section]
\newtheorem{lemma}[theorem]{Lemma}
\newtheorem{proposition}[theorem]{Proposition}
\newtheorem{remark}[theorem]{Remark}
\newtheorem{corollary}[theorem]{Corollary}
\newtheorem{definition}[theorem]{Definition}
\newtheorem{example}[theorem]{Example}
\theoremstyle{definition}
\numberwithin{equation}{section}
\numberwithin{figure}{section}
\begin{document}
\fancyhead[LO]{Rotational symmetry of Weingarten spheres}
\fancyhead[RE]{José A. Gálvez, Pablo Mira}
\fancyhead[RO,LE]{\thepage}

\thispagestyle{empty}

\begin{center}
{\bf \LARGE Rotational symmetry of Weingarten spheres\\[0.2cm] in homogeneous three-manifolds}
\vspace*{5mm}

\hspace{0.2cm} {\Large José A. Gálvez, Pablo Mira}
\end{center}

\footnote[0]{\vspace*{-0.4cm} \emph{Mathematics Subject Classification}: Primary 53A10; Secondary 49Q05, 53C42}

\vspace*{7mm}

\begin{quote}
{\small
\noindent {\bf Abstract}\hspace*{0.1cm}
Let $M$ be a simply connected homogeneous three-manifold with isometry group of dimension $4$, and let $\Sigma$ be any compact surface of genus zero immersed in $M$ whose mean, extrinsic and Gauss curvatures satisfy a smooth elliptic relation $\Phi(H,K_e,K)=0$. In this paper we prove that $\Sigma$ is a sphere of revolution, provided that the unique inextendible rotational surface $S$ in $M$ that satisfies this equation and touches its rotation axis orthogonally has bounded second fundamental form. 
In particular, we prove that: (i) any elliptic Weingarten sphere immersed in $\H^2\times \R$ is a rotational sphere. (ii) Any sphere of constant positive extrinsic curvature immersed in $M$ is a rotational sphere, and (iii) Any immersed sphere in $M$ that satisfies an elliptic Weingarten equation $H=\phi(H^2-K_e)\geq a>0$ with $\phi$ bounded, is a rotational sphere. As a very particular case of this last result, we recover the Abresch-Rosenberg classification of constant mean curvature spheres in $M$. 

\vspace*{0.1cm}

}
\end{quote}


\section{Introduction}

An immersed surface $\Sigma$ in a Riemannian three-manifold $M$ is an \emph{elliptic Weingarten surface} if its mean curvature $H=(\kappa_1+\kappa_2)/2$ and its extrinsic curvature $K_e=\kappa_1 \kappa_2$ satisfy a smooth elliptic relation $W(H,K_e)=0$. This equation can be rewritten as
\begin{equation}\label{speci1}
H=\phi(H^2-K_e), \hspace{1cm} \phi(t)\in C^{\8}([0,\8)), \hspace{0.5cm} 4t (\phi'(t))^2<1,
\end{equation} 
where the inequality describes the ellipticity of the equation. Elliptic Weingarten surfaces are also called \emph{special Weingarten surfaces} in many works; see e.g. \cite{Ch,HW1,RS,ST}. It follows from classical works by Hopf \cite{Ho0}, Chern \cite{Ch} and Hartman and Wintner \cite{HW1} that \emph{any compact elliptic Weingarten surface of genus zero immersed in $\R^3,\S^3$ or $\H^3$ is a round sphere. }

A simply connected Riemannian homogeneous three-manifold $M$ has isometry group of dimension $n=6$, $4$ or $3$. If $n=6$, $M$ is a space form $\R^3$, $\S^3(c)$ or $\H^3(c)$. If $n=3$, $M$ is a Lie group with a left invariant metric and a discrete isotropy group (see \cite{mpe11}). If $n=4$, $M$ admits a nice geometric structure: these spaces are rotationally invariant, and can be seen as Riemannian fibrations over two-dimensional spaces of constant curvature (see Section \ref{sec2} for details). They include, in particular, the product spaces $\H^2\times \R$, $\S^2\times \R$, and can be viewed as the most symmetric Riemannian three-manifolds other than space forms. They are usually called $\Ek$ spaces.

A deeply influential result in the theory of constant mean curvature (CMC) surfaces in homogeneous manifolds is the solution by Abresch and Rosenberg of the so-called Hopf uniqueness problem for \emph{immersed spheres} (i.e. compact surfaces of genus zero) of constant mean curvature in these rotationally symmetric homogeneous three-manifolds:

\begin{theorem}[\cite{AR1,AR2}]\label{arth}
Any CMC sphere immersed in $M=\Ek$ is a rotational sphere.
\end{theorem}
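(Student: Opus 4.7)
The plan is to follow the Hopf scheme, in the form adapted to $\Ek$ by Abresch and Rosenberg, using a holomorphic quadratic differential on the sphere. Recall that $M=\Ek$ is a Riemannian submersion over a $2$-dimensional space form, with fibers tangent to a unit Killing vector field $\xi$. Fix an isothermal parameter $z$ on (a dense open subset of) $\Sigma$, write the first and second fundamental forms in the standard way, denote by $p$ the $(2,0)$-coefficient of the second fundamental form of $\Sigma$, and let $\nu=\langle N,\xi\rangle$ be the angle function.

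First I would introduce the Abresch--Rosenberg quadratic differential of $\Sigma$,
\[
Q\,dz^{2} \;=\; \Bigl(\, 2(H+i\tau)\, p \;-\; (\kappa-4\tau^{2})\, \langle \xi,\partial_{z}\rangle^{2}\Bigr)\, dz^{2},
\]
and verify that it is globally well defined and that $Q_{\bar z}\equiv 0$ whenever $H$ is constant. The derivative $p_{\bar z}$ is expanded via the Codazzi equation of $\Sigma\hookrightarrow M$; the resulting ambient curvature term involves the Riemann tensor of $\Ek$, which can be written explicitly in terms of $\kappa$, $\tau$ and $\nu$. The derivative $\partial_{\bar z}\langle \xi,\partial_{z}\rangle^{2}$ is computed using the Killing equation for $\xi$ together with the decomposition $\xi=\xi^{T}+\nu N$. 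A direct (if somewhat technical) cancellation shows that the two contributions balance precisely when $dH=0$, so that $Q\,dz^{2}$ becomes a holomorphic quadratic differential on $\Sigma$.

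Since $\Sigma$ has genus zero, it admits no nonzero holomorphic quadratic differential, whence $Q\equiv 0$. This yields a pointwise algebraic identity that expresses the complex Hopf coefficient $p$ in terms of $\langle\xi,\partial_{z}\rangle$ and the constants $H,\kappa,\tau$.

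The main obstacle is to deduce rotational symmetry from $Q\equiv 0$. Away from the zero set $Z=\{\xi^{T}=0\}\subset\Sigma$, the identity $Q=0$ pins down the principal directions of $\Sigma$ as the eigendirections of $\xi^{T}$ and its perpendicular, and the structure equations collapse to an autonomous first-order ODE system for the pair $(\nu,|\xi^{T}|)$ along the integral curves of $\xi^{T}/|\xi^{T}|$, depending only on $H,\kappa,\tau$. The same ODE system governs the profile curve of a rotational CMC surface in $\Ek$ with the same mean curvature. Since $\Sigma\cong\S^{2}$ and $\xi$ is nowhere vanishing on $M$, an index-type count forces $Z$ to consist of exactly two points, and a local analysis there (using the Hartman--Wintner description of isolated zeros of holomorphic differentials, which also takes care of the umbilic set of $\Sigma$) shows that $\Sigma$ meets a vertical axis orthogonally at each point of $Z$. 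Integrating the ODE from this initial condition realizes $\Sigma$ meridian by meridian as the unique rotational CMC sphere in $\Ek$ of the prescribed mean curvature, completing the argument.
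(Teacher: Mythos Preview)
Your outline is essentially the original Abresch--Rosenberg argument from \cite{AR1,AR2}: build the quadratic differential $Q\,dz^2$, verify its holomorphicity from Codazzi and the structure equations of $\Ek$, kill it on $\S^2$ by Riemann--Roch, and then read off rotational symmetry from the resulting pointwise identity. This is correct as a strategy, and your sketch of the final step (reducing to the rotational ODE along integral curves of $\xi^T$, with the two zeros of $\xi^T$ forced by Poincar\'e--Hopf) is exactly how one closes the argument.

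The paper, however, does not reprove Theorem~\ref{arth} this way. It recovers the CMC case as the special instance $\Phi\equiv H$ of Theorem~\ref{bomi} (see Corollary~\ref{prh} and the paragraph after it), which in turn rests on Theorem~\ref{mainth}. The mechanism is completely different: there is no holomorphic differential. Instead, one studies the canonical rotational example $S$ of the class, shows its angle function is strictly monotone (Lemma~\ref{monan}), proves $S$ has bounded second fundamental form so that Proposition~\ref{posrot} forces $S$ to be a sphere (or an entire/asymptotically cylindrical graph), and then uses Lemma~\ref{gaussmap} to conclude that the ambient translates of $S$ form a \emph{transitive family} in the sense of Definition~\ref{defitransi}. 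The uniqueness of immersed spheres then comes from the general index-theoretic result of \cite{GM3} (Theorem~2.4 there), not from the vanishing of a holomorphic object.

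What each approach buys: yours is shorter and self-contained for CMC, but it relies on the miraculous cancellation that makes $Q$ holomorphic, which is specific to the CMC equation in $\Ek$. The paper's route is heavier to set up but applies uniformly to any elliptic Weingarten relation $H=\Phi(H^2-K_e,\nu^2)$, where no holomorphic differential is available; the CMC theorem then falls out as one line.
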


Motivated by these results, the following is an important, well-known, open problem in the theory of surfaces in homogeneous manifolds: 

\vspace{0.2cm}

{\bf Problem (P):} \emph{Let $\Sigma$ be an elliptic Weingarten sphere immersed in $M=\Ek$. Is then $\Sigma$ a rotational sphere?}

\vspace{0.2cm}

In this paper we develop a systematic approach to study Problem (P). Our main general result, Theorem \ref{mainth}, provides a simple sufficient condition for the rotational symmetry of immersed spheres in $\Ek$ that satisfy very general curvature equations, and in particular, the elliptic Weingarten equation \eqref{speci1}. We next give some consequences of this Theorem \ref{mainth} regarding Problem (P). The first one, which follows directly from Theorem \ref{mainth}, is:

\begin{theorem}\label{tint0}
Assume that there exists a rotational sphere $S$ in $M=\Ek$ that satisfies the elliptic Weingarten equation \eqref{speci1}. Then, any immersed sphere in $M$ that satisfies \eqref{speci1} is equal to $S$, up to ambient isometry.

More generally: let $S$ denote the unique (up to ambient isometry) inextendible rotational surface in $M$ that touches its rotation axis orthogonally, and satisfies \eqref{speci1}. If $S$ has bounded second fundamental form, then any immersed sphere in $M$ that satisfies \eqref{speci1} is a rotational sphere.
\end{theorem}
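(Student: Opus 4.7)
My plan is to deduce both statements of Theorem \ref{tint0} directly from Theorem \ref{mainth} by specializing the general curvature relation $\Phi(H,K_e,K)=0$ to the elliptic Weingarten setting. Setting
\[
\Phi(H,K_e,K) := H - \phi(H^2-K_e),
\]
the function $\Phi$ is smooth, independent of $K$, and the ellipticity condition $4t(\phi'(t))^2<1$ in \eqref{speci1} precisely matches the notion of ellipticity used in Theorem \ref{mainth}. Thus \eqref{speci1} is literally a particular case of the elliptic relations treated there, and hypotheses phrased on $\Phi$ translate verbatim into hypotheses on $\phi$.

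For the second (more general) statement, I would first check that the rotational surface $S$ in the hypothesis is well-defined. The rotational ansatz turns \eqref{speci1} into a second-order ODE for the profile curve which, by ellipticity, can be put into normal form; together with the initial condition of orthogonal contact with the axis this yields, by standard ODE theory, a unique (up to the choice of vertical axis, hence up to ambient isometry) inextendible rotational solution $S$ in $M=\Ek$. Assuming $|A_S|$ is bounded, Theorem \ref{mainth} then yields at once that every immersed genus-zero solution $\Sigma$ of \eqref{speci1} is a sphere of revolution.

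For the first statement, I would verify that the mere existence of a rotational sphere satisfying \eqref{speci1} already forces the hypothesis of the second. Any such sphere meets its axis of revolution at exactly two antipodal poles, and by regularity at those poles it does so orthogonally; by ODE uniqueness it must therefore coincide with the inextendible rotational solution starting orthogonally from the axis. Being compact, this surface has bounded second fundamental form, so the second statement applies and every immersed sphere satisfying \eqref{speci1} is rotational. Uniqueness up to ambient isometry then follows from one more application of ODE uniqueness for the profile curve, once the rotational symmetry of $\Sigma$ has been established and its axis has been placed by an isometry of $M$.

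The reduction performed above is essentially formal and presents no substantive obstacle. The heart of the matter, which would have to be addressed separately in proving Theorem \ref{mainth}, is to show that ellipticity together with boundedness of $|A_S|$ is strong enough to force the rotational symmetry of every immersed genus-zero solution; this is where the real difficulty lies, presumably through a Hopf-type holomorphic differential combined with a continuity/deformation argument anchored at the rotational model $S$.
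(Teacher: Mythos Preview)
Your reduction of Theorem \ref{tint0} to Theorem \ref{mainth} is correct and is exactly the paper's approach: the paper states explicitly that Theorem \ref{tint0} is an immediate consequence of item (1) of Theorem \ref{mainth}, since the elliptic Weingarten equation \eqref{speci1} is the special case of the general Weingarten equation \eqref{gwi}--\eqref{gwi2} where $\Phi(t,v)=\phi(t)$ does not depend on $v$.

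Two small remarks. First, your sentence ``by ellipticity, the ODE can be put into normal form; together with the initial condition of orthogonal contact with the axis this yields, by standard ODE theory, a unique inextendible rotational solution'' glosses over a genuine difficulty: the ODE \eqref{edoful} is \emph{singular} at $\rho=0$, so existence near the axis does not follow from standard ODE theory. The paper handles this in Lemma \ref{empezar} via a Dirichlet problem and the continuity method; only away from the axis can one write the equation in normal form \eqref{edonor1}. Second, your closing guess about the mechanism behind Theorem \ref{mainth} (``a Hopf-type holomorphic differential'') is not what the paper does: the argument is based on the Poincar\'e--Hopf index theorem together with the construction of a \emph{transitive family} of surfaces (Definition \ref{defitransi}, Lemma \ref{gaussmap}) and the authors' earlier result from \cite{GM3}, rather than on any holomorphic quadratic differential.
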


The inextendible rotational surface $S$ will be proved to exist in Section \ref{sec:app}. It may or may not be a sphere. When the ambient space $M$ is a space form, $S$ is a totally umbilical surface, and thus has bounded second fundamental form. However, when $M=\Ek$, the surface $S$ is not totally umbilical in general, and for some Weingarten functionals \eqref{speci1} it is non-complete and its second fundamental form blows up. Theorem \ref{tint0} solves Problem (P) except for those cases. Some specific consequences of Theorem \ref{tint0} are:

\begin{theorem}\label{tint1}
Any elliptic Weingarten sphere immersed in $\H^2\times\R$ is an embedded rotational sphere.
\end{theorem}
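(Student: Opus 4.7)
The plan is to deduce Theorem~\ref{tint1} directly from Theorem~\ref{tint0}: it suffices to verify that, for $M=\H^2\times\R$ and any elliptic Weingarten functional $\phi$ satisfying \eqref{speci1}, the unique inextendible rotational surface $S\subset M$ that satisfies \eqref{speci1} and meets the rotation axis orthogonally has uniformly bounded second fundamental form. Once this is established, Theorem~\ref{tint0} yields rotational symmetry immediately, and embeddedness will follow from the shape of the profile curve of $S$.

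The first step will be to set up an ODE for the profile. A rotational surface in $\H^2\times\R$ is obtained by revolving a unit-speed curve $(\rho(s),h(s))$ in the flat vertical half-plane $\{\rho\geq 0\}$ around the axis $\{\rho=0\}$; the principal curvatures are
\[
\kappa_1 \;=\; \dot\rho\,\ddot h - \dot h\,\ddot\rho,
\qquad
\kappa_2 \;=\; \coth(\rho)\,\dot h,
\]
and \eqref{speci1} becomes a second order ODE in $(\rho,h)$ whose non-degeneracy is guaranteed by the ellipticity condition $4t(\phi'(t))^2<1$. The initial conditions $\rho(0)=0$, $\dot\rho(0)=1$, $\dot h(0)=0$ encode the orthogonal contact with the axis, and smoothness of the surface at the axis forces $\kappa_1(0)=\kappa_2(0)$. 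Since $|\dot h|\le 1$ and, by the axis regularity, $\dot h=O(\rho)$ near $\rho=0$, the principal curvature $\kappa_2$ is automatically bounded on the whole maximal interval of existence.

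The main obstacle will be to bound $\kappa_1$. Two features of the $\H^2\times\R$ geometry should make this possible: the profile lives in a \emph{flat} half-plane with no finite-distance singular boundary other than the axis (unlike in $\S^2(\kappa)\times\R$, where the base has antipodal points that can create trouble), and the ellipticity condition forces $\phi(t)=O(\sqrt t)$ as $t\to\8$, so that $|\kappa_1+\kappa_2|$ cannot dominate $|\kappa_1-\kappa_2|$ in the Weingarten relation. Combining these, a continuation argument on the ODE should show that the system cannot develop a finite-arclength blow-up while $\rho>0$. The remaining alternatives are then that $(\rho,h)$ escapes to $\rho=\infty$ with bounded curvatures, producing a complete noncompact $S$ with bounded second fundamental form, or that $\rho$ returns to zero at some $s_0>0$, where the same axis regularity analysis shows that $S$ closes up smoothly into a rotational sphere with bounded curvatures. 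In either case $S$ has bounded second fundamental form, so Theorem~\ref{tint0} applies and every elliptic Weingarten sphere in $\H^2\times\R$ is a rotational sphere. Embeddedness then follows because the ODE analysis gives that the profile of such a rotational sphere is an embedded arc in the half-plane joining two axis points, with $\rho$ first increasing and then decreasing.
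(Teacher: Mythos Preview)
Your overall architecture is exactly that of the paper: reduce Theorem~\ref{tint1} to Theorem~\ref{tint0} by showing that the canonical rotational example $S$ in $\H^2\times\R$ has bounded second fundamental form, and then read off embeddedness from the shape of the profile (which the paper packages into item~(3) of Theorem~\ref{mainth}). Your computation of $\kappa_1,\kappa_2$ and the observation that $\kappa_2=\dot h\,\coth\rho$ is automatically bounded are also in agreement with the paper.

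The gap is the step where you bound $\kappa_1$. The claim that ellipticity forces $\phi(t)=O(\sqrt t)$ is correct, but it does not do what you want. From $H=\phi(H^2-K_e)$ and $|\phi(t)|\le |\phi(0)|+\sqrt t$ one only gets
\[
|\kappa_1+\kappa_2|\le 2|\phi(0)|+|\kappa_1-\kappa_2|,
\]
which, with $\kappa_2$ bounded, is perfectly consistent with $\kappa_1\to\infty$. For instance, $\phi(t)=\sqrt{t+c}$ satisfies the ellipticity inequality and encodes $K_e=c>0$; there, $\kappa_2\to 0^+$ forces $\kappa_1\to\infty$, so the growth bound on $\phi$ alone cannot rule out finite-arclength blow-up. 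The vague ``continuation argument on the ODE'' is precisely where the real content lies.

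What the paper actually does at this point (Proposition~\ref{boundh2}) is substantially more specific. First, it uses the monotonicity of the angle function (Lemma~\ref{monan}) to get $h'>0$ and $h''\ge 0$, hence $\kappa_1\ge 0$ and $0<\kappa_2\le C$. Writing the Weingarten relation as $\kappa_1=f(\kappa_2)$ with $f$ defined on $(a,b)$, the easy case $a<0$ (or $a=-\infty$) follows because the arc of $\{k_1=f(k_2)\}$ inside the first quadrant is then compact. The hard case $a\ge 0$ requires a genuine phase-plane barrier: the orbit $(x,y)=(r,r')$ can only blow up by approaching the curve $\Gamma=\{\sqrt{1-y^2}\coth x=a\}$, and the paper shows, by comparing slopes at a first intersection, that the orbit cannot cross the nearby curve $\Gamma_0=\{\sqrt{1-y^2}\coth x=\alpha_0\}$ for any $\alpha_0\in(a,\alpha)$, where $\alpha=f(\alpha)$ is the umbilicity constant. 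That barrier argument is the missing idea in your sketch; without it, the reduction to Theorem~\ref{tint0} is incomplete.
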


\begin{theorem}\label{tint2}
For every $c>0$ there exists a rotational sphere $S_c$ immersed in $M=\Ek$ with constant extrinsic curvature  $K_e=c$, and any other immersed sphere in $M$ with $K_e=c$ is equal to $S_c$, up to ambient isometry. Moreover, $S_c$ is embedded if $M$ is not compact. If $M$ is compact, $S_c$ maybe be non-embedded, see Figure \ref{figesf}.
\end{theorem}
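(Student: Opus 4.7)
The plan is to apply Theorem \ref{tint0} once the rotational sphere $S_c$ has been produced by hand. First, rewrite $K_e = c > 0$ as the Weingarten equation $H = \phi(H^2 - K_e)$ with $\phi(t) = \sqrt{t+c}$ (orienting by the mean curvature vector); since $4t(\phi'(t))^2 = t/(t+c) < 1$ for all $t \geq 0$, the equation is elliptic in the sense of \eqref{speci1}. Consequently, as soon as a rotational sphere with $K_e = c$ is exhibited, the first part of Theorem \ref{tint0} immediately gives the uniqueness up to ambient isometry.

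To construct $S_c$, I would parameterize a rotationally invariant surface in $\Ek$ by its profile curve $\gamma(s) = (r(s), h(s))$, where $r$ is the distance to the rotation axis (in the base $\M^2(\kappa)$ of the canonical Riemannian submersion $\Ek \to \M^2(\kappa)$) and $h$ is the fiber coordinate. Expressing $H$ and $K_e$ in terms of the angle function of $\gamma$ and of the horizontal/vertical components of $\gamma'$, the condition $K_e = c$ becomes a second order ODE system for $\gamma$ whose unique maximal solution with $r(0) = 0$ and $\gamma'(0)$ horizontal is the inextendible rotational surface $S$ of Theorem \ref{tint0}. Because $c > 0$, an energy/convexity first integral for this ODE should force $r$ to attain a maximum and return to zero at some finite $s_1 > 0$. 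A reflection symmetry about the turning point (available since $\Ek$ admits an isometric reflection across any horizontal slice) then forces $\gamma'(s_1)$ to be horizontal as well, so $\gamma$ meets the axis orthogonally at both endpoints and the closed profile curve generates a smooth immersed rotational sphere $S_c$ with $K_e = c$. Compactness of $S_c$ makes its second fundamental form bounded, so Theorem \ref{tint0} applies and delivers existence and uniqueness of $S_c$ up to ambient isometry.

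For the embeddedness statements I would analyze $\gamma$ directly. When $M$ is noncompact, the rotation axis is an embedded complete geodesic and $\gamma$ is a simple convex arc in a halfplane, so the rotated surface $S_c$ is embedded. When $M$ is compact, i.e.\ a Berger sphere with $\kappa > 0$ and $\tau \neq 0$, the rotation axis is a closed geodesic of finite length, and the vertical drift $\int_0^{s_1} h'(s)\,ds$ may exceed this length, making $\gamma$ wrap around the axis one or more times and producing an immersed but non-embedded $S_c$ as in Figure \ref{figesf}. The phenomenon can be exhibited by choosing a Berger sphere with a suitable ratio of $\kappa$ to $\tau$ and applying the intermediate value theorem to the drift as $c$ varies.

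The main obstacle is the ODE step: in $\Ek$ the profile-curve equation is nonautonomous, so the straightforward $\R^3$ first integral is unavailable, and one must identify a conserved or monotone quantity specific to the $\Ek$ metric that simultaneously forces $r$ to return to zero in finite arc length and forces orthogonality at the return. The positivity $c > 0$ is what drives both, but translating this sign into a usable estimate for the fibered geometry of $\Ek$ is the delicate point.
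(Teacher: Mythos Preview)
Your overall strategy matches the paper's: show that the canonical rotational example for $K_e=c$ is a sphere with bounded second fundamental form, then invoke Theorem~\ref{tint0} (equivalently Theorem~\ref{mainth}). The reduction to the elliptic Weingarten form via $\phi(t)=\sqrt{t+c}$ is correct. However, two points need correction or sharpening.

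First, the symmetry you invoke is wrong when $\tau\neq 0$: the spaces $\Ek$ with $\tau\neq 0$ do \emph{not} admit isometric reflections across horizontal slices (check the cross term $\tau\lambda(x_2\,dx_1-x_1\,dx_2)$ in the metric \eqref{eq:metric}). The relevant isometry is the $180^\circ$-rotation around a horizontal geodesic, $(x_1,x_2,x_3)\mapsto(x_1,-x_2,-x_3)$, and the paper uses precisely this (via the symmetry $\cG(x,y)=\cG(x,-y)$ of the ODE in \eqref{edof}) to glue the two halves of the sphere in Section~\ref{sub:ext}. This is fixable, but as stated your argument fails outside the product cases.

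Second, and more importantly, you flag the ODE step as the obstacle but do not resolve it; the paper does, and the mechanism is worth knowing. With the arc-length parameter $s$ of the auxiliary metric $d\sigma^2=(1+\tau^2\rho^2)\,d\rho^2+\tfrac{(4+\kappa\rho^2)^2}{16}\,dh^2$ on the $(\rho,h)$-halfplane, the profile equation for $\rho$ alone becomes the \emph{autonomous} second-order ODE \eqref{ress4}, equivalently the planar system \eqref{ress5} in $(x,y)=(\rho,\rho')=(\rho,\nu)$. This system is separable and integrates explicitly to the closed-form first integral \eqref{ress7}, namely
\[
y(x)^2 \;=\; 1+\frac{2(c+\tau^2)}{\kappa-4\tau^2}\,\log\!\left(\frac{4-(\kappa-8\tau^2)x^2}{4+\kappa x^2}\right),
\]
which one checks is strictly decreasing in $x$ and vanishes at some finite $x_0>0$ for every $(\kappa,\tau)$ with $\kappa\neq 4\tau^2$. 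This simultaneously shows that $\nu$ reaches $0$ (so $S$ is not an entire graph), that $S$ cannot limit on a vertical cylinder (those have $K_e=-\tau^2$), and---after a short check that $y(x)^2<1/(1+\tau^2 x^2)$---that the mean curvature in \eqref{resH} stays bounded, hence $|\sigma|$ is bounded. The paper then closes the sphere not by arguing directly that $r$ returns to $0$, but by feeding these three facts into the trichotomy of Proposition~\ref{posrot}. So the ``conserved quantity specific to the $\Ek$ metric'' you were looking for is exactly \eqref{ress7}, and it comes from choosing the right arc-length parameter rather than the naive one.
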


\begin{theorem}\label{tint3}
Let $\Sigma$ be an immersed sphere in $M=\Ek$ that satisfies an elliptic Weingarten equation \eqref{speci1}, where $\phi\in C^{\8}([0,\8))$ verifies $a<\phi<b$ for positive constants $a,b$.

Then, $\Sigma$ is a rotational sphere in $M$. $\Sigma$ is embedded if $M$ is not compact.

\end{theorem}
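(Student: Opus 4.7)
By Theorem \ref{tint0}, it suffices to prove that the inextendible rotational surface $S\subset M$ that meets its rotation axis orthogonally and satisfies \eqref{speci1} has uniformly bounded second fundamental form. Along $S$ one automatically has $a\le H\le b$, since $H$ equals $\phi$ evaluated at the non-negative quantity $H^2-K_e=(\kappa_m-\kappa_p)^2/4$, where $\kappa_m,\kappa_p$ denote the principal curvatures of $S$ along the meridian and the parallel directions.

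\textbf{Setup and trapping.} I would write $S$ as the orbit of a profile curve $\gamma(s)=(\rho(s),h(s))$ in a totally geodesic vertical half-plane of $\Ek$, parametrized by arc length, with $\alpha(s)$ the angle of $\gamma'$ with the horizontal. The principal curvatures $\kappa_m,\kappa_p$ may be written explicitly in terms of $\rho,\alpha,\alpha'$ and of the ambient invariants $\kappa,\tau$; in particular $\kappa_p$ depends only on $(\rho,\alpha)$ while $\kappa_m$ is $\alpha'$ plus ambient lower-order terms. Substituting into \eqref{speci1} produces a first-order implicit ODE in $\alpha$ that, by the ellipticity condition on $\phi$, is solvable for $\alpha'$ whenever $\rho>0$. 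The plan is then to combine the bounds $a\le H\le b$ with a comparison against a rotational CMC reference surface (for instance, the rotational CMC-$a$ surface through the same axis point) to trap $\gamma$ inside a compact region of the half-plane. Inside such a region, $\kappa_p$ is uniformly bounded by a direct geometric computation, and since $\kappa_m+\kappa_p=2H\le 2b$, so is $\kappa_m$; blow-up of $|A|$ at the axis itself is ruled out because $S$ is smooth and umbilic there by rotational symmetry. This would yield the required uniform bound on $|A|$ and, via Theorem \ref{tint0}, the conclusion that $\Sigma$ is a rotational sphere.

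\textbf{Embeddedness and main obstacle.} If $M$ is non-compact, the profile of $S$ is a simple arc joining two points of the axis (possibly the same one), so its rotation is embedded and hence so is $\Sigma$. The step I expect to be hardest is the trapping in the previous paragraph: showing that the bounds $a<\phi<b$ alone are enough to confine $\gamma$ to a compact region of the half-plane in every model $\Ek$ requires a careful model-by-model barrier construction. In particular one has to handle the regimes where the natural CMC-$a$ reference surface is itself non-compact (e.g.\ small $a$ in $\H^2\times\R$), in which case the comparison has to be made against a more refined barrier, perhaps a Weingarten rotational surface obtained by modifying $\phi$ to be constant equal to some value in $(a,b)$ above a suitable threshold.
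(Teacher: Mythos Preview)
Your reduction to bounding the second fundamental form of the canonical rotational example $S$, and your idea to use a CMC-$a$ rotational barrier, are exactly the paper's strategy. Once the profile is trapped in a compact region of the half-plane (so $\rho$ is bounded and, when $\kappa<0$, $4+\kappa\rho^2$ is bounded away from zero), your argument that $\kappa_p$ is bounded and hence $\kappa_m=2H-\kappa_p$ is bounded is perfectly fine; the paper does this step slightly differently, reading off from the explicit formulas \eqref{resH} and \eqref{res6} that $H$ bounded forces $\rho''(s)$ bounded, which in turn forces $K_e$ bounded.

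The genuine gap is the one you yourself flag: the trapping step fails when the CMC-$a$ rotational example is non-compact, which for $\kappa\le 0$ happens for all small $a$. You propose building a refined barrier, but this is unnecessary. The paper's observation is that the barrier is only needed when $\kappa>0$, and in that regime a rotational CMC sphere exists for \emph{every} $a>0$, so the comparison always succeeds. When $\kappa\le 0$, one does not need any barrier at all: either $L<L_\infty$ (so $\rho$ is automatically bounded and one proceeds as above), or $L=L_\infty$ and $S$ is an entire graph in $\Rk\cong M$. In the entire-graph case there are no compact surfaces whatsoever in the class $\cW$ by the maximum principle (translate $S$ vertically to touch any compact surface from above), so the statement about immersed spheres $\Sigma$ is vacuous. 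This dichotomy replaces your missing barrier construction entirely.

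One minor point on embeddedness: rather than arguing directly about the profile, the paper obtains embeddedness (for $M$ non-compact) as part of the structural conclusions of Theorem~\ref{mainth}, which analyzes the canonical example in detail and shows it is an embedded symmetric bi-graph.
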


For the proof of these theorems, see Theorems \ref{clash2}, \ref{posex} and \ref{bomi}, respectively.

Theorem \ref{tint1} provides a solution to the classification problem of elliptic Weingarten spheres, as well as a positive answer to Problem (P), when the ambient space is $\H^2\times \R$. Theorem \ref{tint2} classifies all immersed spheres in $M=\Ek$ with constant positive extrinsic curvature, thus solving a well-known open problem of the theory. For the case where $M$ is $\H^2\times \R$ or $\S^2\times \R$, Theorem \ref{tint2} was proved in \cite{EGR}. Theorem \ref{tint3} contains, as a very particular situation, the Abresch-Rosenberg classification of CMC spheres.

\begin{figure}
\begin{center}
\includegraphics[height=5.5cm]{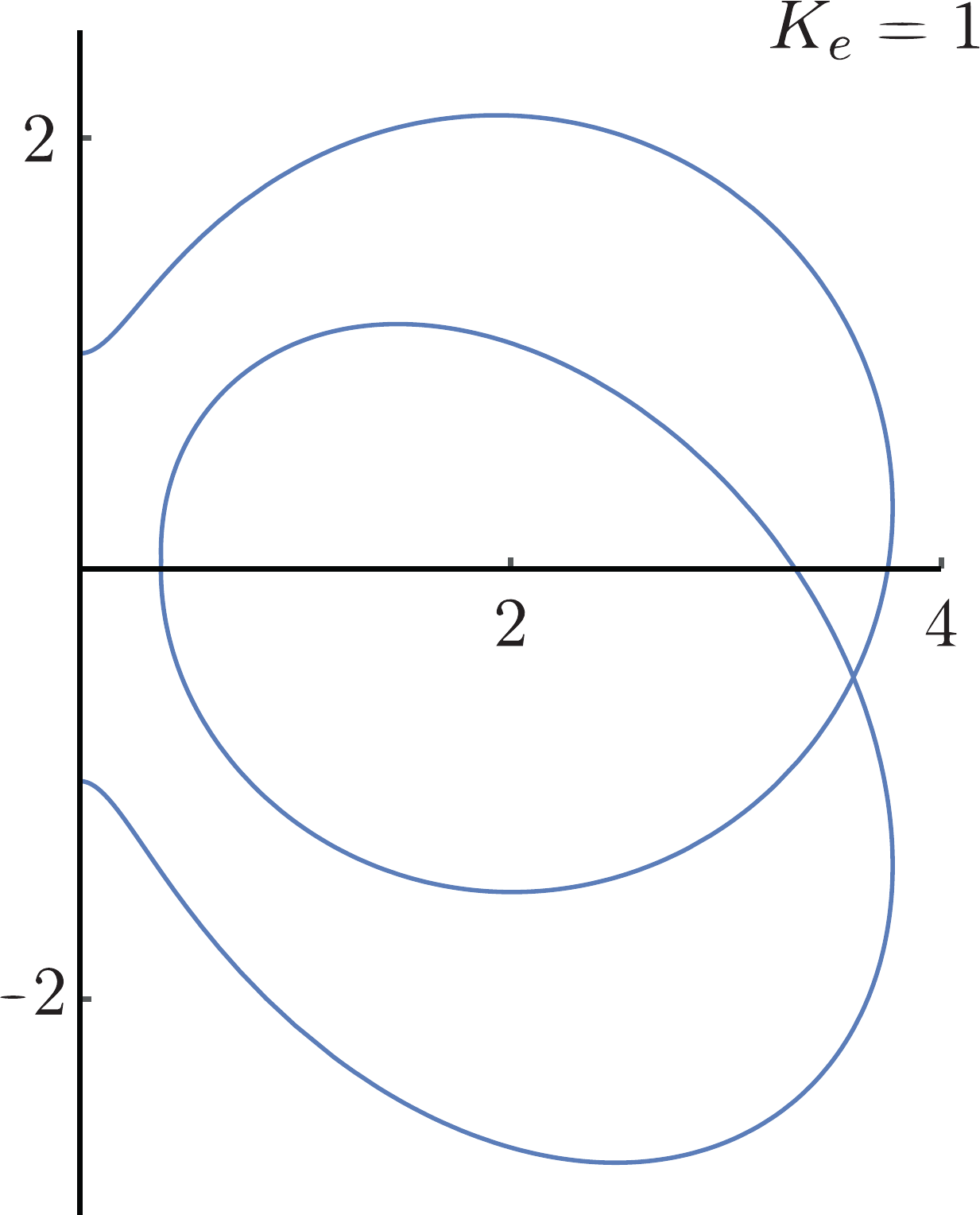}  \hspace{1cm}
\includegraphics[height=5.5cm]{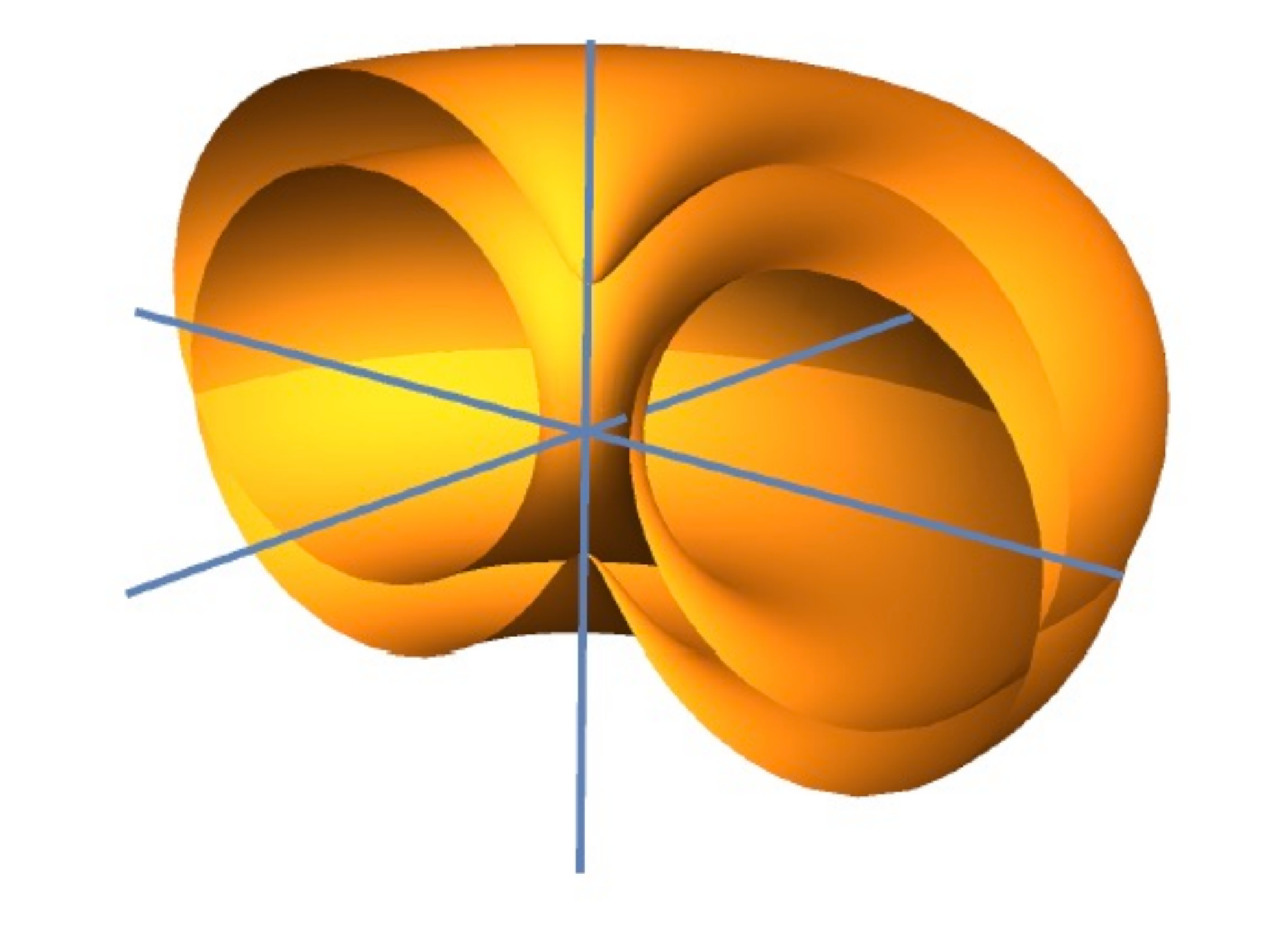}
\caption{Left: profile curve of a rotational, non-embedded, sphere with $K_e=1$ when $M=\Ek$ is a certain Berger sphere, after stereographic projection $M\setminus\{p\}\flecha \R^3$. Right: Picture of one half of this rotational sphere.}\label{figesf}
\end{center}
\end{figure}

We will next explain in detail our main general uniqueness result, valid for elliptic curvature equations more general than \eqref{speci1}. Let $M=\Ek$ be a simply connected homogeneous three-manifold with a $4$-dimensional isometry group, and let $\xi$ denote the vertical unit Killing vector field on $M$ associated to the canonical fibration $\pi:M\flecha \M^2(\kappa)$ of $M$ onto the two-dimensional space form of constant curvature $\kappa$. We will say that an immersed oriented surface $\Sigma$ in $M$ is a \emph{general Weingarten surface} if its mean, extrinsic and Gaussian curvatures $H,K_e,K$ satisfy a smooth elliptic relation $$\cF(H,K_e,K)=0, \hspace{1cm} \cF\in C^{\8} (\R^3).$$ This equation can be rewritten in a more adequate way (see Section \ref{sec:wde}) as 
 \begin{equation}\label{gwi}
 H=\Phi(H^2-K_e, \nu^2),
 \end{equation}
where $\nu:=\esiz \eta,\xi\esde$ is the \emph{angle function} of $\Sigma$ (here, $\eta$ is the unit normal of $\Sigma$ in $M$), and $\Phi=\Phi(t,v) \in C^{\8}([0,\8)\times [0,1])$ satisfies the ellipticity condition 
 \begin{equation}\label{gwi2}
 4 t \left( \frac{\parc \Phi}{\parc t} (t,v)\right)^2 <1 \hspace{1cm} \forall (t,v).
 \end{equation}
When $\Phi(t,v)$ only depends on $t$, we recover the equation \eqref{speci1} of elliptic Weingarten surfaces. The class of all immersed surfaces in $M$ that satisfy a specific general Weingarten equation \eqref{gwi}-\eqref{gwi2} is called a \emph{general Weingarten class}, and denoted by $\cW$.

%
%
%
%
%

As already mentioned for the case of elliptic Weingarten surfaces, if $\cW$ is a general Weingarten class in $M$, one can show (see Section \ref{sec:app})  that there exists an inextendible rotational surface $S$ of $\cW$, unique up to ambient isometry, that touches its rotation axis orthogonally at some point. We call $S$ the \emph{canonical rotational example} of the class $\cW$.



In these conditions, we will prove:

\begin{theorem}\label{mainth}
Let $\cW$ be a class of general Weingarten surfaces in $M=\Ek$, and let $S$ be the canonical rotational example of $\cW$. Assume that $S$ has bounded second fundamental form.

Let $\Sigma$ be an immersed sphere of the class $\cW$. Then:

 \begin{enumerate}
 \item
$\Sigma$ is congruent to $S$; in particular, $\Sigma$ is a sphere of revolution in $M$.
 \item
If $S$ is not compact, $\Sigma$ does not exist, i.e. there are no immersed spheres in the class $\cW$.
 \item
If $M$ is not compact, then $\Sigma$ is embedded. 
 \item
$\Sigma$ is a bigraph, unless it is a totally geodesic slice $\S^2(\kappa)\times \{t_0\}$ in $M=\S^2(\kappa)\times \R$.
 \item
$\Sigma$ is invariant with respect to the $180º$-rotation of $M$ around some geodesic orthogonal to its rotation axis.
 \item
The angle function of $\Sigma$ is a strictly monotonic function of any regular parameter of its profile curve, unless $\Sigma$ is a totally geodesic slice $\S^2(\kappa)\times \{t_0\}$ in $M=\S^2(\kappa)\times \R$.
 \end{enumerate}

\end{theorem}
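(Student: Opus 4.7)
The plan is to reduce the whole theorem to the single uniqueness statement of item (1)—that $\Sigma$ is congruent to $S$—and then read off the remaining items (2)-(6) from the intrinsic structure of $S$. The uniqueness will be proved by combining the tangency principle for the quasilinear elliptic PDE associated to (\ref{gwi})-(\ref{gwi2}) with a continuous comparison of $\Sigma$ against all ambient isometric copies of $S$.

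First I would verify that when a surface in $\cW$ is written as a local graph $u$ over its tangent plane at some point, the equation (\ref{gwi}) becomes a quasilinear second-order PDE whose linearization is elliptic precisely by (\ref{gwi2}); the vertical Killing direction $\xi$ enters only through lower-order terms, since $\nu$ depends on $Du$. Hence the Hopf maximum principle applies in its strong form: two oriented surfaces in $\cW$ that touch at an interior point with matching unit normals and with one lying locally on one side of the other must coincide in a neighborhood of the contact point, and hence on their whole connected intersection by unique continuation for solutions of elliptic equations.

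Next, consider the orbit $\cO=\{g\cdot S:g\in\mathrm{Isom}(M)\}$ of $S$ under ambient isometries. Since $S$ has bounded second fundamental form and is rotational, $\cO$ is a family of immersed $\cW$-surfaces that varies continuously in $g$ with uniform geometric control. The strategy is to start with an $S_0\in\cO$ disjoint from the compact sphere $\Sigma$ and continuously deform $S_0$ through $\cO$ until it first touches $\Sigma$. At the first tangency the ambient normals match and one surface lies locally on one side of the other; the tangency principle of the previous step then forces $\Sigma$ to coincide with this copy of $S$, proving item (1). Item (2) is then immediate, since a noncompact $S$ cannot be congruent to the compact $\Sigma$. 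The remaining items (3)-(6) are intrinsic properties of $S$: embeddedness of rotational spheres in non-compact $\Ek$ via the profile-curve ODE, the bigraph structure (with the slice $\S^2(\kappa)\times\{t_0\}$ as the sole degenerate case), invariance under the $180^\circ$-rotation of $M$ around a horizontal geodesic that meets the axis orthogonally, and strict monotonicity of the angle function along the profile, all of which reduce to the ODE analysis carried out for $S$ in Section~\ref{sec:app}.

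The main obstacle is the moving-family argument in the previous paragraph, where three delicate points have to be addressed. (i)~Producing an initial disjoint copy $S_0\in\cO$: straightforward when $M$ is noncompact (translate $S$ far away), but requiring care when $M$ is compact (e.g.\ a Berger sphere) or when $S$ itself is noncompact, where limits have to be taken inside $\cO$ and the orbit may need to be suitably compactified. (ii)~Ensuring that the first contact occurs at a point where the tangency principle applies, namely a regular interior point of both surfaces; the axial points of $S$ (where it meets its rotation axis orthogonally) are the potential trouble spots, handled precisely because of the bounded second fundamental form hypothesis, which prevents curvature blow-up and keeps $S$ orthogonal to the axis in a controlled way. (iii)~Verifying that the orbit $\cO$ is flexible enough to eventually realize a tangency with $\Sigma$, rather than letting $S_0$ slip past it; this follows from a connectedness argument using the identity component of $\mathrm{Isom}(M)$ and the compactness of $\Sigma$.
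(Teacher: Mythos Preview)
Your approach has a genuine gap at the heart of the argument. The moving-family method you propose is essentially an Alexandrov-type first-contact argument, and such arguments require \emph{embeddedness} to make sense of ``disjoint'', ``first touches'', and ``one surface lies locally on one side of the other''. But the theorem is about \emph{immersed} spheres $\Sigma$, and in fact the paper points out that even the canonical example $S$ can be non-embedded when $M$ is a Berger sphere. If $\Sigma$ is immersed with self-intersections, the statement ``start with $S_0\in\cO$ disjoint from $\Sigma$'' already has no clean meaning; and even if one interprets it set-theoretically, at a first contact point $p$ there may be several local sheets of $\Sigma$ (and of $S_{t_0}$) passing through $p$, the normals need not match, and no sheet need lie on one side of another. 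The tangency principle simply does not fire. This is not a technicality you can patch with the bounded-second-fundamental-form hypothesis or with a compactification of $\cO$; it is the reason Hopf-type theorems for immersed spheres require a different mechanism than Alexandrov reflection.

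The paper's proof avoids this entirely. It does not compare $\Sigma$ with moving copies of $S$. Instead, it shows (via the monotonicity of the angle function of $S$, Lemma~\ref{monan}) that the orbit of $S$ under orientation- and $\xi$-preserving isometries, possibly augmented by a limit cylinder and a reflected copy, is a \emph{transitive family}: its Legendrian lifts foliate the unit tangent bundle $TU(M)$. Then the authors invoke their earlier result (Theorem~2.4 of \cite{GM3}), whose engine is Poincar\'e--Hopf: comparing $\Sigma$ pointwise with the unique member of the family tangent to it at each point produces a line field on $\Sigma$ whose singularities all have negative index unless $\Sigma$ coincides with one of the family members. On a sphere such a line field cannot exist, so $\Sigma$ must be a copy of $S$. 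This index-theoretic route is precisely what handles the immersed case; your first-contact argument would only recover the embedded case.
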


Let us make some brief comments regarding the statement of Theorem \ref{mainth}. 

The first assertion shows the rotational symmetry of \emph{any} general Weingarten sphere in $M$, under the assumption that $S$ has bounded second fundamental form. Theorem \ref{tint0} is an immediate consequence of this item. In particular, given a general Weingarten class $\cW$ for which $S$ has bounded second fundamental form, there exists at most one immersed sphere in $\cW$ up to ambient isometry. 

The second assertion covers, for instance, the cases $H=0$ in $\R^3$ or $H=1/2$ in $\H^2\times \R$; in these cases the canonical rotational examples are, respectively, planes and certain entire graphs, and no compact examples exist (by the maximum principle). It should be emphasized however that, in our general situation, the fact that the canonical example $S$ is not compact does not contradict the existence of compact immersed surfaces in the class $\cW$, since $S$ might not be an entire graph. In this sense, assertion (2) is non-trivial, and the fact that $\Sigma$ has genus zero is fundamental to it.

The third assertion proves that rotational spheres of a general Weingarten class in $M=\Ek$ are embedded whenever $M$ is not diffeomorphic to $\S^3$. This is not true without the topological hypothesis; for example, there exist non-embedded rotational CMC spheres for some $M=\Ek$ diffeomorphic to $\S^3$ (see \cite{To}). 

The \emph{bigraph} property in the fourth assertion means: the sphere $\Sigma$ is decomposed as $\Sigma=\Sigma_1\cup \Sigma_2$, where $\Sigma_1,\Sigma_2$ are compact, embedded rotational disks in $M$ with $\parc \Sigma_1=\parc \Sigma_2$, and so that, if $\pi:\Ek\flecha \mathbb{M}^2(\kappa)$ is the canonical fibration of $M=\Ek$, then there exists a domain $\mathcal{D}\subset \mathbb{M}^2(\kappa)$ such that for each $i=1,2$, $\pi({\rm int}(\Sigma_i))=\mathcal{D}$  and $\pi |_{{\rm int}(\Sigma_i)}$ defines a diffeomorphism between the interior of $\Sigma_i$ and $\mathcal{D}$. In the case that $M$ is diffeomorphic to $\S^3$, the interiors of $\Sigma_1$ and $\Sigma_2$ can intersect.

As regards the fifth assertion, let us observe that all these $180º$-rotations with respect to geodesics orthogonal to the rotation axis are isometries for every ambient manifold $M=\Ek$; the theorem shows that $\Sigma$ inherits such symmetries. Equivalently, in the decomposition $\Sigma=\Sigma_1\cup \Sigma_2$ of $\Sigma$ as a bigraph, one of these $180º$-rotations leaves $\parc \Sigma_1=\parc \Sigma_2$ invariant and sends $\Sigma_1$ to $\Sigma_2$ and vice versa.

The sixth assertion proves that $\Sigma$ is convex in a certain sense (it is not true in general that $\Sigma$ is convex in the sense that its extrinsic curvature is everywhere positive). Monotonicity of the angle function here means that two points of $\Sigma$ with the same angle function must lie in the same meridian curve of $\Sigma$. When $M$ is not diffeomorphic to $\S^2\times \R$ (and thus it can be given the structure of a Lie group with a left invariant metric, see \cite{mpe11}), this property is equivalent for rotational surfaces to the fact that the left invariant Gauss map of $\Sigma$, which takes values in $\S^2$, is injective. For a detailed discussion about surfaces in Lie groups and the regularity of their left invariant Gauss maps, see \cite{MMP}.

If we considered as ambient space $M$ the Euclidean flat $3$-space $\R^3$ or the $3$-sphere $\S^3(c)$ of constant curvature $c>0$, an analogous result to Theorem \ref{mainth} can be proved, with basically the same arguments. See Appendix 1. 

It is worth noting that the Abresch-Rosenberg classification of CMC spheres in $\Ek$-spaces (Theorem \ref{arth}) has been recently extended to any Riemannian homogeneous three-manifold. Specifically, in \cite{mmpr0,mmpr1} it is proved that two immersed spheres with the same constant mean curvature in \emph{any} homogeneous three-manifold $M$ always coincide up to an ambient isometry, and they inherit all the symmetries of the ambient space $M$. For the special case where $M$ is the three-dimensional Thurston geometry given by the Lie group ${\rm Sol}_3$, see \cite{DM,M}.

The proof of Theorem \ref{mainth} is based on the Poincaré-Hopf theorem. More specifically, if $\Sigma$ is an immersed sphere of a general Weingarten class $\cW$ in $\Ek$, and $\Sigma$ is not a rotational sphere, the idea is to construct a line field $\cL$ on $\Sigma$ with isolated zeros of negative index. As there are no such line fields on a topological sphere (by Poincaré-Hopf), $\Sigma$ cannot exist.

Thus, the main difficulty is to construct a line field with these properties. For that, we will use the previous work by the authors \cite{GM3}, which gives a sufficient condition (the existence of a \emph{transitive family of solutions}, see Definition \ref{defitransi}) for the existence of such line field, in the general context of classes of immersed surfaces modelled by elliptic PDEs in Riemannian three-manifolds. We next give a brief sketch of the organization of the paper.

Sections \ref{sec2} and  \ref{subsec:geo3} present preliminary material needed for our study. In Section \ref{sec2} we review basic properties of $\Ek$ spaces and of rotational surfaces in $\Ek$. In Section \ref{subsec:geo3} we analyze in detail the Weingarten equations \eqref{speci1} and \eqref{gwi}, and we study their associated fully nonlinear elliptic PDEs with respect to local coordinates in the ambient space.

In Section \ref{sec:canonical} we show that on any general Weingarten class of surfaces $\cW$ in $\Ek$ there is a unique (up to ambient isometry) inextendible rotational surface $S$ of $\cW$ that touches its rotation axis orthogonally at some point. We call such surface $S$ the \emph{canonical rotational example} of the class $\cW$ and describe its geometry. Of particular importance is the fact that the angle function of $S$ is a strictly monotonic function in terms of any regular parametrization of the profile curve of $S$.

In Section \ref{secepru} we prove Theorem \ref{mainth}, as we explain next. In Section \ref{sub:ext} we deal with extension properties of the canonical example $S$, through ODE analysis. 
In Section \ref{sec:r3} we prove Theorem \ref{mainth} for the case that $M=\Ek$ is diffeomorphic to $\R^3$. In this situation we show that either: (i) $S$ is an entire graph, and so no compact examples exist in $\cW$, or (ii) the family composed by all the oriented surfaces in $M$ that are congruent to $S$, or that can be expressed as a limit example of surfaces in $M$ congruent to $S$, is a \emph{transitive family of surfaces}, i.e. a family of oriented surfaces whose Legendrian lifts to the unit tangent bundle $TU(M)$ actually foliate $TU(M)$. In these conditions we can apply our results in \cite{GM3} to deduce that any immersed sphere $\Sigma$ in the class $\cW$ is congruent to $S$. The rest of the properties stated in Theorem \ref{mainth} follow from our analysis in Sections \ref{sec:canonical} and \ref{sub:ext}.

In Sections \ref{sec:s2xr} and \ref{sec:s3} we prove Theorem \ref{mainth} when $M=\Ek$ is not diffeomorphic to $\R^3$ (thus, it is diffeomorphic to $\S^2\times \R$ or to $\S^3$). This time, due to the change of topology, the main task is to describe in detail the behavior of the canonical example $S$ when it approaches the \emph{antipodal fiber} of its rotation axis in $M$.

In Sections \ref{sec6}, \ref{sec7} and \ref{sec8} we give natural, general choices of classes $\cW$ for which the canonical example $S$ has bounded second fundamental form, and hence Theorem \ref{mainth} applies. In particular, we prove that this is the case for the following classes in $M=\Ek$:

\begin{enumerate}
\item
$H=\Phi(H^2-K,\nu^2)$ with $a<\Phi<b$, being $a,b>0$. (Section \ref{sec6}).
 \item
$K_e=c>0$, or more generally, $K_e=\Phi(\nu^2)>0$. (Section \ref{sec7}).
 \item
\emph{Any} elliptic Weingarten equation $H=\phi(H^2-K)$ in $M=\H^2\times \R$. (Section \ref{sec8}).
\end{enumerate}
In Section \ref{sec8} we will also show examples of general Weingarten classes in $\H^2\times \R$, and of elliptic Weingarten classes in $\S^2\times \R$, where the canonical example $S$ is non-complete and has unbounded second fundamental form.

The paper closes with two appendices. In the first one we indicate that some of our main theorems also hold (and were not previously known) when the ambient space $M$ is the Euclidean space $\R^3$, or the constant curvature sphere $\S^3(c)$, $c>0$. In the second appendix we review in more detail the geometry of Berger spheres, i.e. of the $\Ek$-spaces diffeomorphic to $\S^3$.

The authors are grateful to Joaquín Pérez and Francisco Torralbo for useful comments and discussions.

\section{Homogeneous manifolds and transitive families of surfaces}\label{sec2}

\subsection{Description of $\Ek$ spaces}\label{descr}

Let $M$ be a simply connected homogeneous three-manifold with ${\rm dim} ({\rm Iso}(M))>3$, and assume that $M$ is not a space form. Then ${\rm dim} ({\rm Iso}(M))=4$ and $M$ is an $\Ek$ space for some $(\kappa,\tau)\in \R^2$ with $\kappa \neq 4\tau^2$ (see \cite{D}); we next make a quick review of some standard aspects of the theory.

Every $\Ek$ space admits a canonical Riemannian fibration $\pi:\Ek\flecha \mathbb{M}^2(\kappa)$ over the simply connected $2$-dimensional space $\mathbb{M}^2(\kappa)$ of constant curvature $\kappa$. After choosing orientations, we can define the vertical unit field $\xi$ associated to this fibration; it is a Killing field on $\Ek$. If $\tau=0$ we get the Riemannian product spaces $\H^2(\kappa)\times \R$ and $\S^2(\kappa)\times \R$. When $\tau\neq 0$ we get the Riemannian Heisenberg space $\rm{Nil}_3$ for $\kappa=0$, Berger spheres for $\kappa>0$, and rotationally symmetric left invariant metrics on the universal cover of ${\rm PSL}(2,\R)$ if $\kappa <0$.

If $\kappa>0$, the space $\Ek$ is homeomorphic to $\S^3$ when $\tau\neq 0$, and to $\S^2\times \R$ when $\tau=0$. If $\kappa\leq 0$, the space $\Ek$ is homeomorphic to $\R^3$. 

All spaces $\Ek$ are rotationally invariant, i.e. for each $p\in \Ek$ there exists a continuous $1$-parameter family of orientation preserving isometries of $\Ek$ that leave pointwise fixed the fiber $\pi^{-1}(\pi(p))$.

It is convenient to use what we will call the \emph{coordinate model} $\Rk$ for $\Ek$. Specifically, we define $\Rk$ to be
$\R^3$ if $\kappa\geq 0$, or
$\D\left(2/\sqrt{-\kappa}\right)\times\R$, if $\kappa<0$,
where $\D(\rho)=\{(x_1,x_2)\in\R^2\,;\,x_1^2+x_2^2<\rho^2\}$,
endowed in any case with the Riemannian metric
\begin{equation}\label{eq:metric}
ds^2 = \lambda^2 (dx_1^2+ dx_2^2)+\big(\tau\lambda (x_2dx_1 -
x_1dx_2) + dx_3\big)^2, \hspace{0.5cm}
\lambda=\frac{1}{1+\frac{\kappa}{4}(x_1^2+x_2^2)}.
\end{equation} 
If $\kappa\leq 0$, the space $(\Rk,ds^2)$ is globally isometric to $\Ek$. When $\kappa>0$, $(\Rk,ds^2)$ is isometric to $(\S^2(\kappa)\setminus\{p\}) \times \R$ if $\tau=0$, and to the Riemannian universal cover of the Berger sphere $\Ek$ minus one fiber if $\tau \neq 0$; see Appendix 2 for a more definite description of these coordinates when $\Ek$ is a Berger sphere. When $\kappa>0$, the space $\Ek$ can be covered by two charts of coordinate models.

In view of \eqref{eq:metric}, the following maps are orientation preserving isometries of $\Rk$ which are independent of the value of $(\kappa,\tau)$, and that give rise to global isometries of $\Ek$:

\begin{enumerate}
\item
Vertical translations $(x_1,x_2,x_3)\mapsto (x_1,x_2,x_3+c)$, for every $c\in \R$.
 \item
Rotations around the $x_3$-axis: $(x_1,x_2,x_3)\mapsto (x_1 \cos \theta + x_2\sin \theta, -x_1\sin \theta + x_2 \cos \theta, x_3)$, for every $\theta\in [0,2\pi)$.
 \item
Rotations of angle $\pi$ around horizontal lines passing through the origin; for instance, $(x_1,x_2,x_3)\mapsto (x_1,-x_2,-x_3)$.
\end{enumerate}

The vertical unit Killing field $\xi$ of $\Ek$ in this model is $\xi=\frac{\parc}{\parc x_3}$; it is the Killing field associated with the isometries of $\Ek$ given by vertical translations. We observe that, in this model, the projection $\pi:\Ek\flecha \mathbb{M}^2(\kappa)$ is represented by the projection $(x_1,x_2,x_3)\mapsto (x_1,x_2)$.

\begin{remark}\label{ekte}
\emph{If we allow the possibility $\kappa=4\tau^2$ for the values of $(\kappa,\tau)$, the model above also recovers the Euclidean three-space $(\kappa=\tau=0)$ and the three-sphere $\S^3(c)$ for $\kappa=4\tau^2=4c>0$, with one caveat: in this $\Ek$ model of $\R^3$ or $\S^3(c)$ we are prescribing a particular unit Killing field of the space as the vertical one. }


\emph{It should be noted that the hyperbolic space $\H^3$ cannot be recovered by any of these models.}
\end{remark}

The unit tangent bundle of $\Ek$ is naturally identified with the space of all oriented planes in the tangent bundle of $\Ek$. In this way, for any oriented plane $\Pi\subset T_p \Ek$ in the unit tangent bundle of $\Ek$, we can consider its \emph{inclination} or \emph{angle} as the product $\esiz N,\xi(p)\esde$ between its unit normal $N$ and the vertical field $\xi$ at $p\in \Ek$. Note that, while three-dimensional space forms are \emph{isotropic}, i.e. any two elements of its unit tangent bundle can be connected by an ambient isometry, this property is not true for general homogeneous three-manifolds. For $\Ek$ spaces an intermediate situation happens:

\begin{lemma}\label{incli}
Let $\Pi_i\subset T_{p_i} \Ek$, $i=1,2$, be two oriented planes in the unit tangent bundle of $\Ek$, and assume that they have the same inclination. Then there exists an orientation preserving isometry $\Psi$ of $\Ek$ with $\Psi(p_2)=p_1$, $d\Psi_{p_2} (\Pi_2)= \Pi_1$ and $d\Psi_{p_2}(\xi(p_2))=\xi(p_1)$. If the inclination is not $\pm 1$, the isometry $\Psi$ is unique.
\end{lemma}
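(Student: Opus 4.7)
The plan is to construct $\Psi$ as a composition of two ingredients: first, an isometry carrying $p_2$ to $p_1$ while preserving $\xi$, and second, a vertical rotation at $p_1$ that aligns the two oriented planes. This splitting mirrors the structure of $\mathrm{Iso}(\Ek)$, whose $4$-dimensional identity component $G_0$ preserves $\xi$ as a vector field and acts transitively on $\Ek$, with $1$-dimensional isotropy at each point (the circle of rotations around the vertical axis through that point).

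First, I would use transitivity of $G_0$ to produce an orientation-preserving isometry $\Psi_1$ with $\Psi_1(p_2)=p_1$ and, automatically, $d\Psi_1(\xi(p_2))=\xi(p_1)$. Concretely, in the coordinate model $\Rk$, $\Psi_1$ can be realized as a composition of a vertical translation, a rotation around the $x_3$-axis, and the lift of an element of $\mathrm{Iso}(\mathbb{M}^2(\kappa))$ that moves $\pi(p_2)$ to $\pi(p_1)$. Setting $\tilde\Pi_2:=d\Psi_1(\Pi_2)$ with unit normal $\tilde N_2$, since $d\Psi_1$ preserves both the metric and $\xi$, one has $\langle \tilde N_2,\xi(p_1)\rangle=\langle N_2,\xi(p_2)\rangle$, which by hypothesis equals the inclination $\nu_0$ of $\Pi_1$.

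Next, the vertical rotations at $p_1$ fix $\xi(p_1)$ and rotate the horizontal plane $\xi(p_1)^\perp$; in particular, they act transitively on the set of unit vectors in $T_{p_1}\Ek$ with $\xi(p_1)$-component equal to $\nu_0$, which is a circle when $|\nu_0|<1$ and a single point when $|\nu_0|=1$. So I can choose such a rotation $R$ with $R_*\tilde N_2=N_1$ (taking $R=\mathrm{id}$ in the degenerate inclination case) and set $\Psi:=R\circ\Psi_1$, which satisfies all three required conditions.

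For uniqueness when $|\nu_0|<1$, any two such isometries differ by an orientation-preserving isometry fixing $p_1$, preserving $\xi(p_1)$, and preserving $\Pi_1$ with orientation. The subgroup of $\mathrm{Iso}^+(\Ek)$ fixing $p_1$ and $\xi(p_1)$ coincides with the circle of vertical rotations at $p_1$: a dimension count $4-3=1$ shows the identity component of the isotropy is already $1$-dimensional and consists of these rotations, while any other orientation-preserving isometry fixing $p_1$ (such as a $\pi$-rotation around a horizontal geodesic through $p_1$) reverses $\xi(p_1)$ and is therefore excluded. The only such rotation fixing $N_1$ is the identity, since the horizontal part of $N_1$ is nonzero whenever $|\nu_0|<1$. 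The main obstacle will be precisely this stabilizer identification, i.e.\ certifying that no exotic orientation-preserving isometries fix both $p_1$ and $\xi(p_1)$; I would handle it by the above dimension count together with direct inspection of the discrete symmetries of $\Ek$ generated by the explicit isometries listed in the coordinate model.
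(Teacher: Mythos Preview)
Your existence argument is essentially identical to the paper's: first transport $p_2$ to $p_1$ by a $\xi$-preserving isometry (the paper uses left translations in the Lie group cases and a product isometry for $\S^2(\kappa)\times\R$, rather than your explicit coordinate description, but the content is the same), then rotate around the vertical fiber through $p_1$ to align the planes.

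For uniqueness your approach differs slightly from the paper's, and the paper's is cleaner. You argue through the isotropy structure: two candidate isometries differ by an element of the stabilizer of $(p_1,\xi(p_1))$, which you identify with the circle of vertical rotations, and then observe that only the identity fixes $N_1$ when $|\nu_0|<1$. You correctly flag the stabilizer identification as the main obstacle. The paper bypasses this entirely with a direct linear-algebra argument: if $\Psi^1,\Psi^2$ both satisfy the conclusion and $v$ is the unit normal of $\Pi_2$, then $d\Psi^1_{p_2}$ and $d\Psi^2_{p_2}$ agree on each of $\xi(p_2)$, $v$, and $\xi(p_2)\times v$; since $|\nu_0|\neq 1$ these three vectors form a basis of $T_{p_2}\Ek$, so the differentials coincide and hence $\Psi^1=\Psi^2$. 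This avoids any structural analysis of the isometry group and is worth knowing as the more economical route.
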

\begin{proof}
Observe first that for every $p_1,p_2\in \Ek$ there exists an orientation preserving isometry $\Psi'$ of $\Ek$ with $\Psi'(p_2)=p_1$ and $d\Psi'_{p_2}(\xi(p_2))=\xi(p_1)$. To see this, it suffices to consider an adequate left translation in the space when $\Ek$ is not $\S^2(\kappa)\times \R$  (in which case $\Ek$ is a Lie group endowed with a left invariant metric, and $\xi$ is left invariant; see e.g. \cite{mpe11}), or the composition of an adequate orientation preserving isometry of $\S^2(\kappa)$ with a vertical translation when the ambient space is $\S^2(\kappa)\times \R$. The inclination of the planes $d\Psi'_{p_2} (\Pi_2)$ and $\Pi_1$ are clearly the same, which implies that there exists a rotation $\mathcal{R}$  with vertical axis passing through $p_1$ in $\Ek$ that sends $d\Psi'_{p_2} (\Pi_2)$ to $\Pi_1$. The composition $\Psi:= \cR\circ \Psi'$ proves the desired existence.

As regards uniqueness, assume that the inclination of $\Pi_2$ is not $\pm 1$, and let $\Psi^1, \Psi^2$ denote two isometries in the conditions of Lemma \ref{incli}. If $v$ denotes the oriented unit normal of the plane $\Pi_2$, then $d\Psi^1_{p_2} (w)=d\Psi^2_{p_2}(w)$ for each $w\in \{\xi(p_2),v,\xi(p_2)\times v\}$. This proves that $\Psi^1=\Psi^2$, as wished.

\end{proof}

\subsection{Transitive families of surfaces in $\Ek$} We next recall a concept from our previous work \cite{GM3} that will be of special importance for our purposes here: the notion of \emph{transitive family of surfaces} in a Riemannian three-manifold $M$.

Let $M$ be an oriented Riemannian three-manifold, and consider for every immersed oriented surface $\Sigma$ in $M$ its associated Legendrian lift $\cL_{\Sigma}: \Sigma \flecha TU(M)$ into the unit tangent bundle $$TU(M)=\{(p,w): p\in M, w\in T_p M, |w|=1\},$$ which assigns to each $q\in \Sigma$ the value $\cL_{\Sigma} (q)= (q, N(q))\in TU(M)$, where $N$ denotes the unit normal of $\Sigma$.

\begin{definition}\label{defitransi}
Let $\cS$ be a family of immersed oriented surfaces in $M$. We say that $\cS$ is a \emph{transitive family} if the family of Legendrian lifts $\{\cL_S : S \in \cS\}$ satisfies:
 \begin{enumerate}
 \item
Each $\cL_S$ is an embedding into $TU(M)$. 
\item
For every $(p,w)\in TU(M)$ there exists a unique $S=S(p,w)\in \cS$ with $(p,w)\in \cL_S$.
\item
 The family $\cS=\{S(p,w):(p,w)\in TU(M)\}$ is $C^3$ with respect to $(p,w)$.
 \end{enumerate}
\end{definition}

Assume now that $M=\Ek$. We will be interested in finding transitive families of \emph{rotational} surfaces $\Sigma$ in $\Ek$, which will be parametrized as 
 \begin{equation}\label{eq4ro}
\psi(s,t)=\Psi_t (\gamma(s)),\end{equation} where $\{\Psi_t : t\in \S^1\}$ is a one-parameter group of rotations in $M$ that leave pointwise fixed some vertical geodesic of $M$ (the \emph{axis} of $\Sigma$), and $\gamma(s)$ is the profile curve.

Let ${\rm Iso}^0(M)$ denote the group of orientation preserving isometries of $M=\Ek$ that preserve the vertical Killing field $\xi$. Given $\Sigma$ a rotational surface in $\Ek$, let us denote $$\cS_{\Sigma}:=\{\Psi (\Sigma) : \Psi \in {\rm Iso}^0(M)\}.$$ In this family, we will identify two elements $\Psi_1(\Sigma)$, $\Psi_2(\Sigma)$ if there exists some rotation $R$ in $\Ek$ such that $\Psi_2=\Psi_1\circ R$. Note that in this case $\Psi_i(\Sigma)$, $i=1,2$, define the same point set in $\Ek$.

Given an immersed oriented surface $\Sigma$ in $\Ek$, the \emph{angle function} $\nu:\Sigma\flecha [-1,1]$ of $\Sigma$ is defined as the product $\nu=\esiz \eta,\xi\esde$, where $\eta$ is the unit normal of $\Sigma$ and $\xi$ the unit vertical Killing field of $\Ek$. In other words, the angle function assigns to each $q\in \Sigma$ the inclination of $T_q \Sigma$ in $\Ek$.

If $\Sigma$ is a rotational surface in $\Ek$, given by \eqref{eq4ro}, then the angle function of $\Sigma$ only depends on $s$, i.e. it does not depend on the rotation parameter $t$.

\begin{lemma}\label{gaussmap}
Let $\Sigma$ be a surface of revolution immersed in $M=\Ek$, and let $\nu(s):I\subset \R\flecha [-1,1]$ denote its angle function, defined in terms of a regular parameter $s$ of the profile curve of $\Sigma$. Assume that $\nu(s)$ is injective. Then:
 \begin{enumerate}
 \item
The Legendrian lift $\cL_{\Sigma}:\Sigma\flecha TU(M)$ is an embedding.
\item
If $\Sigma$ is a sphere and $\nu(s)$ is surjective onto $[-1,1]$, then $\cS_{\Sigma}$ is a transitive family of surfaces in $M$.
\end{enumerate}
\end{lemma}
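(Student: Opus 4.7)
The rotational parametrization $\psi(s,t)=\Psi_t(\gamma(s))$ of $\Sigma$ reduces both assertions to bookkeeping with Lemma \ref{incli}, which already prescribes exactly when two oriented planes of $TU(M)$ are identified by an element of ${\rm Iso}^0(M)$. For part (1), the Legendrian lift $\cL_\Sigma$ is automatically a smooth immersion, so the embedding claim reduces to injectivity (a homeomorphism onto the image then follows by compactness when $\Sigma$ is a sphere, and by a local-to-global argument in general). To show injectivity, I would suppose $\cL_\Sigma(\psi(s_1,t_1))=\cL_\Sigma(\psi(s_2,t_2))$: equality of base points and of unit normals forces $\nu(s_1)=\nu(s_2)$, so injectivity of $\nu$ gives $s_1=s_2$. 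The remaining identity $\Psi_{t_1}(\gamma(s_1))=\Psi_{t_2}(\gamma(s_1))$ then says that $\Psi_{t_2}^{-1}\Psi_{t_1}$ fixes $\gamma(s_1)$, forcing $t_1=t_2$ unless $\gamma(s_1)$ sits on the rotation axis---but an axis point is a pole of $\Sigma$, a single point of the abstract surface, where injectivity of the lift holds trivially.

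\textbf{Part (2).} For the transitive family claim, I would check the three conditions of Definition \ref{defitransi} for $\cS_\Sigma$ in order. Condition (1) follows from part (1) applied to each $\Psi(\Sigma)$, since every $\Psi\in{\rm Iso}^0(M)$ preserves $\xi$ and hence the angle function, so $\nu_{\Psi(\Sigma)}$ remains injective. For existence in Condition (2), given $(p,w)\in TU(M)$ I would set $\nu_0:=\esiz w,\xi(p)\esde\in[-1,1]$, use surjectivity of $\nu$ to pick $q\in\Sigma$ with $\nu(q)=\nu_0$, and apply Lemma \ref{incli} to produce an orientation-preserving isometry $\Psi$ with $\Psi(q)=p$, $d\Psi_q(N_\Sigma(q))=w$, and $d\Psi_q(\xi(q))=\xi(p)$; the third condition places $\Psi$ in ${\rm Iso}^0(M)$, hence $(p,w)\in\cL_{\Psi(\Sigma)}$. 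For uniqueness modulo the identification in $\cS_\Sigma$, if two such $\Psi_1,\Psi_2$ come from $q_1,q_2\in\Sigma$, then $\nu(q_1)=\nu(q_2)=\nu_0$, so $q_1$ and $q_2$ lie on a single axis-rotation orbit of $\Sigma$; a rotation $R$ of $\Sigma$ sending $q_2$ to $q_1$ automatically satisfies $dR(N(q_2))=N(q_1)$ and $dR(\xi(q_2))=\xi(q_1)$, and the uniqueness clause of Lemma \ref{incli} then identifies $\Psi_1^{-1}\Psi_2$ with $R$, whence $\Psi_1(\Sigma)=\Psi_2(\Sigma)$ in $\cS_\Sigma$. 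Condition (3), the $C^3$ dependence of $S=S(p,w)$ on $(p,w)$, should then follow from smooth dependence of $\Psi$ on its initial data in Lemma \ref{incli} together with the Lie-group structure of ${\rm Iso}^0(M)$.

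\textbf{Main obstacle.} The delicate point I foresee is the pole case $|\nu_0|=1$, where Lemma \ref{incli} loses uniqueness and determines $\Psi$ only up to a one-parameter family of rotations around the vertical axis through $p$. I expect this to cause no real trouble, because such a rotation of $M$ pulls back under $\Psi_1$ to a rotation of $\Sigma$ about its own axis, which is precisely the equivalence that defines $\cS_\Sigma$; the uniqueness assertion of Condition (2) therefore still goes through at the poles, and the smoothness assertion of Condition (3) survives because the quotient by these rotations is smooth.
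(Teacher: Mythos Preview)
Your proposal is correct and follows essentially the same route as the paper: for (1), equality of Legendrian lifts forces equal angle functions, hence equal profile parameter by injectivity of $\nu$, then equal rotation parameter from the base-point identity; for (2), existence and uniqueness of $S(p,w)$ both come from Lemma \ref{incli}, with the ambiguity in the choice of $q$ on the orbit circle absorbed by the rotation equivalence built into $\cS_\Sigma$. You are in fact somewhat more careful than the paper on two points---the distinction between injective immersion and embedding (the paper simply asserts the latter once injectivity is shown) and the pole case $|\nu_0|=1$ (the paper just remarks that $\beta$ degenerates to a point and leaves the rest implicit)---but neither of these reflects a difference in strategy.
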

\begin{proof}
Let $q_1,q_2\in \Sigma$ have $\cL_{\Sigma}(q_1)=\cL_{\Sigma} (q_2)$. Then the angle functions of $\Sigma$ at both points agree, which implies (by injectivity of $\nu$) that $q_1,q_2$ lie in the same meridian curve of $\Sigma$. As the unit normal of $\Sigma$ at both points also agrees, we easily conclude that $q_1=q_2$; thus $\cL_{\Sigma}$ is an embedding, what proves the first item.

For the second item, note that the family $\cS_{\Sigma}$ satisfies conditions (1) and (3) of Definition \ref{defitransi}. We now show that $\cS_{\Sigma}$ also satisfies the second one. Let $(p,w)\in TU(M)$, let $\nu_0$ denote the inclination of $w$ (i.e. $\nu_0:=\esiz w, \xi(p)\esde$), and let $\beta$ denote the meridian curve of $\Sigma$ along which the angle $\nu(s)$ of the surface equals $\nu_0$ (if $\nu_0=\pm1$, $\beta$ is just a point at which $\Sigma$ touches its rotation axis orthogonally). By Lemma \ref{incli} we see that for any $q\in \beta$ there exists an isometry $\Psi\in {\rm Iso}^0(M)$ with $\Psi(q)=p$, $d\Psi_q (N(q))=w$ and $d\Psi_q (\xi(q))=\xi(p)$, where $N$ denotes the unit normal of $\Sigma$. Moreover, by the rotational symmetry of $\Sigma$, two such isometries $\Psi, \Psi'$ corresponding to different points $q,q'\in \beta$ are related by $\Psi=\Psi'\circ R$ where $R$ is a rotation in $M$ around the axis of the surface $\Sigma$, with $R(q)=q'$. Thus, $\Psi(\Sigma)$ and $\Psi'(\Sigma)$ define the same element in the family $\cS_{\Sigma}$. The existence and uniqueness of this isometry proves condition (2) of Definition \ref{defitransi} and completes the proof of Lemma \ref{gaussmap}.
\end{proof}

\subsection{Rotational surfaces in $\Ek$: basic formulas}

Let $\Sigma$ be a rotational surface in $\Ek$, which we will assume is contained in a coordinate model $\cR^3(\kappa,\tau)$, and so that its rotation axis is the $x_3$-axis in the $(x_1,x_2,x_3)$-coordinates of this model. Hence, we can parametrize $\Sigma$ as 
 \begin{equation}\label{res0}
 \psi(u,v)=(\rho(u) \cos v, \rho (u) \sin v, h(u)),
 \end{equation}
with $4+\kappa \rho(u)^2 >0$ for all $u$. A long but direct computation in this model shows that the angle function of $\Sigma$ is 
 \begin{equation}\label{res1}
 \nu(u)= \frac{4 \rho'(u)}{\sqrt{h'(u)^2 (4+ \kappa \rho(u)^2)^2 + 16 \rho'(u)^2 (1+\tau^2 \rho(u)^2)}},
 \end{equation}
that its mean curvature $H$ is 
 \begin{equation}\label{res2}
 H=\frac{\left(4+\kappa  \rho^2\right)^2 \left(-h'^3 \kappa ^2 \rho^4+16 h' \left(h'^2-\rho \rho''+\rho'^2\right)+16 \rho^3
   \tau ^2 (h'' \rho'-h' \rho'')+16 h'' \rho \rho'\right)}{8 \rho \left(h'^2 \left(4+\kappa 
   \rho^2\right)^2+16\rho '^2 \left(\rho^2 \tau ^2+1\right)\right)^{3/2}}
 \end{equation}
and that its extrinsic curvature $K_e$ (i.e. the product of its principal curvatures) is

\begin{equation}\label{res3}
\def\arraystretch{1.6}\begin{array}{lll} K_e &=&\displaystyle\frac{h' \left(4+\kappa \rho^2\right)^2 \left(4-\rho^2 \left(\kappa -8 \tau ^2\right)\right) \left(h'' \rho' \left(4+\kappa 
   \rho^2\right)-h' \left(\rho'' \left(4+\kappa \rho^2\right)-2 \rho \rho'^2 \left(\kappa -4 \tau ^2\right)\right)\right)}{\rho \left(h'^2
   \left(4+\kappa \rho^2\right)^2+16 \rho'^2 \left(1+\rho^2 \tau ^2\right)\right)^2} \\ &  &-\displaystyle \frac{ \tau
   ^2 \left(h'^2 \left(4+\kappa \rho^2\right)^2+4 \rho^2 \rho'^2 \left(\kappa -4 \tau ^2\right)\right)^2}{ \left(h'^2
   \left(4+\kappa \rho^2\right)^2+16 \rho'^2 \left(\rho^2 \tau ^2+1\right)\right)^2}.\end{array}\end{equation}
Note that 
 \begin{equation}\label{res30}
 \nu^2 \leq \frac{1}{1+\tau^2 \rho^2}.
 \end{equation}

Consider next the parameter $s=s(u)$ for the profile curve given by the arclength parameter of the curve with respect to the Riemannian metric in the $(\rho,h)$-plane 
 \begin{equation}\label{metpv}
d\sigma^2 = (1+\tau^2 \rho^2) d\rho^2 + \frac{(4+\kappa \rho^2)^2}{16} dh^2 .
 \end{equation}


It follows then from \eqref{res1}, \eqref{res2} and \eqref{res3} that, with respect to this parameter $s$, we have

\begin{equation}\label{res5}
\nu(s)=\rho'(s),
\end{equation}
 \begin{equation}\label{resH}
 H= \frac{4+\rho'^2 (-4+(\kappa-8\tau^2)\rho^2)-\rho(\kappa \rho + \rho''(4+\kappa \rho^2)(1+\tau^2\rho^2))}{8 \rho \sqrt{1-\rho'^2 (1+\tau^2 \rho^2)}},
 \end{equation}
and 
\begin{equation}\label{res6}
K_e = -\tau^2 + \frac{\rho''}{16 \rho} (4+\kappa \rho^2)(-4+(\kappa-8\tau^2) \rho^2), \hspace{1cm} \rho=\rho(s).
\end{equation}

\section{General Weingarten surfaces in $\Ek$}\label{subsec:geo3}

\subsection{Rewriting the Weingarten equation $W(\kappa_1,\kappa_2)=0$}\label{sec:rew}
Let $\Sigma$ be an immersed oriented surface in an oriented Riemannian three-manifold $(M,\esiz,\esde)$, and assume that $\Sigma$ is an elliptic Weingarten surface. That is, its principal curvatures $\kappa_1,\kappa_2$ satisfy an equation of the type  \begin{equation}\label{weq}W(\kappa_1,\kappa_2)=0,\end{equation} where $W\in C^\8(\R^2)$ is symmetric (i.e. $W(k_1,k_2)=W(k_2,k_1)$) and satisfies the ellipticity condition 
 \begin{equation}\label{eliwe}
 \frac{\parc W}{\parc k_1} \frac{\parc W}{\parc k_2}>0 \hspace{1cm} \text{if} \ W=0. \end{equation} 
 Note that \eqref{eliwe} implies that $W^{-1}(0)\subset \R^2$ is a disjoint union of regular curves, all of which can be seen as local graphs of the form $k_1=f(k_2)$, with $f$ strictly decreasing ($f'<0$).
 
 The symmetry condition for $W$ ensures that \eqref{weq} can be written in the form $\Phi(H,K_e)=0$ for some $C^\8$ function $\Phi$, where $H,K_e$ denote respectively the mean and extrinsic curvatures of $\Sigma$. The ellipticity condition \eqref{eliwe} implies that, when \eqref{weq} is seen as a second order PDE after writing the surface $\Sigma$ as a local graph over its tangent plane with respect to some local coordinate system on $M$, this PDE is elliptic.
 
 Nonetheless, it is convenient to rewrite \eqref{weq} in a more adequate way for its study in order to avoid certain problems. For example, consider the elliptic Weingarten functional $W(k_1,k_2)=(k_1-1)(k_2-1)-1$, and the related Weingarten equation \eqref{weq} for surfaces in $\R^3$. Clearly, planes are solutions to this equation, but round spheres of radius $1/2$ also are. At first sight, this would seem to contradict the maximum principle for elliptic PDEs. However, this situation is explained by the fact that $W^{-1}(0)\subset \R^2$ has two connected components, one corresponding to the region where $k_i<1$ and the other to the region $k_i>1$, for $i=1,2$. In this sense, the geometry of the Weingarten surfaces that satisfy \eqref{weq} depends not only on the function $W$, but also on the connected component of $W^{-1}(0)$ in which the pair $(\kappa_1(p),\kappa_2(p))$ lies in, for all $p\in \Sigma$.

One way to handle this indetermination is the following: let $(x_0,y_0)\in W^{-1}(0)$, and let $\Gamma$ be the connected component of $W^{-1}(0)$ that contains $(x_0,y_0)$; note that if $W^{-1}(0)$ is empty, so is its associated geometric theory and there is nothing to study. It then follows by the symmetry and ellipticity conditions on $W$ that $\Gamma$ can be seen as a graph of the form
 \begin{equation}
 \frac{k_1+k_2}{2} = \phi\left(\frac{(k_1-k_2)^2}{4}\right),
 \end{equation}
where $\phi\in C^\8([0,\8))$ satisfies (by ellipticity) the condition
 \begin{equation}\label{elihk}
 4t (\phi'(t))^2<1 \hspace{1cm} \text{ for all } t\geq 0.
 \end{equation}
In other words, for the immersed surface $\Sigma$ in $M$, the relation $W(\kappa_1,\kappa_2)=0$ with $(\kappa_1,\kappa_2)\in \Gamma$ can be rewritten in terms of its mean and extrinsic curvature $H,K_e$ as
 \begin{equation}\label{weq2}
 H=\phi(H^2-K_e)
 \end{equation}
with $\phi$ satisfying \eqref{elihk}; note that this corresponds to \eqref{speci1}.

There is another way of handling the indetermination in equation \eqref{weq} that will also be useful for our purposes. By the conditions on the function $W$, it is clear that we can write $W(k_1,k_2)=0$ for $(k_1,k_2)\in \Gamma$ in $\R^2$ as
 \begin{equation}\label{weq3}
 k_1=f(k_2), \hspace{1cm} 
 \end{equation}
where $f$ is defined on an interval $(a,b)\subset \R$, and satisfies the following conditions:

\begin{enumerate}
\item[(i)]
$f$ is $C^\8$, and $f'<0$ (by ellipticity).
 \item[(ii)]
$f\circ f = {\rm Id} $ (by symmetry of $W$).
 \item[(iii)]
If $a\neq -\8$, then $b=+\8$ and $f(x)\to +\8$ as $x\to a$.
 \item[(iv)]
If $b\neq +\8$, then $a=-\8$ and $f(x)\to -\8$ as $x\to b$.
\end{enumerate}

Moreover, there exists $\alfa\in \R$, 	that we will call the \emph{umbilicity constant} of \eqref{weq3}, given by $\alfa=f(\alfa)$. For \eqref{weq2}, this constant is given by $\alfa=\phi (0)$, and for \eqref{weq}, by $W(\alfa,\alfa)=0$ with $\alfa \in \Gamma$. By making, if necessary, the change $f(x)\mapsto -f(-x)$ in \eqref{weq3} while reversing the orientation of the surface, we may assume without loss of generality that $\alfa\geq 0$; \emph{this will be assumed from now on}.

\begin{remark}
In the case that $W(k_1,k_2)$ is not symmetric, the fully nonlinear elliptic PDE associated to the Weingarten surfaces that satisfy $W(\kappa_1,\kappa_2)=0$ when we view the surface as a local graph in $M$ is not $C^1$ at the umbilical points of $\Sigma$. This creates an important complication in the study of such elliptic Weingarten surfaces, even when $M$ is the Euclidean three-space $\R^3$. See e.g. \cite{A,GM3} for more details on this issue.
\end{remark}

\subsection{General Weingarten surfaces in $\Ek$: definition}\label{sec:wde}

We consider now $M=\Ek$. Initially, we define a \emph{general Weingarten surface} in $\Ek$ as an immersed oriented surface in $\Ek$ whose mean curvature $H$, extrinsic curvature $K_e$ and Gauss curvature $K$ satisfy an elliptic $C^\8$ relation $\cF(H,K_e,K)=0$. It is however convenient to rewrite this expression in terms of the principal curvatures $\kappa_1, \kappa_2$ and the \emph{angle function} $\nu$ of the surface.

First, by the Gauss equation, $K,K_e$ and $\nu$ are related by the formula (see e.g. \cite{D})
 \begin{equation}\label{gausseq}
 K= K_e + \tau^2 + (\kappa-4\tau^2)\nu^2.
 \end{equation}
Thus, noting that $H=(\kappa_1+\kappa_2)/2$, $K_e=\kappa_1 \kappa_2$, the equation $\cF(H,K_e,K)=0$ can be rewritten as $W(\kappa_1,\kappa_2,\nu^2)=0$. Clearly, the resulting function $W(k_1,k_2,v)\in C^{\8}(\R^2\times [0,1])$ satisfies the symmetry condition $W(k_1,k_2,v)=W(k_2,k_1,v)$. 

Let now $\Sigma$ be an immersed oriented surface in $M=\Ek$, and $p\in \Sigma$. Choose local coordinates $(x,y,z)$ on $M$ around $p$, so that $\Sigma$ is viewed around $p$ as an upwards-oriented graph $z=u(x,y)$ in these coordinates. Then, the mean and extrinsic curvatures of $\Sigma$ are given by $$H=\mathcal{H} (x,y,u,u_x,u_y,u_{xx},u_{xy},u_{yy}), \hspace{0.5cm}  K_e = \mathcal{K}(x,y,u,u_x,u_y,u_{xx},u_{xy},u_{yy}),$$ where $\mathcal{H}, \mathcal{K}$ are smooth on $\cU \times \R^5\subset \R^8$; here $\cU\subset \R^3$ is the region where the coordinates $(x,y,z)$ vary. 

Taking this into account, it is clear that the general Weingarten equation $W(\kappa_1,\kappa_2,\nu^2)=0$ in $M$ can be written as a PDE 
$$\Psi[u]:=\Psi(x,y,u,u_x,u_y,u_{xx},u_{xy},u_{yy})=0,$$ where $\Psi=\Psi(x,y,z,p,q,r,s,t)$ is smooth on $\cU \times \R^5\subset \R^8$. 

Moreover, one can check that the ellipticity condition $4\Psi_r \Psi_t-\Psi_s^2>0$ for $\Psi$ holds at points where $\Psi(x,y,z,p,q,r,s,t)=0$ if and only if $W_{k_1} W_{k_2}>0$ holds for $W$ at points with $W(k_1,k_2,v)=0$; a way to do this is to easily check that this property is true at the origin for a canonical coordinate system $(x_1,x_2,x_3)$ of $\Ek$, and then use the geometric invariance of equation $W(\kappa_1,\kappa_2,\nu^2)=0$ to deduce that this property holds true for arbitrary coordinates $(x,y,z)$ in $\Ek$. Thus, we give the following definition:

\begin{definition}\label{prescri}
A \emph{general (elliptic) Weingarten surface} in $M=\Ek$ is an immersed oriented surface in $\Ek$ whose principal curvatures $\kappa_1,\kappa_2$ and angle function $\nu$ verify a relation
\begin{equation}\label{preq}
W(\kappa_1,\kappa_2,\nu^2)=0,
\end{equation}
where the function $W=W(k_1,k_2,v)\in C^{\8} (\R^2\times [0,1])$ satisfies:
 \begin{enumerate}
 \item
$W$ is symmetric in $k_1,k_2$, i.e. $W(k_1,k_2,v)=W(k_2,k_1,v)$.
 \item
At points $(k_1,k_2,v)$ where $W=0$ we have the following ellipticity condition:
 \begin{equation}\label{preli}
 \frac{\parc W}{\parc k_1}\frac{\parc W}{\parc k_2} >0.
 \end{equation}
 \end{enumerate}

A \emph{general Weingarten class of surfaces in $\Ek$}, denoted $\cW$, is the class of immersed oriented surfaces in $\Ek$ that satisfy a specific general Weingarten relation \eqref{preq}.

\end{definition}

By the arguments in Section \ref{sec:rew}, the general Weingarten equation \eqref{preq} subject to the conditions of Definition \ref{prescri} can be alternatively written for some smooth function $\Phi(t,v)$ on $[0,\8)\times [0,1]$ as 
 \begin{equation}\label{preq2}
 H= \Phi (H^2-K_e,\nu^2), \hspace{0.5cm} \text{with} \hspace{0.5cm} 4 t \left(\frac{\parc \Phi}{\parc t}(t,v)\right)^2<1 \hspace{0.5cm}
  \forall (t,v),
 \end{equation}
i.e. we recover \eqref{gwi}-\eqref{gwi2}. Likewise, we can also write \eqref{preq} in these conditions as
\begin{equation}\label{preq3}
\kappa_1=f(\kappa_2,\nu^2)
\end{equation}
where for each $v\in [0,1]$ fixed, the function $f=f(\cdot, v)$ is defined on a real interval $(a,b)=(a(v),b(v))$, and satisfies properties (i) to (iv) after equation \eqref{weq3} of Section \ref{sec:rew}.

We should note that, in this situation, each interval  $(a,b)=(a(v),b(v))$ depends on $v\in [0,1]$ in a non-necessarily continuous way (one can construct examples that illustrate this possibility).

The most studied case of general elliptic Weingarten surfaces in $\Ek$ spaces is, obviously, the case of constant mean curvature surfaces. An outline of the beginning of this theory can be found in \cite{DHM,FM}. Surfaces of constant positive extrinsic curvature in $\H^2\times \R$ and $\S^2\times \R$ have been studied in \cite{EGR}. The case of surfaces of constant (intrinsic) Gauss curvature in $\H^2\times \R$ and $\S^2\times \R$ was first studied in detail in \cite{AEG}. Surfaces satisfying general elliptic Weingarten equations of the type $H=\Phi(\nu^2)$ were discussed in \cite{GM}, in a more general context of surfaces in metric Lie groups whose mean curvature is given as a function of its left-invariant Gauss map. A general study of surfaces in $\R^3$ satisfying $H=\Phi(\nu^2)$ can be found in \cite{BGM}. For results about elliptic Weingarten surfaces in $\H^2\times \R$ and $\S^2\times \R$, see \cite{FP,M,MR}.

\subsection{General Weingarten surfaces and elliptic PDEs}\label{sec:wedp}

We next study the elliptic PDE that defines general Weingarten surfaces with respect to some special coordinates on $M=\Ek$. As explained in Section \ref{descr}, $\Ek$ can be covered by one (resp. two) canonical coordinate charts if $\kappa\leq 0$ (resp. $\kappa>0$). Let $\Rk$ be one of these standard coordinate model for $\Ek$, and let $\cW$ be a class of general Weingarten surfaces in $\Ek$, given by some function $W$ in the conditions of Definition \ref{prescri}. Let $(x_1,x_2,x_3)$ be the canonical coordinates in $\Rk$. Then, by the previous discussion, there exists an elliptic PDE $F[u]=0$ in these coordinates that models the class $\cW$, in the sense that an upwards-oriented graph $x_3=u(x_1,x_2)$ is an element of $\cW$ if and only if $u$ is a solution to $F[u]=0$. Also by our previous discussion, the function $F$ satisfies the following properties:

\begin{enumerate}
 \item
$F=F(x,y,z,p,q,r,s,t)\in C^{\8}(\cU)$, where $\cU=\Rk\times \R^5\subset \R^8$. 
 \item
The following ellipticity condition holds on $F^{-1}(0)\subset \cU$: \begin{equation}\label{unieli} 4 F_r F_t-F_s^2>0.\end{equation}
 \item
$F$ is invariant by vertical translations, i.e. $F$ does not depend on $z$.
 \item
$F$ is \emph{rotationally invariant} with respect to the $x_3$-axis. This means: if $u=u(x_1,
x_2)\in C^2(\Omega)$ for some domain $\Omega \subset \R^2$, and $\theta\in [0,2\pi)$, then if we define $$(x_1^{\theta},x_2^{\theta}):= (\cos \theta x_1 + \sin \theta x_2, -\sin \theta x_1 + \cos \theta x_2),$$ and $u^{\theta}(x_1^{\theta},x_2^{\theta})$ by the relation $u^{\theta}(x_1^{\theta},x_2^{\theta})=u(x_1,x_2)$ for every $(x_1,x_2)\in \Omega$, the following holds: \emph{the value of $F[u]$ at $(x_1,x_2)$ equals the value of $F[u_{\theta}]$ at $(x_1^{\theta},x_2^{\theta})$}.
 \end{enumerate}

We can also define, associated to the class $\cW$, a similar elliptic PDE $F^*[u]=0$ for the same canonical coordinates $(x_1,x_2,x_3)$, but this time with respect to the \emph{downwards} vertical direction. Since $\cW$ is closed under the transformation $(x_1,x_2,x_3)\mapsto (x_1,-x_2,-x_3)$, which is an orientation preserving isometry of all $\Ek$ spaces, we can conclude that the function $F^*$ is determined by $F$, by 
 \begin{equation}\label{ffes}
 F(x,y,p,q,r,s,t)=F^*(x,-y,-p,q,-r,s,-t).
 \end{equation}

We consider next horizontal directions. Let $\Sigma$ be an element of $\cW$ that can be seen as a downwards-oriented graph $x_1=f(x_2,x_3)$ for the canonical coordinates $(x_1,x_2,x_3)$; here, \emph{downwards-oriented} means that the unit normal of $\Sigma$ points towards the region $\{x_1<0\}$ at every $p\in \Sigma$. Then, $f$ satisfies an elliptic PDE
 \begin{equation}\label{defho}
 G[f]:=G(f,x_2,x_3,f_{x_2},f_{x_3},f_{x_2x_2}, f_{x_2 x_3},f_{x_3 x_3})=0
 \end{equation}
for some $G\in C^\8 (\Rk\times \R^5)$. Note that $G$ does not actually depend on $x_3$, since $\cW$ is closed by the vertical translations $(x_1,x_2,x_3)\mapsto (x_1,x_2,x_3+c)$, $c\in \R$. Similarly, since $\cW$ is closed by the $180º$-rotation $(x_1,x_2,x_3)\mapsto (x_1,-x_2,-x_3)$, the function $G$ satisfies the symmetry condition 
\begin{equation}\label{simh}
G(x,y,p,q,r,s,t)=G(x,-y,-p,-q,r,s,t).
\end{equation}
Moreover, since $\cW$ is closed by arbitrary rotations around the $x_3$-axis, the elliptic PDE \eqref{defho} determines the corresponding elliptic PDE associated to $\cW$ for any other horizontal direction, and in particular for upwards-oriented graphs $x_1=f(x_2,x_3)$.

In this sense, all properties of the general Weingarten class $\cW$ are basically condensed on the elliptic PDEs $F[u]=0$ and $G[u]=0$.

\begin{definition}\label{defifu}
We call $F$ and $G$ the \emph{defining functions} of the general Weingarten class $\cW$.
\end{definition}

\section{The geometry of canonical rotational examples}\label{sec:canonical}

In this section we will analyze the geometry of rotational general Weingarten surfaces in $\Ek$ that intersect their rotation axis orthogonally. First, in Section \ref{sec:app} we will show that such rotational surfaces exist, by means of a more general theorem about existence of radial solutions of fully nonlinear elliptic PDEs in dimension two. We remark that Section \ref{sec:app} can be read independently from the rest of the paper.

\subsection{Existence of radial solutions of fully nonlinear elliptic PDEs}\label{sec:app}
Consider the second order PDE in two variables for $u=u(x,y)$
 \begin{equation}\label{fulinoa}
 F(x,y,u_x,u_y,u_{xx},u_{xy},u_{yy})=0,
 \end{equation}
which for brevity will be denoted as $F[u]=0$, where $F=F(x,y,p,q,r,s,t)\in C^{3}(\cU)$, with $\cU\subset \R^7$ a convex open set. We assume that $F$ satisfies:
 \begin{enumerate}
 \item[$i)$]
$F_r>0$ and $4 F_r F_t - F_s^2>0$ on $\cU$ (ellipticity condition).
 \item[$ii)$]
There exists $\alfa\in \R$ such that $p_0:=(0,0,0,0,\alfa,0,\alfa)\in \cU$ and $F(p_0)=0$.
 \item[$iii)$]
$F$ is \emph{rotationally invariant} with respect to the $z$-axis, in the sense explained in Section \ref{sec:wedp}, i.e., for every $C^2$ function $u(x,y)$ and every $\theta\in [0,2\pi)$, if we define $(x_{\theta},y_{\theta}):= (\cos \theta x + \sin \theta y, -\sin \theta x + \cos \theta y)$ and $u^{\theta}$ given by $u^{\theta}(x_{\theta},y_{\theta})=u(x,y)$, then the value of $F[u]$ at $(x,y)$ equals the value of $F[u_{\theta}]$ at $(x_{\theta},y_{\theta})$.
 \end{enumerate}
Observe that since $F$ does not depend on the variable $z$, a solution to \eqref{fulinoa} is defined up to additive constants. In this Section \ref{sec:app} we show that there exists a radial solution $u\in C^2(D(0,\delta))$ of \eqref{fulinoa}, defined on a sufficiently small disk $D(0,\delta)\subset \R^2$. Here, by \emph{radial} we mean that $u$ depends solely on $\rho:=\sqrt{x^2+y^2}$. For any radial function $u$, denoting $x+i y=\rho e^{i\theta}$, we have
$u_{x}+iu_{y}= u'(\rho) e^{i\theta}$ and $$\def\arraystretch{1.2}\begin{array}{lll}u_{xx} &=& \cos^2 \theta \, u''(\rho) + \sin^2 \theta \, \frac{u'(\rho)}{\rho}, \\ u_{xy} &=& \cos \theta \sin \theta  \, (u''(\rho) -\frac{u'(\rho)}{\rho}),\\  u_{yy} &=& \sin^2 \theta \, u''(\rho) + \cos^2 \theta \, \frac{u'(\rho)}{\rho}.\end{array}$$ This shows that $F[u]=0$ can be reduced for radial solutions to a second order ODE. Specifically, by making $\theta=0$, we see that $u(\rho)$ satisfies
 \begin{equation}\label{edofula}
F(\rho,0,u'(\rho),0,u''(\rho),0,u'(\rho)/\rho)=0.
 \end{equation}
Since $F$ is rotationally invariant, the converse also holds: any solution $u(\rho)$ to \eqref{edofula} trivially describes a radial solution to \eqref{fulinoa}.

However, we should note that the ODE \eqref{edofula} is singular for the Cauchy data $u(0)=u'(0)=0$, so it cannot be solved directly to yield the solution we are looking for.

Also, it is obvious that the condition $ii)$ above is indispensable for the existence of a radial solution to \eqref{fulinoa} that is $C^2$ in some disk $D(0,\delta)$, since any such solution would satisfy $u_x=u_y=u_{xy}=0$ and $u_{xx}=u_{yy}$ at the origin. By the ellipticity condition $i)$ and the convexity of $\cU$, the number $\alfa\in \R$ in $ii)$ is unique.

\begin{lemma}\label{empezar}
Let $F\in C^3(\cU)$ satisfy conditions $i),ii),iii)$ above. Then, there exists a radial solution $u\in C^2(D(0,\delta))$ to \eqref{fulinoa} for $\delta>0$ small enough.

Moreover, any other radial solution $v\in C^2(D(0,\delta'))$ to \eqref{fulinoa} is given on $D(0,{\rm min}\{\delta,\delta'\})$ by $v=u+c$ for some $c\in\R$.

\end{lemma}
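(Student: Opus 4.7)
The plan is to reduce \eqref{fulinoa} to a scalar singular ODE, desingularize it by the change of variable $w(\rho):=u'(\rho)/\rho$, and produce a solution via a Banach contraction on an integral reformulation. Since $F_r>0$ by $i)$, the implicit function theorem applied to $F=0$ near the point $p_0$ expresses $r$ as a $C^3$ function, so \eqref{edofula} becomes
$u''(\rho)=\Psi\bigl(\rho,u'(\rho),u'(\rho)/\rho\bigr)$,
with $\Psi\in C^3$ near $(0,0,\alfa)$. For any $C^2$ radial solution one must have $u'(0)=0$ and $u''(0)=\alfa$, where $\alfa$ is the unique constant from $ii)$ (unique by the ellipticity and the convexity of $\cU$).

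Setting $w(\rho):=u'(\rho)/\rho$, so that $u''=w+\rho w'$, the ODE becomes the first order equation $\rho w'(\rho)=h(\rho,w(\rho))$, with $h(\rho,w):=\Psi(\rho,\rho w,w)-w$. Clearly $h(0,\alfa)=0$. Differentiating the implicit relation $F(\rho,0,P,0,\Psi,0,T)=0$ with respect to $T$ gives $\Psi_T=-F_t/F_r$; from $4F_rF_t-F_s^2>0$ and $F_r>0$ one obtains $F_t>0$, so
$h_w(0,\alfa)=\Psi_T(0,0,\alfa)-1=-\mu$, where $\mu:=1+F_t(p_0)/F_r(p_0)>1$.
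Writing $h(\rho,w)=-\mu(w-\alfa)+g(\rho,w)$, the remainder satisfies $g(0,\alfa)=0$ and $g_w(0,\alfa)=0$, so on a small neighborhood of $(0,\alfa)$ we have the estimates $|g(\rho,w)|\leq C(\rho+(w-\alfa)^2)$ and $|g(\rho,w_1)-g(\rho,w_2)|\leq L(\rho+|w_1-\alfa|+|w_2-\alfa|)\,|w_1-w_2|$.

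Multiplying $\rho w'+\mu w=\mu\alfa+g(\rho,w)$ by $\rho^{\mu-1}$ produces $(\rho^{\mu}w)'=\mu\alfa\,\rho^{\mu-1}+\rho^{\mu-1}g(\rho,w)$, which integrates (using $\rho^\mu w\to 0$ as $\rho\to 0$, since $\mu>0$ and $w$ is bounded) to the fixed point equation
$w(\rho)=\alfa+\rho^{-\mu}\int_{0}^{\rho} s^{\mu-1}g(s,w(s))\,ds=:T[w](\rho)$.
Since $\mu>1$, the kernel $s^{\mu-1}/\rho^{\mu}$ has bounded $L^1$-norm on $(0,\rho)$; combining this with the two estimates above shows that for $\delta,\epsilon>0$ sufficiently small, $T$ maps the closed ball $\{w\in C^{0}([0,\delta]):\|w-\alfa\|_\infty\leq \epsilon\}$ into itself and is a contraction. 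The Banach fixed point theorem yields a unique continuous $w$ with $w(0)=\alfa$; the function $u(\rho):=c+\int_0^\rho s\,w(s)\,ds$ then satisfies $u'=\rho w$ and $u''=w+\rho w'=w+h(\rho,w)$, hence is $C^0$ in $\rho$, with $u''(0)=\alfa$. Using the formulas for $u_{xx},u_{xy},u_{yy}$ recorded at the beginning of Section \ref{sec:app}, all second derivatives can be expressed in terms of $w(\rho)$ and $\rho w'(\rho)=h(\rho,w(\rho))$, both continuous on $[0,\delta]$ with $\rho w'(0)=h(0,\alfa)=0$; this gives $u\in C^{2}(D(0,\delta))$. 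Uniqueness up to an additive constant follows because any radial $C^2$ solution $v$ of \eqref{fulinoa} on $D(0,\delta')$ has $v'(0)=0$ and $v''(0)=\alfa$, so $w_v:=v'/\rho$ lies in the same ball and is a continuous fixed point of $T$; uniqueness of the fixed point forces $w_v=w$ on $[0,\min\{\delta,\delta'\}]$ and hence $v=u+c$.

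The main obstacle is the regular singular point of the radial ODE at $\rho=0$, which rules out direct use of Picard--Lindel\"of. The decisive observation is that ellipticity alone forces $h_w(0,\alfa)<-1$; this strict negativity produces the integrating factor $\rho^\mu$ with $\mu>1$ that converts the singular ODE into a contractive integral equation with an $L^1$ kernel. All other ingredients (the $C^3$ hypothesis on $F$, the uniqueness of $\alfa$, the convexity of $\cU$) are ultimately used only to guarantee the regularity and the quadratic vanishing of $g$ at $(0,\alfa)$ needed to close the contraction.
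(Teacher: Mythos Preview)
Your argument is correct and takes a genuinely different route from the paper's.  The paper solves the Dirichlet problem $F[u]=0$ on $D(0,\delta)$, $u=0$ on $\partial D(0,\delta)$, by the continuity method: it deforms $F$ to a problem $F^{1}$ whose solution is a known hemisphere, and obtains the required a~priori $C^2$ estimates by sandwiching any solution between two comparison hemispheres $\phi_{\pm}$ (using the comparison principle and the ellipticity of $F$); uniqueness is obtained directly from the maximum principle.  Your approach instead stays entirely at the ODE level: you desingularize the radial equation by $w=u'/\rho$, observe that ellipticity forces the linearized coefficient $h_w(0,\alpha)=-\mu$ with $\mu=1+F_t(p_0)/F_r(p_0)>1$, and then solve the resulting regular--singular first order equation by an explicit integrating factor and a Banach contraction.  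Your method is more elementary---it avoids the continuity method and the Gilbarg--Trudinger machinery altogether---and makes transparent exactly where ellipticity enters (namely, to make the singular point $\rho=0$ attracting for $w$).  The paper's PDE approach, on the other hand, uses only the comparison principle and would adapt more readily to higher-dimensional or less symmetric situations.

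Two small remarks.  First, the emphasis on $\mu>1$ is slightly misleading: the $L^1$-norm of the kernel $s^{\mu-1}/\rho^{\mu}$ over $(0,\rho)$ equals $1/\mu$ for every $\mu>0$, so the contraction and the self-map estimates go through for any $\mu>0$; what $\mu>1$ (equivalently $F_t(p_0)>0$) really reflects is the full ellipticity, but the argument would already work with the weaker $\mu>0$.  Second, in your uniqueness step the Banach fixed point only gives $w_v=w$ inside the small ball $\|w-\alpha\|_\infty\le\varepsilon$, hence a~priori only on a possibly smaller interval $[0,\delta'']$; to conclude on all of $[0,\min\{\delta,\delta'\}]$ you should add one line invoking standard Picard--Lindel\"of uniqueness for the (regular) ODE $\rho w'=h(\rho,w)$ at points $\rho>0$.
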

\begin{proof}
Uniqueness is immediate by the maximum principle and the independence of $F$ with respect to $z$. For the existence part, we use the continuity method. Let $\alfa\in \R$ be given by condition $ii)$ above, choose $\delta\in (0,1/|\alfa|)$ and let $\phi^{\alfa}(x,y):=c_{\alfa} - {\rm sign} (\alfa) \sqrt{(1/\alfa)^2-x^2-y^2}$ be the function defining a hemisphere of center $(0,0,0)$ and radius $1/|\alfa|$, translated by the constant $c_{\alfa}\in \R$ so that $\phi^{\alfa} =0$ on $\parc D(0,\delta)$ (if $\alfa=0$ we simply take $\phi^{\alfa}=0$). Define, for each $\sigma\in [0,1]$,
 \begin{equation}\label{eqsigma}
F^{\sigma}(x,y,p,q,r,s,t):=F(x,y,p,q,r,s,t)-\sigma F(x,y,\phi^{\alfa}_x,\phi^{\alfa}_y,\phi^{\alfa}_{xx},\phi^{\alfa}_{xy},\phi^{\alfa}_{yy}).
 \end{equation}
Associated to $F^{\sigma}$ we can consider the continuous PDE family $F^{\sigma}[u]=0$, which, choosing a smaller $\delta>0$ if necessary, is well defined and elliptic on the convex open set $\cV:=\cU\cap \{(x_1,\dots, x_7) : x_1^2+x_2^2<\delta^2\}$.
Note that:
 \begin{enumerate}
 \item
$F^{\sigma}\in C^3(\cV)$ and $F^{\sigma}_r=F_r$, $F^{\sigma}_s=F_s$, $F^{\sigma}_t=F_t$ on $\cV$.
 \item
$F^{\sigma}$ is rotationally invariant with respect to the $z$-axis.
 \item
 $F_0=F$.
 \item
$p_0\in \cV$ and $F^{\sigma}(p_0)= 0$ for every $\sigma\in [0,1]$.
 \item
For $\sigma=1$, the function $\phi^{\alfa}$ is a solution to the elliptic PDE $F^{\sigma}[u]=0$ with $\phi^{\alfa}=0$ on $\parc D(0,\delta)$.
\end{enumerate}
In order to prove the existence part in Lemma \ref{empezar}, it suffices to show that the Dirichlet problem $F[u]=0$ on $\Omega:=D(0,\delta)$, with $u=0$ on $\parc\Omega$, has a solution for $\delta>0$ small enough. Indeed, since $F$ does not depend on $z$, the solution to this Dirichlet problem is unique, and thus by condition $iii)$,  a radial function.

A standard application of the continuity method (see e.g. Theorem 17.8 in \cite{GT}) shows that this Dirichlet problem $F[u]=0$ on $\Omega$, $u=0$ on $\parc \Omega$, has a solution for some $\delta>0$ if we can obtain a priori $C^2$ estimates for the problems $F^{\sigma}[u]=0$ on $\Omega$, $u=0$ on $\parc \Omega$. We obtain these a priori estimates next.

Let $\ep>0$ such that $p_{\pm}:=(0,0,0,0,\alfa\pm \ep, 0,\alfa \pm \ep)\in \cV$ and $\alfa\pm \ep \neq 0$. Note that 
 \begin{equation}\label{ineqf}
 F^{\sigma} (p_-) <0< F^{\sigma} (p_+), \hspace{1cm} \forall \sigma\in [0,1],
 \end{equation}
by the monotonicity properties implied by the ellipticity of $F$ and the condition $F^{\sigma}(p_0)=0$. Define now the comparison hemispheres 
 \begin{equation}\label{defipm}
\phi_{\pm}(x,y):=c_{\pm} -{\rm sign} (\alfa \pm \ep) \sqrt{\frac{1}{(\alfa\pm \ep)^2} -x^2-y^2},\end{equation} where $c_{\pm}$ are constants to be determined later, and note that $(0,0,D \phi_{\pm} (0,0), D^2 \phi_{\pm} (0,0))= p_{\pm}$.  Thus, by \eqref{ineqf} there exists some $\delta>0$ small enough such that, for all $(x,y)\in D(0,\delta)$ and all $\sigma\in [0,1]$, the following conditions hold:

\begin{equation}\label{muchas}
\left\{ \def\arraystretch{1.4} \begin{array}{l} (x,y,D \phi_{\pm} (x,y), D^2 \phi_{\pm} (x,y)) \in \cV, \\ F^{\sigma}(x,y,D \phi_{-} (x,y), D^2 \phi_{-} (x,y))< -\frac{\gamma}{2}, \\ F^{\sigma}(x,y,D \phi_{+} (x,y), D^2 \phi_{+} (x,y))>\frac{\gamma}{2},  \end{array}\right.
\end{equation}
where $$\gamma:= {\rm min}_{\sigma\in [0,1]}\{{\rm min}\{|F^{\sigma}(p_{-})|,|F^{\sigma} (p_+)|\}\}>0.$$

Let now $u_{\sigma}(x,y)$ be the solution to the Dirichlet problem $F^{\sigma}[u]=0$ in $\Omega:=D(0,\delta)$, $u=0$ on $\parc \Omega$ for this new $\delta>0$; we note that $u_{\sigma}$ is, in case it exists, a radial function. We choose the constants $c_{\pm}$ in \eqref{defipm} so that $\phi_{\pm}=0$ on $\parc \Omega$ too. As $F$ is elliptic, we get from \eqref{muchas} and the comparison principle that 
\begin{equation}\label{comp1}
\phi_+(x,y)\leq u^{\sigma}(x,y)\leq \phi_- (x,y), \hspace{1cm} \forall (x,y)\in \overline{\Omega}, \ \forall \sigma\in [0,1].
\end{equation}
Noting that $\phi_{\pm}$ and $u_{\sigma}$ are all radial functions depending solely on $\rho=\sqrt{x^2+y^2}$ that coincide for $\rho =\delta$, it is immediate that $(\phi_-)'(\delta)\leq (u_{\sigma})'(\delta)\leq (\phi_+)'(\delta)$. As $F$ does not depend on $z$ and these inequalities for derivatives do not depend on the value chosen for $c_{\pm}$, we can derive from \eqref{comp1} the same estimate for any $\delta_0\in (0,\delta)$, obtaining finally that 
 \begin{equation}\label{comp2}
\phi_-'(\rho) \leq u_{\sigma}'(\rho)\leq \phi_+'(\rho)
, \hspace{1cm} \forall \rho\in[0,\delta), \ \forall \sigma\in [0,1].
 \end{equation}
This gives a priori $C^1$ estimates for the family of solutions $\{u_{\sigma} : \sigma\in [0,1]\}$.

In order to derive a priori $C^2$ estimates, we first observe that since $(F^{\sigma})_r =F_r >0$ for every $\sigma\in [0,1]$, by the implicit function theorem there exists some $R>0$ around $p_0$ such that, in the ball $\overline{B(p_0,R)}\subset \cV$, the equation $F^{\sigma}=0$ can be rewritten as 
 \begin{equation}\label{impli}
r= G^{\sigma} (x,y,p,q,s,t),\end{equation} for some $C^3$ function $G^{\sigma}$ defined in a convex neighborhood $\mathcal{O}$ of $(0,0,0,0,0,\alfa)\in \R^6$.

By taking $\ep>0$ small enough we can assume that $p_{\pm}\in B(p_0,R)$. Similarly, taking $\delta>0$ small enough we can assume that 
\begin{equation*} (x,y,D \phi_{\pm} (x,y), D^2 \phi_{\pm} (x,y)) \in B(p_0,R) \hspace{1cm} \forall (x,y)\in \Omega,\end{equation*} which implies in particular that \begin{equation}\label{comp5} \left(\rho,0,\phi_{\pm}'(\rho),0, \phi_{\pm}''(\rho),0, \frac{\phi_{\pm}'(\rho)}{\rho} \right)\in B(p_0,R) \hspace{1cm} \forall \rho\in (0,\delta).\end{equation}

By the radial symmetry of $u_{\sigma}$ and $F^{\sigma}$, equation $F^{\sigma}[u_{\sigma}]=0$ can be rewritten in $B(p_0,R)$ using \eqref{impli} as an ODE in normal form
 \begin{equation}\label{odesii}
 u_{\sigma}''(\rho)= G^{\sigma}\left(\rho,0, u_{\sigma}'(\rho),0,0,\frac{u_{\sigma}'(\rho)}{\rho}\right),
 \end{equation}
for every $\rho\in (0,\delta)$. In addition, by \eqref{comp2}, we have
 \begin{equation}\label{comp3}
 \frac{\phi_-'(\rho)}{\rho} \leq \frac{u_{\sigma}'(\rho)}{\rho} \leq \frac{\phi_+'(\rho)}{\rho}.
 \end{equation}
This implies that the right-hand side of \eqref{odesii} is well defined for all $\rho\in (0,\delta)$. Thus, by \eqref{impli}, $$\left(\rho,0, u_{\sigma}'(\rho),0,u_{\sigma}''(\rho),0,\frac{u_{\sigma}'(\rho)}{\rho}\right) \in B(p_0,r),$$ for all $\rho\in (0,\delta)$, and this implies that $$\{(x,y,D u_{\sigma} (x,y), D^2 u_{\sigma}(x,y)): (x,y)\in \Omega, \sigma\in [0,1]\} \subset B(p_0,R),$$ which has compact closure in $\cV$. This yields the desired a priori $C^2$ estimates and completes the proof of Lemma \ref{empezar}.
\end{proof}

\subsection{Rotational general Weingarten surfaces}\label{canexe}

Let $\cW$ denote a class of general Weingarten surfaces in $M=\Ek$, let $\Rk$ be a standard coordinate model for $\Ek$, with canonical coordinates $(x_1,x_2,x_3)$, and let $F\in C^\8(\Rk\times \R^5)$ be the \emph{defining function} of the class $\cW$ (Definition \ref{defifu}). In this section we are going to consider rotationally invariant surfaces of $\cW$ around the $x_3$-axis, given as upwards-oriented graphs $x_3=u(x_1,x_2)$ of radial solutions $u$ to $F[u]=0$. By the computations at the beginning of Section \ref{sec:app}, we know that $u(\rho)$ satisfies the second order ODE
 \begin{equation}\label{edoful}
F(\rho,0,u'(\rho),0,u''(\rho),0,u'(\rho)/\rho)=0,
 \end{equation}
and that, by rotational invariance of $F$, the converse also holds: any solution $u(\rho)$ to \eqref{edoful} trivially describes a radial solution to the PDE $F[u]=0$. 

When $\rho>0$ the ellipticity of $F[u]=0$ implies that $F_r\neq 0$ at all points of the form $$(\rho,0,u'(\rho),0,u''(\rho),0,u'(\rho)/\rho)\in F^{-1}(0).$$ Thus, \eqref{edoful} can be written locally in normal form around those points as 
 \begin{equation}\label{edonor1}
 u''(\rho)= \cF(\rho,u'(\rho)),
 \end{equation} 
for some smooth function $\cF$. In particular, \eqref{edoful} can be locally solved away from $\rho=0$.

However, \eqref{edoful} becomes singular when $\rho=0$, i.e. when the radial graph $x_3=u(\sqrt{x_1^2+x_2^2})$ touches its axis. Still, for this situation, Lemma \ref{empezar} shows that there exists a radial graph $x_3=u(\rho)$, $\rho:=\sqrt{x_1^2+x_2^2}$, that belongs to the class $\cW$ and which is defined on a disk $D(0,\delta)$. Note that this graph is a rotational surface in $\Ek$ that intersects its rotation axis orthogonally, and that $u(\rho)$ is a solution to \eqref{edoful}. Also, by the uniqueness statement of Lemma \ref{empezar} and the invariance of the class $\cW$ with respect to orientation preserving ambient isometries (including $180º$-rotations around horizontal geodesics of $\Ek$), we can deduce that this graph is, up to ambient isometries, the \emph{unique} rotational surface of the general Weingarten class $\cW$ that intersects its rotation axis orthogonally.

Here, in principle, \emph{uniqueness} is to be understood in the following way: if $S_1,S_2$ are two rotational surfaces of $\cW$ that touch their respective rotation axes orthogonally at points $p_1,p_2\in \Ek$, then there exists an orientation preserving isometry $\Psi$ of $\Ek$ with $\Psi(p_1)=p_2$ such that $S_1$ and $\Psi(S_2)$ coincide on a neighborhood of $p_1$.

But once here, noting that when $\rho>0$ the differential equation \eqref{edoful} can be written in the normal form \eqref{edonor1}, standard results from ODE theory imply that both $S_1,S_2$ can be extended to \emph{maximal} or \emph{inextendible} rotational surfaces of the class $\cW$, and in that case we have $S_1=\Psi(S_2)$.

All of this justifies the following definition.

\begin{definition}\label{def:rote}
Let $\cW$ denote a general Weingarten class of surfaces in $\Ek$. The \emph{canonical rotational example} of $\cW$ is defined as the unique (up to orientation preserving ambient isometries), inextendible surface of $\cW$ that is rotational and meets its rotation axis orthogonally.
\end{definition}

An illustrative example of this notion is given by the class of surfaces in $\H^2\times \R$ of constant mean curvature $H\in \R$. When $|H|>1/2$, the canonical rotational example is the sphere in $\H^2\times \R$ of mean curvature $H$. When $0<|H|\leq 1/2$, the canonical rotational example is a certain entire CMC graph with vanishing Abresch-Rosenberg differential. When $H=0$, the canonical rotational example is a slice $\H^2\times \{t_0\}$.

In the next Section \ref{monoton} we will describe a key property of these canonical rotational examples.

\subsection{Monotonicity of the angle function}\label{monoton}

Consider a rotational graph $x_3=u(\rho)$, $\rho:=\sqrt{x_1^2+x_2^2}$, in $\Rk$, and let $\nu(\rho)$ denote its angle function. We will always use the \emph{upwards orientation} on these graphs, so that $\nu$ is positive. By \eqref{res1}, we have 
 \begin{equation}\label{eqan1}
 \frac{16}{\nu(\rho)^2} = u'(\rho)^2 (4+\rho^2 \kappa)^2 +16 (1+\tau^2 \rho^2).
 \end{equation}

The next lemma considers a \emph{cone-type surface} with constant angle function in $M=\Ek$ that will be useful for comparison purposes (see Lemma \ref{monan}). The proof follows after elementary computations using \eqref{res2}, \eqref{res3} and \eqref{eqan1} that we omit. Note that in the limit case $\kappa=\tau=0$, i.e. when $M=\R^3$, this \emph{cone} is just a standard rotational cone in $\R^3$.

\begin{lemma}\label{conlema}
Consider on $\Rk$ the upwards-oriented rotational cone-type surface $C_{\beta}$ given by the radial graph $z=h(\sqrt{x_1^2+x_2^2})$, where 
 \begin{equation}\label{aches}
h(\rho):=\frac{4}{\beta} \int_0^{\rho} \frac{\sqrt{1-\beta^2 (1+\tau^2 t^2)}}{4+\kappa t^2} dt \ +c,\end{equation} $\beta\in (-1,1)$, $\beta\neq 0$, and $c\in \R$ is an arbitrary integration constant. The function $h$ is defined for all positive values of $\rho$ that satisfy the following additional restrictions: $\rho <2/\sqrt{-\kappa}$ if $\kappa<0$, and $\rho<\frac{\sqrt{1-\beta^2}}{|\beta \, \tau|}$ if $\tau\neq 0$. Then:
 \begin{enumerate}
 \item
The angle function of $C_{\beta}$ is constant of value $|\beta|\in (0,1)$.
 \item
The extrinsic curvature of $C_{\beta}$ is constant of value $-\tau^2$.
 \item
The mean curvature $H=H(\rho)$ of $C_{\beta}$ is strictly decreasing (resp. increasing) if $\beta>0$ (resp. $\beta<0$).
 \end{enumerate}
\end{lemma}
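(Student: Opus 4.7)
The plan is to prove each of the three items by direct substitution of the defining formula for $h(\rho)$ into the explicit formulas \eqref{eqan1}, \eqref{res2} and \eqref{res3} derived earlier for rotational surfaces. I would parametrize the profile curve by taking $u = \rho$ (so $\rho(u) = u$, $\rho'(u)=1$, $\rho''(u)=0$). Setting $A(\rho) := 1-\beta^2(1+\tau^2\rho^2)$ and $B(\rho) := 4+\kappa\rho^2$, differentiation of the integrand yields
$$h'(\rho) = \frac{4\sqrt{A(\rho)}}{\beta\, B(\rho)},$$
and a second differentiation gives $h''(\rho)$ as an explicit rational expression in $\rho$, $\beta$, $\kappa$, $\tau$. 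Note in particular that $\text{sign}(h'(\rho)) = \text{sign}(\beta)$, which will be the only way the sign of $\beta$ enters the computations.

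For item (1), plugging $h'(\rho)$ into \eqref{eqan1} gives
$$\frac{16}{\nu(\rho)^2} \;=\; h'(\rho)^2\, B(\rho)^2 + 16(1+\tau^2\rho^2) \;=\; \frac{16\,A(\rho)}{\beta^2} + 16(1+\tau^2\rho^2) \;=\; \frac{16}{\beta^2},$$
after the cancellation $A(\rho)/\beta^2 = 1/\beta^2 - (1+\tau^2\rho^2)$. Hence $\nu = |\beta|$. For item (2), now that $\nu$ is constant, I would switch to the arclength parameter $s$ of the profile curve with respect to the metric \eqref{metpv}. By \eqref{res5}, $\rho'(s) = |\beta|$ is constant, so $\rho''(s)=0$, and substituting into \eqref{res6} gives $K_e = -\tau^2 + 0 = -\tau^2$. (Alternatively, one could plug $h'(\rho)$ and $h''(\rho)$ directly into \eqref{res3}; the long rational expression collapses to $-\tau^2$ after using $A + \beta^2(1+\tau^2\rho^2) = 1$.)

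For item (3), the strategy is to substitute $h'(\rho)$ and $h''(\rho)$ into \eqref{res2}, yielding $H$ as an explicit rational function of $\rho$ whose overall sign is carried by a $1/\beta$ factor inherited from $h'$. Differentiating $H(\rho)$ with respect to $\rho$ and simplifying should reveal that the sign of $H'(\rho)$ is the opposite of the sign of $\beta$ on the entire domain of $h$ (taking into account the restrictions $\rho<2/\sqrt{-\kappa}$ if $\kappa<0$, and $\rho<\sqrt{1-\beta^2}/|\beta\tau|$ if $\tau\neq 0$). A cleaner route for the sign dichotomy is to observe that $C_{-\beta}$ is the image of $C_\beta$ under the ambient isometry $(x_1,x_2,x_3)\mapsto(x_1,-x_2,-x_3)$, which reverses the upward orientation; hence $H$ for $C_{-\beta}$ equals $-H$ for $C_\beta$, and monotonicity is reversed. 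This way, one only needs to check the case $\beta>0$.

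The main obstacle I expect is the explicit verification that $H'(\rho)<0$ throughout the domain in the case $\beta>0$. After substituting $h'$, $h''$ into \eqref{res2}, the resulting $H(\rho)$ is a quotient whose numerator and denominator both contain terms mixing $\kappa$, $\tau$, $\beta$, and $\rho$; isolating the sign of $H'(\rho)$ in a uniform way across the regimes $\kappa>0$, $\kappa=0$, $\kappa<0$ (and $\tau=0$ versus $\tau\neq 0$) will require either careful algebraic rearrangement, or a structural argument, e.g., viewing $H$ as a function of the single variable $\rho$ with the surface's two pointwise constants $\nu = |\beta|$ and $K_e = -\tau^2$ fixed, which determines the principal curvatures $\kappa_1, \kappa_2$ as $H\pm\sqrt{H^2+\tau^2}$, and exploiting the Codazzi equations of $\Ek$ to get a first-order ODE for $H(\rho)$ whose sign is manifest.
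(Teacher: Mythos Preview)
Your proposal is correct and is exactly the approach the paper indicates (the paper's own proof is simply ``elementary computations using \eqref{res2}, \eqref{res3} and \eqref{eqan1} that we omit''). Your arclength shortcut for item~(2) also dispatches the ``main obstacle'' in item~(3): plugging $\rho'(s)=|\beta|$, $\rho''(s)=0$ into \eqref{resH} (valid for $h'>0$, i.e.\ $\beta>0$) gives
\[
H(\rho)=\frac{(1-\beta^2)(4-\kappa\rho^2)-8\beta^2\tau^2\rho^2}{8\rho\sqrt{1-\beta^2(1+\tau^2\rho^2)}},\qquad
H'(\rho)=-\,\frac{(1-\beta^2)^2(4+\kappa\rho^2)}{8\rho^2\big(1-\beta^2(1+\tau^2\rho^2)\big)^{3/2}}<0,
\]
and your symmetry argument via $(x_1,x_2,x_3)\mapsto(x_1,-x_2,-x_3)$ then gives the $\beta<0$ case.
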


We will call these surfaces $C_{\beta}$ \emph{cones in $\Ek$}. Note that from the last two items, the principal curvatures $\kappa_1(\rho),\kappa_2(\rho)$ of $C_{\beta}$ are also decreasing (resp. increasing) if $\beta>0$ (resp. $\beta<0$), and if $\tau\neq 0$ they are actually \emph{strictly} decreasing (resp. increasing). If $\tau=0$, one of the principal curvatures is zero and the other one is strictly monotonic. The cones $C_{\beta}$ and $C_{-\beta}$ differ by an orientation preserving isometry of $\Ek$ and a change of orientation. 

\begin{figure}[h]
\begin{center}
\centering
\includegraphics[height=5cm]{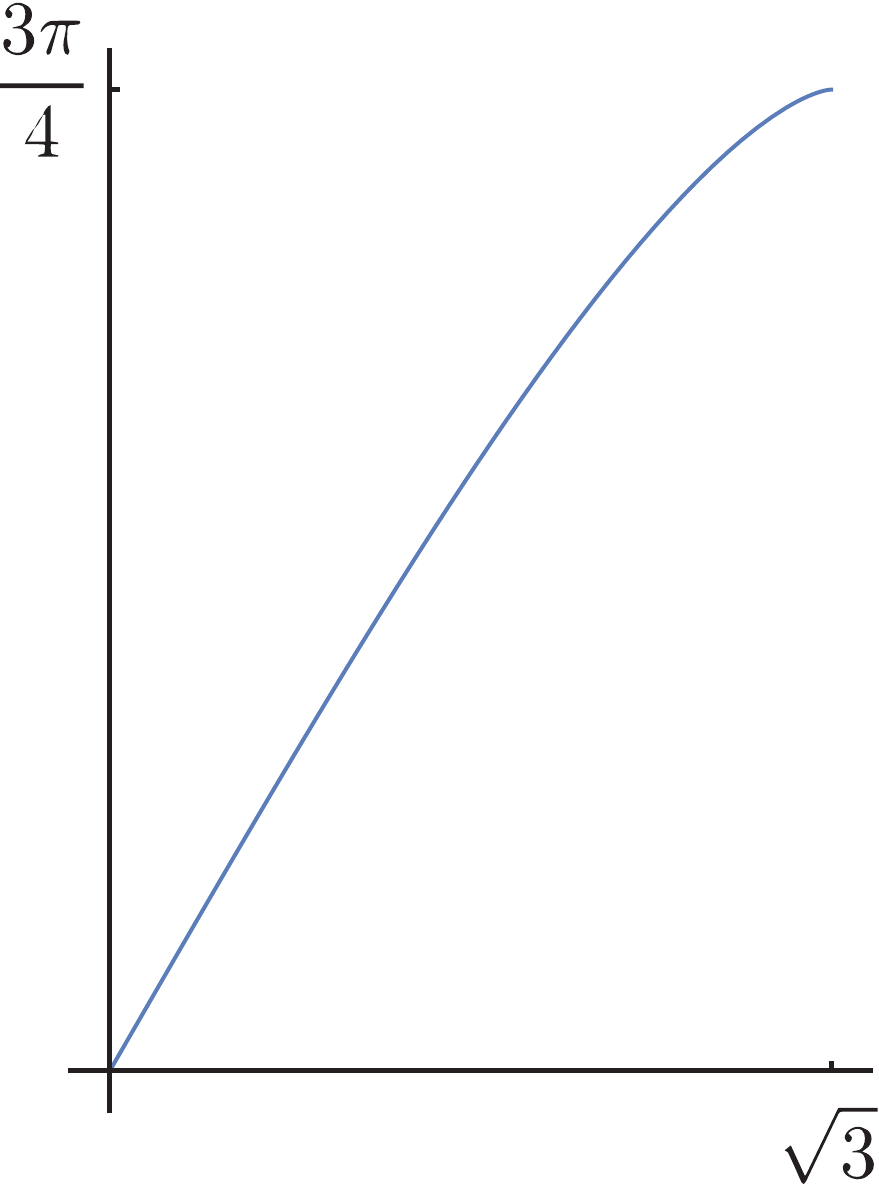} \hspace{2cm}
\includegraphics[height=5.5cm]{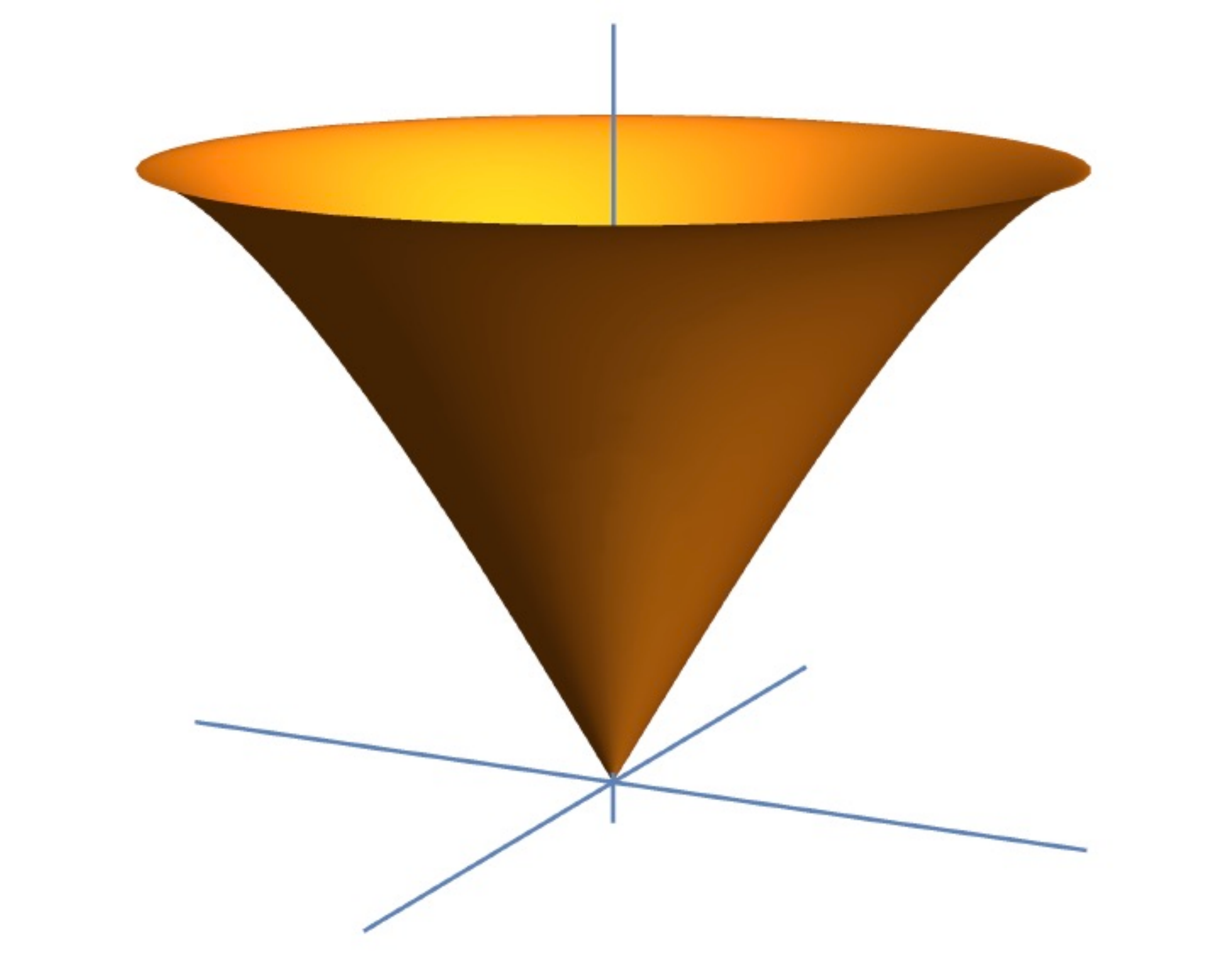}
\caption{Profile curve and picture of the cone $C_{\beta}$, $\beta=1$, in ${\rm Nil}_3=\mathbb{E}^3(0,1)$.}
\end{center}
\end{figure}

The next result is fundamental to our study, since it yields the monotonicity of the angle function of the canonical example of general Weingarten classes of surfaces in $\Ek$ spaces.

\begin{lemma}\label{monan}
Let $x_3=u(\sqrt{x_1^2+x_2^2})$ be an upwards-oriented radial graph in $\Rk$, defined on an open disk $D(0,\delta)$, that satisfies a general Weingarten equation $W(\kappa_1,\kappa_2,\nu^2)=0$ in $\Ek$. If $\tau=0$, assume that $u$ is not constant. Then:
 \begin{enumerate}
 \item
The angle function of the graph is a strictly decreasing function of $\rho:=\sqrt{x_1^2+x_2^2}$.
 \item
 If additionally $\tau =0$, then $u'(\rho)\neq 0$ for every $\rho\in (0,\delta)$.
  \end{enumerate}
\end{lemma}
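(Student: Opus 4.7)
Item (2) follows at once from item (1) and \eqref{eqan1}: when $\tau=0$, the identity $16/\nu^2=u'(\rho)^2(4+\kappa\rho^2)^2+16$ shows $\nu(\rho)<1\Longleftrightarrow u'(\rho)\ne 0$, and (1) together with $\nu(0)=1$ yields $u'(\rho)\ne 0$ for every $\rho\in(0,\delta)$.

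For item (1), the plan is to compare $\Sigma$ with the cones $C_\beta$ of Lemma \ref{conlema}. Parameterize the profile curve $(\rho(s),h(s))$ by arclength in the metric \eqref{metpv}, so that $\nu(s)=\rho'(s)$ by \eqref{res5}. Since $\nu>0$, the chain rule gives $d\nu/d\rho=\rho''(s)/\nu$, and \eqref{res6} rewrites this as
\[\frac{d\nu}{d\rho}=\frac{16\rho\,(K_e+\tau^2)}{\nu\,(4+\kappa\rho^2)\bigl(-4+(\kappa-8\tau^2)\rho^2\bigr)}.\]
The denominator is strictly negative for $\rho$ in a neighborhood of $0$ (shrinking $\delta$ if necessary), so (1) reduces to the strict inequality $K_e>-\tau^2$ on $(0,\delta)$; since by Lemma \ref{conlema} the cones realize equality $K_e\equiv-\tau^2$, they are the natural comparison family. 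A short Taylor expansion of \eqref{eqan1} at $\rho=0$ with $u'(0)=0$ and $u''(0)=\alpha$ (the umbilicity constant of the Weingarten equation) gives $\nu(\rho)=1-\tfrac12(\tau^2+\alpha^2)\rho^2+O(\rho^4)$, which is strictly decreasing for small $\rho$; in the degenerate subcase $\tau=\alpha=0$ the hypothesis that $u$ is not constant supplies the first nonvanishing higher coefficient of $u$, which plays the role of $\alpha$ and yields the same conclusion on an initial interval.

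For the global statement, I argue by contradiction: let $\rho_0$ be the infimum of points in $(0,\delta)$ where $K_e\le -\tau^2$. Then $\nu'(\rho_0)=0$, $\nu''(\rho_0)\ge 0$, $K_e(\rho_0)=-\tau^2$, $\rho''(s_0)=0$, and substituting $\rho''(s_0)=0$ into \eqref{resH} shows $H(\rho_0)$ coincides with the mean curvature of $C_\beta$ at $\rho_0$, where $\beta:=\nu(\rho_0)$. Hence $\Sigma$ and $C_\beta$ share $(H,K_e,\nu)$ at $\rho_0$, and therefore the same principal curvatures $(\kappa_1^\ast,\kappa_2^\ast)$; placing $C_\beta$ with the same axis and matched height along the circle $\{\rho=\rho_0\}$, the radial graphs $u(\rho)$ and $h_\beta(\rho)$ agree to second order there.

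The main obstacle is to convert this second-order agreement into a contradiction. My plan is to write both profiles as solutions of second-order ODEs in normal form: $u''=G(\rho,u')$ from the Weingarten PDE (possible by ellipticity), and $h_\beta''=\tilde G(\rho,h_\beta'):=-N_\rho/N_p$ from the constant-angle condition $\nu=N(\rho,p)\equiv\beta$, with $N$ read off from \eqref{eqan1}. The identity $\nu'(\rho_0)=0$ is equivalent to $G=\tilde G$ at $(\rho_0,u'(\rho_0))$; differentiating $\nu'=N_\rho+N_p u''$ once more and using $N_p<0$ for an upward graph, the local-minimum condition $\nu''(\rho_0)\ge 0$ becomes
\[(G_\rho-\tilde G_\rho)+G\,(G_p-\tilde G_p)\le 0 \quad\text{at } (\rho_0,u'(\rho_0)).\]
The delicate step is to show that this inequality must in fact go strictly the other way. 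The partials of $G$ can be obtained by implicit differentiation of $W(\kappa_1,\kappa_2,\nu^2)=0$ using $W_{k_1}W_{k_2}>0$, and those of $\tilde G$ from the explicit cone formula \eqref{aches}. The key geometric observation is that at the common point $(\kappa_1^\ast,\kappa_2^\ast)$ the tangent direction of the Weingarten level curve $\{W(\cdot,\cdot,\beta^2)=0\}$ cannot coincide with that of the hyperbola $\{k_1 k_2=-\tau^2\}$: when $\tau\ne 0$ their slopes have opposite signs, since $W_{k_1}/W_{k_2}>0$ while $\kappa_2^\ast/\kappa_1^\ast<0$; when $\tau=0$ one of $\kappa_i^\ast$ vanishes but the corresponding $W_{k_i}$ does not. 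This non-coincidence forces the strict reverse inequality and yields the contradiction. The auxiliary possibility that $\nu$ is constant on an interval is excluded by the same geometric argument: $\Sigma$ would then coincide with an arc of $C_\beta$, and the ellipticity of $W$ together with the strict monotonicity of the cone's principal curvatures (Lemma \ref{conlema}) makes it impossible for the cone's curvature trajectory on the hyperbola to lie in the level graph $\{W(\cdot,\cdot,\beta^2)=0\}$.
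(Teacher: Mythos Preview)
Your derivation of item (2) from item (1) via \eqref{eqan1} is correct and a bit cleaner than the paper's argument.

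For item (1), however, there is a genuine gap at the point you yourself flag as ``delicate.'' You correctly reach the situation where, at the first $\rho_0$ with $K_e=-\tau^2$, the graph $\Sigma$ and the cone $C_\beta$ agree to second order, and you observe that the level curve $\{W(\cdot,\cdot,\beta^2)=0\}$ and the hyperbola $\{k_1k_2=-\tau^2\}$ are transversal at the common point $(\kappa_1^\ast,\kappa_2^\ast)$. But transversality in the $(k_1,k_2)$-plane is a first-order statement about those curves, whereas the inequality you need to reverse, $(G_\rho-\tilde G_\rho)+G(G_p-\tilde G_p)\le 0$, concerns how the \emph{parametrizations by $\rho$} of the two curvature trajectories compare at third order. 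Nothing in your sketch explains why the sign of $W_{k_1}/W_{k_2}$ versus $\kappa_2^\ast/\kappa_1^\ast$ controls that combination of partials of $G$ and $\tilde G$; one would need to unwind how $G$ is obtained implicitly from $W$ and from the formulas \eqref{res1}--\eqref{res3}, and it is not clear this goes through. (A smaller issue: your claim $N_p<0$ requires $u'(\rho_0)>0$, not merely $u'(\rho_0)\ne 0$; and ``shrinking $\delta$'' to force $-4+(\kappa-8\tau^2)\rho^2<0$ is not legitimate, since for $\kappa>8\tau^2$ the given disk may well contain points where this factor is non-negative.)

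The paper sidesteps this third-order analysis entirely by a \emph{two-point} comparison. Rather than working at a single local minimum of $\nu$, it assumes $\nu'>0$ somewhere, locates two nearby values $\rho_1<\rho_2$ with $\nu(\rho_1)=\nu(\rho_2)=\beta$, $\nu'(\rho_1)<0$, $\nu'(\rho_2)>0$, and places the cone $C_\beta$ tangent at both circles. The sign of $\nu'$ at each point tells you whether $\Sigma$ lies above or below $C_\beta$ there, hence $\kappa_i(\rho_1)\ge\kappa_i^c(\rho_1)$ and $\kappa_i(\rho_2)\le\kappa_i^c(\rho_2)$; the strict monotonicity of the cone's principal curvatures (Lemma~\ref{conlema}) then yields $\kappa_i(\rho_1)\ge\kappa_i(\rho_2)$ with at least one strict inequality, contradicting $W(\kappa_1,\kappa_2,\beta^2)=0$ at both points. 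This avoids any implicit differentiation of the PDE and uses only the ellipticity inequality $W_{k_1}W_{k_2}>0$ in the most direct way.
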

\begin{proof}
Let $\nu(\rho):[0,\delta)\flecha (0,1]$ be the angle function of the (upwards oriented) rotational graph $x_3=u(\rho)$, which will be denoted by $\Sigma$. Note that $\nu(0)=1$. To prove the first assertion it suffices to show that $\nu'(\rho)\leq 0$ for every $\rho$, and that $\nu'$ cannot vanish identically on a subinterval of $[0,\delta)$.  

We will prove first of all that $\nu$ cannot be constant in an interval. Suppose, on the contrary, that $\nu=\beta\in (0,1]$ in some $[a,b]\subset (0,\rho)$. If $\beta=1$, then by \eqref{eqan1} we have $\tau=0$ and $u'(\rho)=0$ in $[a,b]$. Since $x_3=u(\rho)$ solves $W(\kappa_1,\kappa_2,\nu^2)=0$, this implies that $W(0,0,1)=0$, and so, by uniqueness in Lemma \ref{empezar}, the function $u$ is constant on $[0,\delta)$. This contradicts our hypothesis. Thus, $\beta\in (0,1)$. Then, by equation \eqref{eqan1}, it is clear that $u$ restricted to $[a,b]$ is a piece of a cone-type surface \eqref{aches}. In particular, by the monotony properties of the principal curvatures of cone-type surfaces explained after Lemma \ref{conlema}, we have that, for $\rho\in [a,b]$,


\begin{enumerate}
\item
If $\tau=0$, then a principal curvature of $\Sigma$ is zero and the other one is strictly monotonic in terms of $\rho$.
 \item
If $\tau\neq 0$, then both principal curvatures are either strictly increasing or strictly decreasing in terms of $\rho$.
\end{enumerate}
This is a contradiction with the fact that $\Sigma$ has constant angle function on $[a,b]$ and satisfies an equation of the form $W(\kappa_1,\kappa_2,\nu^2)=0$, with $W_{k_1} W_{k_2} >0$ if $W(k_1,k_2)=0$.

Thus, $\nu$ cannot be a non-zero constant on an interval in $[0,\delta)$. We next prove that $\nu'\leq 0$, what would finish the proof of item (1) of Lemma \ref{monan}.

By the non-constancy of $\nu$ on any subinterval of $(0,\rho)$, and the condition $\nu(0)=1$, it is clear that there is some $\rho_1^*>0$ arbitrarily close to zero such that $\nu'(\rho_1^*)<0$. Arguing by contradiction with $\nu'\leq 0$, assume that there exists some $\rho_2^*\in (0,\delta)$ with $\nu'(\rho_2^*)>0$. Note that we may assume $\rho_1^*<\rho_2^*$. Thus, there exists $\varrho^*$ such that $\nu$ attains its minimum value in $[\rho_1^*,\rho_2^*]$ at $\varrho^*$. As $\nu'(\rho_1^*)<0$ and $\nu'(\rho_2^*)>0$, $\varrho^*$ lies in $(\rho_1^*,\rho_2^*)$, and $\nu'(\varrho^*)=0$. Also, since $\nu$ is not constant on any subinterval, by generic transversality we deduce the existence of numbers $\rho_1<\rho_2\in [\rho_1^*,\rho_2^*]$ arbitrarily close to $\varrho^*$ where $\nu(\rho_1)=\nu(\rho_2)$, and additionally $\nu'(\rho_1)<0$ and $\nu'(\rho_2)>0$.

By \eqref{eqan1}, the condition $\nu'(\rho)=0$ (resp. $>0, < 0$) is written as 
 \begin{equation}\label{eqan2}
u'(\rho) u''(\rho) = \frac{-16 \rho \tau^2 - 2\kappa \rho u'(\rho)^2 (4+\rho^2 \kappa)}{(4+\rho^2 \kappa)^2} \hspace{1cm} \text{(resp. $< \, , \,  >$)}.
 \end{equation}
By \eqref{eqan1} and \eqref{eqan2} we can easily deduce from the fact that $\nu$ has a local minimum at $\varrho^*$ that $u'(\varrho^*)\neq 0$. We will assume that $u'(\varrho^*)>0$ (and thus $u'(\rho_1)$ and $u'(\rho_2)$ can be chosen to be positive); if $u'(\varrho^*)<0$ the argument is analogous changing $\beta$ by $-\beta$ in what follows.

Define $\beta:=\nu(\rho_1)=\nu(\rho_2)\in (0,1)$, and let $C_{\beta}$ be the cone of angle $\beta$ given by \eqref{aches}. By comparing \eqref{eqan1} and \eqref{aches}, and recalling that $u'(\rho_1)$ and $u'(\rho_2)$ are both positive, we can deduce that the cone $C_{\beta}$ is well defined at $\rho=\rho_1$ and $\rho=\rho_2$. Let us choose an adequate integration constant $c\in \R$ in \eqref{aches} so that $u(\rho_1)=h(\rho_1)$; note that we additionally have $u'(\rho_1)=h'(\rho_1)$ by \eqref{eqan1}. 

Since $\nu'(\rho_1)< 0$ and the angle function of $C_{\beta}$ is constant, we deduce from \eqref{eqan2} that $u''(\rho_1) >h''(\rho_1)$, i.e. the graph $x_3=h(\rho)$ is \emph{below} the graph $x_3=u(\rho)$ at $q_1:=(\rho_1,0,u(\rho_1))$.  Thus, denoting by $\kappa_1\leq \kappa_2$ and $\kappa_1^c\leq \kappa_2^c$ the principal curvatures of $x_3 =u(\rho)$ and $x_3= h(\rho)$, we have
 \begin{equation}\label{coc1}
 \kappa_i (\rho_1) \geq \kappa_i^c (\rho_1), \hspace{1cm} i=1,2.
 \end{equation}

The same argument at $\rho=\rho_2$ using this time that $\nu'(\rho_2)> 0$ shows that, at $q_2:=(\rho_2,0,u(\rho_2))$, the graph $x_3=h(\rho)$ of $C_{\beta}$ is \emph{above} the graph $x_3=u(\rho)$ of $\Sigma$, and hence
  \begin{equation}\label{coc2}
 \kappa_i (\rho_2) \leq \kappa_i^c (\rho_2), \hspace{1cm} i=1,2.
 \end{equation} 
 
 By \eqref{coc1}, \eqref{coc2}, and the monotonicity of the principal curvatures of cone-type surfaces explained after Lemma \ref{conlema}, we conclude then that $\kappa_i(\rho_1)\geq \kappa_i (\rho_2)$ for $i=1,2$, and that at least one of these two inequalities is strict. Since $\nu(\rho_1)=\nu(\rho_2)$ we obtain a contradiction with the fact that $W(\kappa_1,\kappa_2,\nu^2)=0$ on $\Sigma$, for $W\in C^{\8} (\R^2)$ satisfying $W_{k_1} W_{k_2} >0$ on $W^{-1}(0)\subset \R^2$.  

This contradiction shows that $\nu'(\rho)\leq 0$ for every $\rho\in [0,\delta)$, and so, finally, that $\nu(\rho)$ is strictly monotonic, what proves the first assertion in Lemma \ref{monan}.

To prove assertion (2) of Lemma \ref{monan}, we first note that from \eqref{eqan2} and the fact that $\nu(\rho)$ is strictly decreasing we have for every $\rho\in (0,\delta)$
 \begin{equation}\label{eqan5}
 \frac{d}{d\rho} (u'(\rho)^2 (4+\kappa \rho^2)^2) \geq  -32\tau^2 \rho.
 \end{equation}
 Also by \eqref{eqan2}, the height $h(\rho)$ of any cone surface \eqref{aches} satisfies, in the domain of definition of $h(\rho)$ given by Lemma \ref{monan}, that
 \begin{equation}\label{eqan6}
 \frac{d}{d\rho} (h'(\rho)^2 (4+\kappa \rho^2)^2) = -32\tau^2 \rho.
 \end{equation}
From \eqref{eqan5} and \eqref{eqan6} we have 
 \begin{equation}\label{eqan7}
 \frac{d}{d\rho} (u'(\rho)^2 (4+\kappa \rho^2)^2) \geq  \frac{d}{d\rho} (h'(\rho)^2 (4+\kappa \rho^2)^2),
 \end{equation}
on any interval $(\rho_0-\varepsilon,\rho_0+\varepsilon)$ where both $u$ and $h$ are defined. Moreover, in case $u'(\rho_0)^2\geq h'(\rho_0)^2$, integrating \eqref{eqan7} for $\rho\geq \rho_0$ we obtain 
 \begin{equation}\label{eqan77}
 u'(\rho)^2 \geq h'(\rho)^2
 \end{equation}
for every $\rho\in [\rho_0,\varepsilon)$. 
 
 Assume next that there exists $\rho_0>0$ such that $u'(\rho_0)\neq 0$. Define $\beta:= \pm \nu(\rho_0)$, with the sign being the one of $u'(\rho_0)$. It can be proved as we did above that $h(\rho)$ in \eqref{aches} is well defined on $\rho_0$ for that particular value of $\beta$. Also, 
we have $u'(\rho_0)=h'(\rho_0)$ by \eqref{eqan1}.  As $h'(\rho)\neq 0$ for every $\rho$ and $h(\rho)$ is defined for every $\rho>0$ if $\tau=0$ (see Lemma \ref{conlema}), we conclude from \eqref{eqan77} that $u'(\rho)>0$ (resp. $u'(\rho)<0$) for every $\rho\in [\rho_0,\delta)$ if $u'(\rho_0)>0$ (resp. $u'(\rho_0)<0$).
 
Recalling that $\nu$ cannot be constant on any subinterval, this clearly implies that if $\tau=0$, then $u'(\rho)\neq 0$ on $(0,\delta)$. 
\end{proof}

\section{Proof of Theorem \ref{mainth}}\label{secepru}

\subsection{Extension properties of the canonical example}\label{sub:ext}

In this section we explain how to extend the radial graph of Lemma \ref{monan} to the inextendible \emph{canonical rotational example}, and we analyze its asymptotic geometry. That this radial graph exists for $\delta>0$ small enough follows from Lemma \ref{empezar}.

Let $\cW$ denote a general Weingarten class in $\Ek$, let $x_3=u(\rho)$, $\rho=\sqrt{x_1^2+x_2^2}$, be the radial graph of Lemma \ref{monan} associated to $\cW$, and let $F$ denote the defining function of the class $\cW$. Thus, $u$ is a radial solution to $F[u]=0$, and hence, a solution to \eqref{edoful}. As explained before Definition \ref{def:rote}, standard ODE theory ensures that we can uniquely extend $u(\rho)$ to a maximal solution to \eqref{edoful}. Let us be more specific about this matter.

Let us write $u(\rho):[0,L)\flecha \R$, and note that $u(0)=u'(0)=0$. Let us denote $L_{\8}:=\8$ if $\kappa\geq 0$, and $L_{\8}:=2/\sqrt{-\kappa}$ if $\kappa<0$, and observe that $L\leq L_{\8}$. Assume for the moment that $L<L_{\8}$. Suppose that there exists a sequence $\rho_n\to L$ such that $|u(\rho_n)|+|u'(\rho_n)|+|u''(\rho_n)|$ is uniformly bounded. Up to a subsequence, we may assume that $(\rho_n,u(\rho_n),u'(\rho_n),u''(\rho_n))$ converges to some $(L,u_0,p_0,r_0)\in \R^4$. Note that we clearly have $F(L,0,u_0,p_0,0,r_0,0,p_0/L)=0$ by continuity, so we conclude from the ellipticity of $F$ on $F^{-1}(0)$ that $F_r\neq 0$ at that point. This implies that we can view \eqref{edoful} around this point in normal form, i.e. as in \eqref{edonor1}. Since $u$ is a solution to \eqref{edoful}, a standard ODE argument proves then that $u$ can be extended to $[0,L+\delta)$ for some $\delta>0$. From here and standard continuation arguments, we conclude that one of the four situations below happen:

 \begin{enumerate}
\item
$L=L_{\8}$.
 \item
$L<L_{\8}$ and there exist $\rho_n\to L$ such that $|u(\rho_n)|\to \8$.
 \item
$L<L_{\8}$ and there exist $\rho_n\to L$ such that $|u'(\rho_n)|\to \8$.
\item
$L<L_{\8}$ and there exist $\rho_n\to L$ with $|u(\rho_n)|+|u'(\rho_n)|$ uniformly bounded, such $|u''(\rho_n)|\to \8$. \end{enumerate}

%
%
%

The first situation corresponds to the case where $x_3=u(\rho)$, $\rho=\sqrt{x_1^2+x_2^2}$, is an entire rotational graph in $\Rk$. In the fourth one, the norm of the second fundamental form of the canonical rotational example blows up as $\rho \to L$.


Assume next that $u(\rho)$ is in the conditions of the situations (2) or (3) above. Observe that (2) is a particular case of situation (3), by the mean value theorem. We will assume for definiteness that $u'(\rho_n)\to \8$, and will prove first of all that there exists $\lim_{\rho\to L} u'(\rho) =\8$; an analogous argument would prove that $\lim_{\rho\to L} u'(\rho) =-\8$ if $u'(\rho_n)\to -\8$.

Take $K>0$ arbitrarily large, and let $\beta>0$ be small enough so that $L<\frac{\sqrt{1-\beta^2}}{\beta |\tau|}$; if $\tau=0$, we may choose any $\beta>0$. This implies that $h(L)=:h_{\beta}(L)$ is well defined, where $h$ is given by \eqref{aches}. Also, $h_{\beta}'(L)\to \8$ as $\beta\to 0^+$, so by choosing a smaller $\beta>0$ if necessary we can also assume that $h_{\beta}'(L)>K$.

Take now $\rho^*\in (0,L)$ such that $h_{\beta}'(\rho)>K$ for every $\rho\in [\rho^*,L]$. Since $u'(\rho_n)\to \8$, there is some $\rho^0\in [\rho^*,L]$ with $u'(\rho^0)>h_{\beta}'(\rho^0)$. Then, \eqref{eqan7} holds around $\rho^0$. By \eqref{eqan77} we get $u'(\rho) \geq h_{\beta}'(\rho)\geq K$ for every $\rho \in (\rho^0,L)$. This proves that $\lim_{\rho\to L} u'(\rho) =\8$, as claimed.

Since $L<\8$, we see then by \eqref{eqan1} that the angle function $\nu(\rho)$ tends to zero and the tangent planes to the graph $x_3=u(\rho)$ become asymptotically vertical as $\rho\to L$. 

Define $\lim_{\rho\to L} u(\rho) =:u_0 \in \R\cup \{\8\}$, which exists as a consequence of the previous discussion, and let $\psi(u,\theta)=(\rho(u) \cos \theta, \rho(u) \sin \theta, u)$ be a parametrization of the graph $x_3=u(\rho)$ in $\Rk$. Note that $\rho(u)\to L$ and $\rho'(u)\to 0$ as $u\to u_0$. For $R_{\varepsilon}:= (u_0-\varepsilon,u_0)\times (-\varepsilon,\varepsilon)$ with $\varepsilon >0$ small enough, $\psi(R_{\varepsilon})$ is a graph $x_1=f(x_2,x_3)$. Then, on $R_{\varepsilon}$,
$$f_{x_2}  =-\frac{\sin \theta}{\cos \theta}, \hspace{0.3cm} f_{x_3}= \frac{\rho'}{\cos\theta} ,\hspace{0.3cm} f_{x_2x_2} = \frac{-1}{\rho \cos^3 \theta}, \hspace{0.3cm} f_{x_2 x_3} = \frac{\rho' \sin \theta}{\rho \cos^3 \theta},\hspace{0.3cm}$$ and $$ f_{x_3x_3} =\frac{\rho''}{\cos\theta} -\frac{\rho'^2 \sin^2 \theta}{\rho \cos^3 \theta},$$ with all derivatives of $f$ evaluated at $(x_2,x_3)=(\rho(u)\sin \theta, u)$.

Since $\cW$ is a general Weingarten class of surfaces, $f(x_2,x_3)$ satisfies the elliptic PDE 
 \begin{equation}\label{newt1}
 G(f,x_2,f_{x_2},f_{x_3},f_{x_2x_2},f_{x_2x_3},f_{x_3x_3})=0, \end{equation} 
where $G$ is the defining function of $\cW$. By making $\theta=0$ and using the previous formulas, this PDE turns into the ODE for $\rho=\rho(u)$ 
  \begin{equation}\label{newt2}
 G(\rho,0,0,\rho',\frac{-1}{\rho},0,\rho'')=0,
 \end{equation} 
where we have used that $f(u,0)=\rho(u)$ and we are writing $G=G(x,y,p,q,r,s,t)$. 

If $\rho''(u)$ is unbounded as $u\to u_0$, the norm of the second fundamental form of the graph $x_3=u(\rho)$ blows up as $\rho\to L$.

Assume next that $\rho''(u)$ is bounded as $u\to u_0$. Thus, there is a sequence $\hat{u}_n\to u_0$ such that $\rho''(\hat{u}_n)$ converges to some $t_0\in \R$. Since $G_t\neq 0$ on $G^{-1}(0)$, we may write equation \eqref{newt2} around the point $(L,0,0,0,-1/L,0,t_0)$ as an ODE in normal form
 \begin{equation}\label{edof}
\rho''=\cG(\rho,\rho')
 \end{equation}
for some smooth function $\cG$ defined on an open neighborhood of $(L,0)$ in $\R^2$. Let us point out that since $G$ satisfies the symmetric condition \eqref{simh}, the function $\cG$ satisfies $\cG(x,y)=\cG(x,-y)$. We distinguish now two cases:

\vspace{0.1cm}

{\bf Case 1:} $u_0\in \R$ (i.e. $u_0\neq \8$). In that case, $\rho(u)\in C^\8 ([0,u_0))$ extends $C^1$ to the value $u=u_0$, with $\rho(u_0)=L$ and $\rho'(u_0)=0$. By the uniqueness of the solution to the Cauchy problem for \eqref{edof} and the previously mentioned symmetry of $\cG(x,y)$, $\rho(u)$ extends smoothly across $u_0$ so that it is defined in $[0,2 u_0]$, following the symmetric condition $$\rho(u_0+u)=\rho(u_0-u)$$ for every $u\in [-u_0,0]$. This proves that the radial graph $x_3=u(\sqrt{x_1^2+x_2^2})$ we started with extends in this situation to an immersed rotational sphere in $\Ek$, which is an element of $\cW$. In other words, the canonical rotational example of $\cW$ is, in this case, a sphere $S$.

Moreover, $S$ is a rotational symmetric \emph{bi-graph}, in the sense that it can be decomposed as $S=S_1\cup S_2$ with $\parc S_1 =\parc S_2$, so that:

\begin{enumerate}
\item
Both $S_1,S_2$ are compact rotational graphs in $\Ek$ diffeomorphic to a closed disk, with the same rotation axis and the same boundary curve (the orbit of a point under the rotational group around the axis). In particular, both $S_1$ and $S_2$ are embedded, but the interiors of $S_1$ and $S_2$ might intersect if $\Ek$ is diffeomorphic to $\S^3$.
 \item
$S_1$ and $S_2$ are congruent; specifically, the $180º$-rotation around any horizontal geodesic in $\Ek$ orthogonal to the rotation axis and that passes through their common rotational boundary takes $S_1$ into $S_2$ and vice versa.
\end{enumerate}
Moreover, by Lemma \ref{monan}, the angle function $\nu(\rho)$ of $S_1$ is a strictly decreasing function with respect to $\rho$, and takes all values in $[0,1]$. In the same way, the angle function $\nu(\rho)$ for $S_2$ is strictly increasing with respect to $\rho$, and takes all values in $[-1,0]$. This shows that, after parametrizing the profile curve of $S$ in a regular way as a map $\alfa(t):[a,b]\flecha \Ek$, with $\alfa(a),\alfa(b)$ being the \emph{north} and \emph{south} poles of the sphere $S$, the angle function of $S$ can be viewed as a bijective map between $[a,b]$ and $[-1,1]$. 


\vspace{0.1cm}

{\bf Case 2:} $u_0=\8$. In this case, $\rho:[0,\8)\flecha \R$ satisfies that $\rho(u)\to L>0$ and $\rho'(u)\to 0$ as $u\to \8$. Thus, there exists some $K>0$ such that, for every $u>K$, $(\rho(u),\rho'(u))$ lies  in the domain of definition of the function $\cG(x,y)$ appearing in \eqref{edof}. In particular, $\rho(u)$ is a solution to \eqref{edof} for $u>K$.

Denote now
 $\rho_{\landa} (u):= \rho(u+\landa)$ for $\landa\in (K,\8)$. Clearly, each $\rho_{\landa}$ is also a solution to \eqref{edof} in some interval of the form $[-\ep_{\landa},\8)$, $\ep_{\landa}>0$, and $$\lim_{\landa\to \8} (\rho_{\landa} (0),\rho'_{\landa} (0))=(L,0).$$ By regularity of ODEs with respect to initial conditions, this shows that there exists $\lim_{\landa\to \8} \rho_{\landa}''(0)=\cG(L,0)$. This implies that $\rho''(u)\to 0$ as $u\to \8$ and that $\cG(L,0)=0$. Hence, the constant function $\rho\equiv L$ is a solution to \eqref{edof}, and $\rho_{\landa}(u)$ converges smoothly on compact sets to this constant $L$ as $\landa \to \8$, again by regularity of ODEs.
 
Geometrically, this means that the radial graph $x_3=u(\sqrt{x_1^2+x_2^2})$ we started with defines in this situation to a complete (non-entire) rotational graph $S$ that converges asymptotically in the $C^\8$ topology to a cylinder $x_1^2+x_2^2=R^2$ in $\Rk$. This cylinder corresponds to the lift $C=\pi^{-1}(\gamma)$ in $\Ek$ of a circle $\gamma$ in $\mathbb{M}^2(\kappa)$; we note that $C$ has the topology of a cylinder (resp. of a torus) if $\Ek$ is non-compact (resp. compact). In this situation, both $S$ and $C$ are elements of the general Weingarten class $\cW$, and $S$ is actually the canonical rotational example of $\cG$.

We summarize all the previous discussion in the following proposition:

\begin{proposition}\label{posrot}
Let $\cW$ be a general Weingarten class in $M=\Ek$, and let $S$ denote the canonical rotational example of $\cW$. Let $\Rk$ denote a canonical coordinate model for $M$, so that the rotation axis of $S$ is the $x_3$-axis in these $(x_1,x_2,x_3)$-coordinates. 

Then, one of the following four situations holds for $S$ in $\Rk$:
 \begin{enumerate}
 \item
$S$ is an entire graph in $\Rk$.
 \item
 $S$ is a rotational sphere contained in $\Rk$.
  \item
 $S$ is a proper rotational graph $x_3=u(x_1,x_2)$ over some bounded open disk $D_R=\{x_1^2+x_2^2 < R^2\}$, and it is smoothly asymptotic to the cylinder $x_1^2 + x_2^2 = R^2$ in $\Rk$.
   \item
$S$ is a rotational graph over a bounded open disk $D_R$ in $\Rk$, and its second fundamental form is unbounded.
 \end{enumerate}
\end{proposition}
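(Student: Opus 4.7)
The plan is to organize the analysis carried out in the discussion preceding the statement into the four-case format of Proposition \ref{posrot}. I start from the radial graph $x_3 = u(\rho)$, $\rho \in [0,\delta)$, produced by Lemma \ref{empezar}, which satisfies the ODE \eqref{edoful}, and extend it to its maximal interval of definition $[0, L)$ with $L \leq L_\8$. Since \eqref{edoful} can be put in normal form \eqref{edonor1} at every point of $F^{-1}(0)$ with $\rho > 0$ (using $F_r > 0$, which follows from ellipticity), a standard ODE continuation argument yields exactly four mutually exclusive alternatives at the right endpoint: either (a) $L = L_\8$, or $L < L_\8$ and along some sequence $\rho_n \to L$ one of $|u(\rho_n)|$, $|u'(\rho_n)|$, $|u''(\rho_n)|$ blows up. Alternative (a) is precisely item (1) of the proposition, and the blow-up of $|u''|$ with $u, u'$ bounded is exactly item (4).

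The bulk of the work is to analyze the two remaining alternatives. First I would reduce the blow-up of $|u|$ to a blow-up of $|u'|$: if $|u(\rho_n)|\to\8$, then the mean value theorem applied on $[0,\rho_n]$ furnishes a sequence along which $|u'|$ is unbounded. Then I would upgrade the ``sequential'' blow-up $u'(\rho_n)\to +\8$ (the case $-\8$ is symmetric) to a genuine limit $\lim_{\rho\to L} u'(\rho)=+\8$. For this I rely on the cone surfaces of Lemma \ref{conlema}: given $K>0$ large, choose $\beta>0$ small enough that $h_\beta$ is defined up to $\rho = L$ and $h_\beta'(L)>K$; as soon as $u'(\rho^0)>h_\beta'(\rho^0)$ at some $\rho^0$ close to $L$, the identity \eqref{eqan6} together with \eqref{eqan7} and the comparison \eqref{eqan77} force $u'(\rho)\geq h_\beta'(\rho)\geq K$ on $[\rho^0,L)$. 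Since $K$ is arbitrary, $u'(\rho)\to +\8$, and by \eqref{eqan1} the tangent planes of the graph become asymptotically vertical.

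Once the tangent planes are vertical as $\rho\to L$, I would rewrite $S$ locally near the asymptotic cylinder as a horizontal graph $x_1=f(x_2,x_3)$. This graph satisfies the elliptic PDE \eqref{newt1} with defining function $G$, and by rotational invariance (choosing $\theta = 0$) the PDE reduces to \eqref{newt2} for the profile $\rho=\rho(u)$. If $\rho''(u)$ is unbounded as $u \to u_0 := \lim_{\rho\to L} u(\rho)$, then the second fundamental form of $S$ blows up and we fall into item (4). Otherwise $\rho''$ is bounded near $u_0$, $G_t\neq 0$ at the relevant limit point by ellipticity, and we can put \eqref{newt2} in the normal form \eqref{edof}, $\rho''=\cG(\rho,\rho')$, with $\cG(x,y)=\cG(x,-y)$ coming from the horizontal symmetry \eqref{simh}.

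Finally, I would split according to whether $u_0\in\R$ or $u_0=\8$. If $u_0\in\R$, the profile extends $C^1$ to $u_0$ with $\rho(u_0)=L$ and $\rho'(u_0)=0$; the evenness of $\cG$ in the second variable together with Cauchy uniqueness for \eqref{edof} implies that $\rho$ extends as an even function across $u_0$, so $S$ closes up to an immersed rotational sphere, yielding item (2). If $u_0=\8$, I would exploit the translation invariance of \eqref{edof} in $u$: the shifts $\rho_\lambda(u):=\rho(u+\lambda)$ are solutions with $(\rho_\lambda(0),\rho_\lambda'(0))\to (L,0)$ as $\lambda\to\8$, and by continuous dependence on initial data they converge $C^\8$ on compact sets to the constant solution $\rho\equiv L$; this forces $\cG(L,0)=0$ and gives the asymptotic $C^\8$ convergence of $S$ to the vertical cylinder $\rho=L$, which is item (3). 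The main obstacle is the upgrade from sequential blow-up of $u'$ to a true limit, for which the cone comparison of Lemma \ref{conlema} is essential; once that is in hand, everything else is a careful bookkeeping of the ODE analysis already set up in the preceding pages.
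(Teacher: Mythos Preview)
Your proposal is correct and follows essentially the same approach as the paper: the four-case ODE continuation dichotomy, the reduction of $|u|$-blowup to $|u'|$-blowup via the mean value theorem, the upgrade from sequential to genuine blow-up of $u'$ via comparison with the cones of Lemma~\ref{conlema} through \eqref{eqan7}--\eqref{eqan77}, the switch to the horizontal graph governed by \eqref{newt2}--\eqref{edof}, and the final split on $u_0\in\R$ versus $u_0=\infty$ using the evenness of $\cG$ and translation invariance, respectively. The organization and the key technical point you single out (the cone comparison) match the paper's argument exactly.
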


\subsection{Proof of Theorem \ref{mainth} when $M=\Ek$, with $\kappa\leq 0$}\label{sec:r3}

Note that in this case, $M$ is diffeomorphic to $\R^3$, and in particular we can identify $\Ek=\Rk$. Let $\cW$ be a general Weingarten class of surfaces in $M$, and denote by $S$ its canonical rotational example. By hypothesis, the second fundamental form of $S$ is bounded. Thus, by Proposition \ref{posrot}, $S$ is either: $i)$ an entire rotational graph, $ii)$ a rotational sphere, or $iii)$ a complete, non-entire rotational graph $C^\8$-asymptotic to a vertical cylinder.

If $S$ is an entire rotational graph, it is immediate by the maximum principle and the invariance of the class $\cW$ by vertical translations of $M$ that there are no compact surfaces in the class $\cW$; in particular, there are no immersed spheres.

Assume now that $S$ is a rotational sphere. By our study in Subsection \ref{sub:ext}, we know that $S$ is an embedded symmetric bi-graph, and that its angle function, seen as a map defined in terms of a regular parameter of the profile curve of $S$, is bijective into $[-1,1]$. By Lemma \ref{gaussmap}, this means that the family $$\cS:=\{\Psi (S) : \Psi \in {\rm Iso}^0(M)\}$$ is a transitive family of surfaces in $M$. Recall that ${\rm Iso}^0(M)$ stands for the orientation preserving isometries of $M$ that also preserve the unit Killing field $\xi$.

Once here we can use the authors' previous work \cite{GM3}. Note that all elements of $\cS$ belong to the general Weingarten class $\cW$\, since $\cW$ is closed by orientation preserving ambient isometries. As we explained in Section \ref{sec:wedp}, any general Weingarten class of surfaces $\cW$ in $M$ is locally modeled by an (absolutely) elliptic PDE around each point in $M$ and each direction in the tangent bundle, once we fix coordinates in the space. In particular, $\cW$ is a \emph{class of surfaces modeled by an elliptic PDE} as introduced in Definition 2.3 of \cite{GM3}. 

In these conditions, we can use Theorem 2.4 in \cite{GM3} to deduce that if $\Sigma$ is an immersed sphere in $M$ that belongs to the general Weingarten class $\cW$, then $\Sigma$ is an element of $\cS$, i.e. $\Sigma$ differs by an ambient isometry $\Psi\in {\rm Iso}^0(M)$ of the canonical rotational sphere $S$. 

To finish, let us consider now the third possibility for the canonical rotational example $S$, i.e. the case that $S$ is a complete rotational graph that is $C^\8$-asymptotic to a rotational vertical cylinder. Let $C$ denote this rotational vertical cylinder in $\Ek$; note that the angle function of $C$ is identically zero. Let $S'$ denote the $180º$-rotation of $S$ around a horizontal geodesic of $\Ek$ passing through the origin; note that $S'$ is a downwards-oriented graph in $M$, asymptotic to $C$. Finally, let define the family $\cS$ as $$\cS=\{\Psi(S), \Psi(S'), \Psi(C) : \Psi\in {\rm Iso}^0(M)\}.$$ As in the previous case, every element of $\cS$ belongs to the general Weingarten class $\cW$, which is, as explained above, a class of surfaces modeled by an elliptic PDE. So, if we prove that $\cS$ is a transitive family of surfaces in $\Ek$, we can use again Theorem 2.4 in \cite{GM3} to deduce that any immersed sphere of the class $\cW$ is an element of the family $\cS$. As this time the family $\cS$ does not contain immersed spheres, this means that in the present case \emph{there are no immersed spheres in the class $\cW$}.

To prove that $\cS$ is a transitive family, we consider for every $\Sigma'\in \cS$ its Legendrian lift $\cL_{\Sigma'}$ that sends each $q\in \Sigma'$ to the pair $(q,N(q))\in TU(M)$, where $N(q)$ is the unit normal of $\Sigma'$ at $q$. Define the family of lifts $\cF:=\{\cL_{\Sigma'}: \Sigma'\in \cS\}$, all of which are regular surfaces in $TU(M)$. Note the following properties:

 \begin{enumerate}
 \item
\emph{For every $(p,w)\in TU(M)$ there exists some $\cL_{\Sigma'}\in \cF$ such that $(p,w)\in \cL_{\Sigma'}$.} Indeed, let $\nu_0\in [-1,1]$ denote the inclination of the tangent plane associated to $(p,w)$. Note that the angle function of $S$ takes all values in $(0,1]$, the angle function of $S'$ takes all values in $[-1,0)$, and the angle function of $C$ is zero. Thus, there is an element $\Sigma_0\in \{S,S',C\}$ whose angle function at some point $q$ is $\nu_0$. Lemma \ref{incli} implies then that there exists an isometry $\Psi\in {\rm Iso}^0(M)$ that takes $(q,N_{\Sigma_0}(q))$ to $(p,w)$. The claim follows then immediately.
 \item
\emph{If $\cL(\Sigma_1)=\cL(\Sigma_2)$ at some point $(p,v)\in TU(M)$ for some $\Sigma_1,\Sigma_2\in \cS$, then $\cL(\Sigma_1)=\cL(\Sigma_2)$ at every point.} Indeed, in these conditions there is a unique element $\Sigma_0\in \{S,S',C\}$ such that both $\Sigma_1,\Sigma_2$ are congruent to $\Sigma_0$ by an element of ${\rm Iso}^0(M)$ (since two different elements of $\{S,S',C\}$ never have the same angle function). If $\Sigma_0\neq C$, then we can use the monotonicity of its angle function given by Lemma \ref{monan} to prove by the same arguments used in Lemma \ref{gaussmap} that if the equality $\cL(\Sigma_1)=\cL(\Sigma_2)$ holds at one point, it must hold globally, as wished. Finally, if $\Sigma_0=C$, a similar argument shows that the condition $\cL(\Sigma_1)=\cL(\Sigma_2)$ at one point implies that $\Sigma_1$ and $\Sigma_2$ differ by a \emph{vertical} translation. But since $C$ is invariant by vertical translations, we obtain again that $\cL(\Sigma_1)=\cL(\Sigma_2)$ everywhere. This proves the claim.
 \end{enumerate}

Finally, note that as the surface $S$ converges asymptotically to $C$ with $C^\8$ regularity (and thus $S'$ also converges $C^\8$-smoothly to $C$), the family $\cS$ is a smooth family of surfaces in $M$. These properties together are enough to ensure that $\cS$ is, as desired, a transitive family of surfaces in $M$ according to Definition \ref{defitransi}.

By putting all this discussion together, we come then to the following conclusions if $M=\Ek$, with $\kappa\leq 0$:

\begin{enumerate}
\item
If the canonical rotational example $S$ of a general Weingarten class of surfaces $\cW$ in $\Ek$ is not compact, then there are no immersed spheres in the class $\cW$.
 \item
If the canonical rotational example $S$ is compact, then $S$ is up to ambient isometry the only immersed sphere in the class $\cW$.
 \item
By our previous study in Section \ref{sec:canonical} we know that if the canonical rotational example $S$ is compact, then $S$ is rotational and embedded, it is a bi-graph that is symmetric with respect to the $180º$-rotation around some horizontal geodesic, and its angle function is monotonic and surjective onto $[-1,1]$ with respect to any regular parametrization of the profile curve of $S$.
\end{enumerate}
These facts together prove Theorem \ref{mainth} in the case that $M=\Ek$ with $\kappa\leq 0$.

\subsection{Proof of Theorem \ref{mainth} when $M=\Ek$, $\kappa> 0$, $\tau=0$}\label{sec:s2xr}

In this case, $M=\S^2(\kappa)\times \R$. Let $\cW$ denote a general Weingarten class of surfaces in $M$, and let $S$ be its canonical rotational example, which by hypothesis has bounded second fundamental form. Assume that the rotation axis $L$ of $S$ is the $x_3$-axis for some canonical coordinate model $\Rk$ for $M$, and recall that this model recovers $(\S^2(\kappa)\setminus \{p\}) \times \R$, where $L^*\equiv \{p\}\times \R$ is the antipodal fiber in $M$ of the rotation axis $L$ of $S$.

Assume that $S$ is not an entire graph in $\Rk$, i.e. assume that $S$ remains at a positive distance in $M$ from $L^*$. Then, arguing as in Section \ref{sec:r3}, it follows that:
 \begin{enumerate}\item If $S$ is a rotational sphere, then $S$ is an embedded bi-graph in $M$ and any other immersed sphere of the general Weingarten class $\cW$ is congruent to $S$.
  \item If $S$ is not a sphere, then there are no immersed spheres within the class $\cW$. 
  \end{enumerate}
Thus, Theorem \ref{mainth} holds in that case.
 
Next, suppose that $S$ is an entire graph in $\Rk$. If the angle function $\nu$ of $S$ is constant, then $S$ is a slice $\S^2\times \{t_0\}$, and it is immediate by the maximum principle that $S$ is (up to vertical translation) the only compact immersed surface in $M$ that belongs to the general Weingarten class $\cW$. Thus, all conclusions of Theorem \ref{mainth} hold in this case.

Finally, assume that the angle function $\nu$  of the entire graph $S$ is not constant, and let $s$ denote the parameter for the profile curve of $S$ defined before \eqref{metpv}.
By Lemma \ref{monan} and \eqref{res5}, it follows that $\rho'(0)=\nu(0)=1$ and that $\rho'(s)$ is strictly decreasing, with $\rho'(s)>0$ if $s>0$. Since $S$ is an entire graph, we see that $\rho(s)\to \8$. From these conditions it is easy to see from \eqref{resH} and \eqref{res6} that the norm of the second fundamental form of $S$, given for $\rho=\rho(s)$ by
$$|\sigma|^2=4H^2-2K_e = \frac{(4-\kappa\rho^2)^2(1-\rho'(s)^2)^2+\rho''(s)^2 \rho^2 (4+\kappa \rho^2)^2 }{16 \rho^2 (1-\rho'(s)^2)},$$  
blows up as $\rho \to \8$. This contradicts our hypothesis, what concludes the proof of Theorem \ref{mainth} when $M=\S^2(\kappa)\times \R$.

\subsection{Proof of Theorem \ref{mainth} when $M=\Ek$, $\kappa> 0$, $\tau\neq 0$}\label{sec:s3}

In this case, $M$ is diffeomorphic to $\S^3$. Let $\cW$ be a general Weingarten class in $M$, and let $\pi:M\flecha \mathbb{S}^2(\kappa)$ denote the canonical fibration. Let $S$ be the canonical rotational example in $M$ of the class $\cW$, and let $L:=\pi^{-1}(p)$ and $L^*:=\pi^{-1}(p^*)$ denote, respectively, the rotation axis of $S$ and its antipodal fiber.

Consider next a canonical coordinate system $\Rk$ in $M$ so that, in these $(x_1,x_2,x_3)$-coordinates, the $x_3$-axis corresponds to the universal covering of $L$ and $\pi(x_1,x_2,x_3)=(x_1,x_2)$. Recall that this model is not global; specifically, $\Rk$ can be identified with the Riemannian universal covering of $M\setminus L^*$. We refer to Appendix 2 for the details on this and other particularities of the geometry of $M$.

By our analysis in Subsection \ref{sub:ext}, the rotational surface $S$ in $M$ corresponds in this $\Rk$ model to a rotational surface $\hat{S}\subset \Rk$ with rotation axis the $x_3$-axis, and for which one of the following three possibilities hold (recall that, by hypothesis, the second fundamental form of $S$ is bounded):

\begin{enumerate}
\item
$\hat{S}$ is an entire radial graph in $\Rk$.
\item
$\hat{S}$ is an embedded rotational sphere in $\Rk$.
 \item
$\hat{S}$ is a radial graph in $\Rk$ defined on a disk $x_1^2+x_2^2<R^2$, and that converges $C^\8$-asymptotically to the circular cylinder $x_1^2+x_2^2 =R^2$ in $\Rk$.
\end{enumerate}

In cases (2) and (3), the arguments in Section \ref{sec:r3} still work. Specifically, these arguments together with Lemma \ref{gaussmap} prove that:
 \begin{enumerate}
 \item[i)]
In case (2) above, $S$ is a rotational sphere with strictly monotonic angle function; in particular, the family $\{\Psi(S): \Psi\in {\rm Iso}^0(M)\}$ is a transitive family of surfaces in $M$.
 \item[ii)]
In case (3) above, if we denote by $\cS^*$ the family composed by $S$, by its $180º$-rotation $S'$ with respect to some horizontal geodesic of $M$, and by the rotational surface $\pi^{-1}(\gamma)$ tangent to the vertical Killing field $\xi$ to which $S$ converges asymptotically, then $\cS:=\{\Psi(\cS^*) : \Psi\in {\rm Iso}^0(M)\}$ is again a transitive family of surfaces in $M$. Note that, in this case, $\pi^{-1}(\gamma)$ is a torus (since the fibers of $\pi$ are diffeomorphic to $\S^1$), and in the $\Rk$ model for $M$, it corresponds to some cylinder $x_1^2+x_2^2=R^2$.
  \end{enumerate}
In particular, we can deduce as in Section \ref{sec:r3} that in case (2) any immersed sphere of the general Weingarten class $\cW$ is congruent to $S$, and that in case (3) there are no immersed spheres in $\cW$.

Finally, assume that we are in case (1) above. Hence, $\hat{S}$ is an entire rotational graph in $\Rk$, which can be parametrized as $\hat{\psi}(\rho,\theta)=(\rho \cos \theta, \rho \sin \theta, h(\rho))$, for all values of $\rho>0$, and with $h(0)=h'(0)=0$. In particular, note that $S$ approaches the antipodal fiber $L^*$ of its rotation axis $L$.

\emph{For simplicity in the computations, we will assume from now on without loss of generality that the constant $\kappa>0$ is actually $\kappa=4$; this corresponds to a rescaling of the metric on $M$}.

In order to understand the behavior of $S$ as it approaches $L^*$, we consider a different coordinate system $(y_1,y_2,y_3)$ on $M$, via stereographic projection $\pi_0 : M\equiv \S^3\setminus\{p_N\} \flecha \R^3$ from a \emph{north pole} point $p_N\in L$. See equation \eqref{newc} in Appendix 2 for an explicit description of these coordinates.

Let $S_0$ denote the open piece of $S$ that lifts into $\hat{S}$. That is, $S_0=\Psi(\hat{S})$, where $$\Psi:\Rk\flecha M\setminus L^*\equiv (\S^3\setminus L^*,g)$$ is given by formula \eqref{io} in Appendix 2. Then, in these $(y_1,y_2,y_3)$-coordinates, we have by \eqref{af2} the following parametrization for $S_0$:
\begin{equation}\label{ber1}
\varphi(\rho,\theta)= \frac{1}{\sqrt{1+\rho^2}-\sin(h(\rho)/\tau)} \left(\rho \cos(\theta+h(\rho)/\tau), \rho \sin (\theta + h(\rho)/\tau ), \cos(h(\rho)/\tau)\right).
\end{equation}
Note that $\varphi([0,\8)\times [0,2\pi))$ is a rotational surface $\Sigma_0$ in $\R^3$, with profile curve $\gamma:[0,\8)\flecha \R^2$ given by \eqref{af3}, i.e.
 \begin{equation}\label{ber2}
 \gamma(\rho):=(\alfa(\rho),\beta(\rho)) = \frac{1}{\sqrt{1+\rho^2}-\sin(h(\rho)/\tau)} \left(\rho, \cos(h(\rho)/\tau)\right). \end{equation} 
Note that $\gamma(0)=(0,1)$, since $h(0)=0$. Clearly, $\gamma(\rho)\to (1,0)$ as $\rho\to \8$. Since by hypothesis $S$ has bounded second fundamental form in $M$, we have that $\gamma$ has bounded (Euclidean) curvature as a planar curve. We briefly comment now two elementary facts about regular planar curves of bounded curvature:

\vspace{0.2cm}

{\bf Fact 1:} \emph{A complete regular arc $\gamma:[0,\8)\flecha \R^2$ with bounded curvature $|\kappa_{\gamma}|\leq C<\8$ cannot converge to a point $q\in \R^2$.}

\vspace{0.1cm} \emph{Sketch of proof:} It follows from the also well-known fact that if a complete planar curve $\gamma$ has bounded curvature, then there exists a fixed $\ep_0>0$ (that only depends on the upper bound for $|\kappa_{\gamma}|$) such that, for each $p\in \gamma$, the curve $\gamma$ can be seen locally around $p$ as a graph of a function with gradient bounded in absolute value by $1$, over an interval of length $\ep_0$ of its tangent line at $p$.

\vspace{0.2cm}

{\bf Fact 2:} \emph{Let $\gamma: [0,L)\flecha \R^2$ be a planar curve of bounded curvature, parametrized by arc length, with $\lim \gamma(s) = p\in \R^2$ as $s\to L$. Then $\gamma(s)$ extends $C^1$-smoothly to $s=L$.}

\vspace{0.1cm} \emph{Sketch of proof:} Let $\vartheta(s)$ denote the angle between $\gamma'(s)$ and a fixed unit vector in $\R^2$. If $\gamma(s)$ does not extend $C^1$ to $s=L$, there are sequences $s_n,s_n^* \to L$ such that $\vartheta (s_n) \to \vartheta_1$ and $\vartheta(s_n^*)\to \vartheta_2$, with $\vartheta_2-\vartheta_1 \neq 0$. This implies by the mean value theorem that $\vartheta'(\bar{s_n})\to \8$ for a sequence $\bar{s_n}\to L$, which contradicts that $\gamma$ has bounded curvature, since $\kappa_{\gamma}(s)=\vartheta'(s)$.

\vspace{0.2cm}

We go back next to our situation regarding the regular planar curve $\gamma=\gamma(\rho)$ in \eqref{ber2}. The previous two facts ensure that $\gamma$ has finite length, and a well defined limit tangent direction as $\rho\to \8$. Let us compute next this tangent direction.

First, we note that the function $h(\rho)$ is bounded (otherwise, by \eqref{ber2}, the curve $\gamma$ winds infinitely around $(1,0)$ as $\rho\to \8$, with contradicts existence of a limit tangent direction). Thus, there exists a sequence $\rho_n\to \8$ such that $h(\rho_n)\to a\in \R$ and $h'(\rho_n)\to 0$, for some $a\in \R$. 
In these conditions, a computation from \eqref{ber2} shows that the desired limit tangent direction is given by
\begin{equation}\label{limm}
\lim_n \frac{\gamma'(\rho_n)}{|\gamma'(\rho_n)|} = \left(-\sin(a/\tau),-\cos (a/\tau) \right)=:v_a.
 \end{equation}

Consider next the rotational surface $S^*$ in $\Rk$ given by 
 \begin{equation}\label{chista}
\psi^*(\rho,\theta)=(\rho \cos \theta, \rho \sin \theta, h^*(\rho)), \hspace{1cm} h^*(\rho):= -h(\rho) +\pi \tau + 2 a,
 \end{equation}
i.e. $S^*$ is the $180º$-rotation of $\hat{S}$ around the $x_1$-axis, composed with a specific vertical translation. In particular, if $\Psi:\Rk\flecha M\setminus L^*$ is the local isometry given by \eqref{io}, it follows that $S_0:=\Psi(\hat{S})$ and $S_1:=\Psi(S^*)$ differ by an ambient isometry in $M$. Also, note that both $S_0$ and $S_1$ are rotational graphical disks in $M$ with common boundary equal to the axis $L^*$. We prove next:

\vspace{0.2cm}

{\bf Claim:} \emph{$S_0\cup L^*\cup S_1$ is a smooth rotational sphere in $M$. Thus, it is equal to the canonical rotational surface $S$.}

\vspace{0.1cm} \emph{Proof of the claim:} To prove the claim, it suffices to check that $S_0$ and $S_1$ are glued together smoothly along $L^*$ in $M$. 

First, note that if we apply the same arguments above but changing $h(\rho)$ by the function $h^*(\rho)$ in \eqref{chista}, we conclude that the surface $S_1$ can be written in the $(y_1,y_2,y_3)$-coordinates as a rotational surface $\Sigma_1$ with a profile curve $\gamma^*$, given by \eqref{ber1}, \eqref{ber2} changing $h(\rho)$ by $h^*(\rho)$, respectively. Note that $\gamma^*(\rho)\to (1,0)$ as $\rho\to \8$. A calculation similar to the one in \eqref{limm} show that the unit limit tangent vector at $\gamma^*(\8)=(1,0)$ is given by $v_a^*:=-v_a$. Therefore, the curves $\gamma$ and $\gamma^*$ can be joined $C^1$-smoothly at the point $(1,0)$. This means that their associated rotational surfaces $\Sigma_0,\Sigma_1$ in the $(y_1,y_2,y_3)$-coordinates can also be joined $C^1$-smoothly around the circle $\{(y_1,y_2,y_3): y_1^2+y_2^2=1, y_3=0\}$.

Moreover, note that both $\Sigma_0,\Sigma_1$ are solutions to the same general Weingarten equation. Thus, using computations similar to those carried out after \eqref{newt1}, their profile curves $\gamma,\gamma^*$ are solutions to the same second order ODE, when parametrized as graphs over their common tangent line. This implies that the curve $\gamma\cup \gamma^*\cup \{(1,0)\}$ is smooth at the point $(1,0)$. From here, the statement of the Claim trivially follows.

\vspace{0.2cm}

Therefore, the canonical rotational example $S$ in $\Ek$ is a smooth immersed rotational sphere. which is also a symmetric bi-graph. Moreover, by construction, the angle function of $S$ is strictly monotonic as a function of the parameter of the profile curve of $S$, and takes all values in $[-1,1]$. Thus, by
Lemma \ref{gaussmap}, the family $\{\Psi(S): \Psi\in {\rm Iso}^0(M)\}$ is a transitive family of surfaces in $M$, and we conclude as in previous sections that any immersed sphere in the class $\cW$ is equal to $S$, up to ambient isometry. This concludes the proof of Theorem \ref{mainth}.

\section{General Weingarten spheres with bounded mean curvature}\label{sec6}

The aim of this section is to use Theorem \ref{mainth} in order to prove the theorem below.

\begin{theorem}\label{bomi}

Let $\Sigma$ be an immersed sphere in $M=\Ek$ that satisfies a general Weingarten equation 
 \begin{equation}\label{weie}
 H=\Phi(H^2-K_e,\nu^2),
 \end{equation}
where $\Phi\in C^{\8}([0,\8)\times [0,1])$ verifies $a<\Phi<b$ for positive constants $a,b$.

Then, $\Sigma$ is a rotational sphere in $M$.
\end{theorem}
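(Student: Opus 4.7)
The plan is to derive Theorem \ref{bomi} from Theorem \ref{mainth}: the only hypothesis of Theorem \ref{mainth} that requires verification is that the canonical rotational example $S$ of the class $\cW$ defined by \eqref{weie} has bounded second fundamental form. Once this is established, the assumption that $\Sigma\in\cW$ is an immersed sphere forces, via item (2) of Theorem \ref{mainth}, that $S$ itself is compact, and then item (1) gives that $\Sigma$ is congruent to $S$; in particular, $\Sigma$ is a rotational sphere.

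According to Proposition \ref{posrot}, $S$ falls into exactly one of four cases. In cases (1), (2) and (3)---an entire graph, a rotational sphere, and a graph $C^\infty$-asymptotic to a vertical cylinder $\pi^{-1}(\gamma)$---the boundedness of $|A|$ is immediate, using in case (3) that such a vertical cylinder is itself a rotational surface in $\Ek$ with constant (hence bounded) principal curvatures, and that the $C^\infty$-convergence near infinity transfers this bound to $S$. The task therefore reduces to ruling out case (4), where $S$ is a radial graph over a bounded disk and $|A|$ is unbounded.

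To exclude case (4), I would parametrize the profile curve of $S$ by arclength $s$ in the induced metric \eqref{metpv} and inspect the explicit formulas \eqref{resH} and \eqref{res6}, which display $H$ and $K_e$ as affine functions of $\rho''(s)$:
\begin{equation*}
H=A(\rho,\rho')-B(\rho,\rho')\,\rho''(s),\qquad K_e=C(\rho)+D(\rho)\,\rho''(s),
\end{equation*}
with $B(\rho,\rho')=(4+\kappa\rho^2)(1+\tau^2\rho^2)/\bigl[8\sqrt{1-\rho'^2(1+\tau^2\rho^2)}\bigr]$ strictly positive for $\rho>0$. The Weingarten equation together with $H=\Phi<b$ gives $|\rho''(s)|=|A-H|/B\le (|A|+b)/B$. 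On any region where $\rho$ is bounded above and away from $0$, this quantity stays uniformly bounded: the only point to check is the limit locus $\rho'^2(1+\tau^2\rho^2)\to 1$, where both $|A|$ and $B$ blow up; however, a direct substitution of $\rho'^2=1/(1+\tau^2\rho^2)$ into \eqref{resH} shows $|A|/B\to \rho\tau^2/(1+\tau^2\rho^2)^2$, a finite quantity for bounded $\rho$. Thus $\rho''(s)$ is bounded on $S$ away from its axis point, and substituting into \eqref{res6} makes $K_e$ bounded, whence $|A|^2=4H^2-2K_e$ is bounded. Smoothness of $S$ at the axis (guaranteed by Lemma \ref{empezar}) handles the remaining point, giving a global bound on $|A|$ that contradicts case (4).

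The main technical subtlety I anticipate is the clean verification that $|A|/B$ stays bounded on the degenerate locus $\rho'^2(1+\tau^2\rho^2)=1$; this reduces to a short algebraic cancellation in the numerator of \eqref{resH}. Once this is in hand, boundedness of $\rho''(s)$, then of $K_e$, then of $|A|$, follows mechanically, and Theorem \ref{mainth} concludes the proof.
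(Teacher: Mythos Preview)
Your strategy of verifying the bounded-$|A|$ hypothesis of Theorem~\ref{mainth} case by case via Proposition~\ref{posrot} is essentially the paper's approach, and your treatment of cases (2), (3) and (4) is sound; in particular, your affine decomposition $H=A-B\rho''$ from \eqref{resH} and the check that $|A|/B$ stays finite as $\rho'^2(1+\tau^2\rho^2)\to 1$ is exactly the computation underlying the paper's bound on $\rho''(s)$.

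There is, however, a genuine gap in your handling of case (1). An entire radial graph in $\Rk$ does \emph{not} have bounded second fundamental form ``immediately'': when $\kappa>0$ the coordinate $\rho$ is unbounded, and when $\kappa<0$ the factor $4+\kappa\rho^2$ can degenerate to $0$ at the ideal boundary, so none of the ingredients in your $\rho''$-estimate are controlled. (Indeed, Section~\ref{sec:s2xr} exhibits entire rotational graphs in $\S^2(\kappa)\times\R$ whose $|\sigma|$ blows up.) The paper deals with this in two steps. For $\kappa\le 0$ it bypasses Theorem~\ref{mainth} altogether: an entire graph in $M\cong\R^3$ forces, by the maximum principle and vertical-translation invariance of $\cW$, that no compact element of $\cW$ exists, so the conclusion is vacuous. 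For $\kappa>0$ it proves a separate Claim: using $\Phi>a>0$, one compares the defining PDE of $\cW$ with that of the CMC equation $H=a$, whose canonical example is the known rotational sphere $S_a$ of finite radius $\rho_0$; the comparison principle then forces $L\le\rho_0<\infty$, so case (1) cannot occur. You need one of these two arguments (or an equivalent) to close the gap; once $L<\infty$ (and $L<2/\sqrt{-\kappa}$ when $\kappa<0$) is secured, your case-(4) computation applies uniformly and finishes the proof.
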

\begin{proof}
Let $S$ denote the canonical rotational example in $M$ associated to \eqref{weie}. If $S$ is a rotational sphere, the result follows from Theorem \ref{mainth}. So, we will assume from now on that $S$ is not a rotational sphere. By Proposition \ref{posrot}, we can write $S$ with respect to a canonical coordinate system $\Rk$ for $M$ as a radial graph $x_3=u(\rho)$, $\rho:=\sqrt{x_1^2+x_2^2}$, where the function $u$ is smooth in some interval $[0,L)$, with $L\leq 2/\sqrt{-\kappa}$ in case $\kappa<0$ (by definition of the model $\Rk$).

We start by proving:

\vspace{0.2cm}

\noindent {\bf Claim:} \emph{If $\kappa>0$, then $L<\8$}.

\vspace{0.1cm}

{\it Proof of the Claim:} Let $a>0$ be as in the statement, and let $S_a$ denote the unique (up to vertical translations) rotational sphere in $M$ with rotation axis corresponding to the $x_3$-axis in $\Rk$, and with constant mean curvature $H=a$. It is well known that $S_a$ is contained in $\Rk$, since $a>0$, and is an embedded bi-graph in this $\Rk$ model (although it might not be embedded in $M$ if $\tau \neq 0$).

Let $x_3=h_{a}(\rho)$, $\rho:=\sqrt{x_1^2+x_2^2}$, be the graph in $\Rk$ that defines the \emph{lower hemisphere} of $S_a$, defined on a disk $D(0,\rho_0)$ of radius $\rho_0>0$, and recall that $S$ is given by the radial graph $x_3=h(\rho)$. Note that $h_a'(\rho)\to \8$ as $\rho \to \rho_0$.

Since $a<\Phi$, we have 
 \begin{equation}\label{condi1}
F_a > F,
\end{equation} 
where $F_a=F_a(x,y,z,p,q,r,s,t)$ (resp. $F=F(x,y,z,p,q,r,s,t)$) is the defining function of the class of surfaces given by $H=a$ (resp. of the class of surfaces defined by \eqref{weie}); see Definition \ref{defifu}.

Assume that $h$ is defined on some larger disk $\D(0,\rho_0+\delta)$. Note that for any 
$c\in \R$, we have $F[h +c]=0=F_a[h_a]$, so by \eqref{condi1} we see that $F_a[h + c]> F_a[h_a]$ on $\D(0,\rho_0)$. Let us choose $c$ such that $h+c=h_a$ in $\parc \D(0,\rho_0)$. Then, by the comparison principle (see e.g. \cite[pg. 443]{GT}), we have $h+c\leq h_a$ on $\D(0,\rho_0)$. This implies that $h'(\rho)\to \8$ as $\rho\to \rho_0$, what contradicts the assumption that $h$ is defined on $\D(0,\rho_0+\delta)$. Thus, $h$ is defined at most on $\D(0,\rho_0)$, i.e. $L\leq \rho_0<\8$. This completes the proof of the Claim.

\vspace{0.2cm}

We now complete the proof of Theorem \ref{bomi}. If $L=2/\sqrt{-\kappa}$ when $\kappa<0$ (resp. $L=\8$ when $\kappa=0$), then $S$ is an entire graph in $M=\Ek$, which by the condition $\kappa\leq 0$ is diffeomorphic to $\R^3$. In this situation, the result is trivial, since by the maximum principle there are no compact surfaces immersed in $M$ that satisfy \eqref{weie}; see e.g. the second paragraph in Section \ref{sec:r3}.

So, from now on \emph{we will assume that $L<2/\sqrt{-\kappa}$ if $\kappa<0$, and that $L<\8$ if $\kappa=0$}. By the Claim above, we then have $L<\8$ in any $M=\Ek$.

Let $(\rho(s),0,h(s))$ denote the profile curve of $S$, parametrized with respect to the parameter $s$ defined before \eqref{metpv}. Thus, equations \eqref{res5}, \eqref{resH} and \eqref{res6} hold for $\rho(s)$, and we have the initial conditions $\rho(0)=h(0)=0$, $h'(0)=0$, $\rho'(0)=1$. Since $L<2/\sqrt{-\kappa}$ if $\kappa<0$, we see that $4+\kappa \rho(s)^2$ is greater than some positive constant, and since $L<\8$, we see that $\rho(s)$ is bounded.

As $S$ satisfies \eqref{weie}, and $a<\Phi<b$, it follows that the mean curvature of $S$ is bounded. Thus, using \eqref{resH} together with the previous boundedness properties of $\rho(s)$ and $4+\kappa \rho(s)^2$, we deduce that $\rho''(s)$ is bounded. But this implies by \eqref{res6} that the extrinsic curvature $K_e$ of $S$ is also bounded (note that $K_e$ is trivially bounded around $\rho(0)=0$, since $S$ meets its rotation axis smoothly). As a result, $S$ has bounded second fundamental form. The result follows then from Theorem \ref{mainth}, what completes the proof of Theorem \ref{bomi}.
\end{proof}

\begin{corollary}\label{prh}
Let $\Phi\in C^{\8}([0,1])$, $\Phi>0$. Then any immersed sphere in $M=\Ek$ whose mean curvature $H$ and angle function $\nu$ satisfy $$H=\Phi(\nu^2)$$ is a sphere of revolution in $M$.
\end{corollary}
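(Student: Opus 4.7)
The strategy is to deduce this corollary directly from Theorem \ref{bomi} by observing that $H=\Phi(\nu^2)$ is a degenerate special case of the general Weingarten equation \eqref{weie}, namely the one in which the prescribing function has no dependence on the principal curvature invariant $t=H^2-K_e$.

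First I would cast the equation in the form required by Theorem \ref{bomi}. Define $\widetilde{\Phi}\in C^{\infty}([0,\infty)\times[0,1])$ by $\widetilde{\Phi}(t,v):=\Phi(v)$; then $\Sigma$ satisfies $H=\widetilde{\Phi}(H^2-K_e,\nu^2)$. Since $\partial\widetilde{\Phi}/\partial t\equiv 0$, the ellipticity condition $4t(\partial\widetilde{\Phi}/\partial t)^2<1$ in \eqref{gwi2} is automatic, so $\widetilde{\Phi}$ defines a valid general Weingarten class in the sense of Definition \ref{prescri}.

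Next I would verify the two-sided bound on $\widetilde{\Phi}$. Because $\Phi\in C^{\infty}([0,1])$ is continuous on the compact interval $[0,1]$ and strictly positive, it attains a positive minimum $m:=\min_{[0,1]}\Phi>0$ and a finite maximum $M:=\max_{[0,1]}\Phi<\infty$. Choosing $a\in(0,m)$ and $b>M$, we obtain $a<\widetilde{\Phi}(t,v)<b$ on all of $[0,\infty)\times[0,1]$, which is exactly the hypothesis of Theorem \ref{bomi}.

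Applying Theorem \ref{bomi} to $\Sigma$ with this $\widetilde{\Phi}$ yields that $\Sigma$ is a rotational sphere in $M$, which is the claim. The only thing that could conceivably go wrong is the ellipticity issue, but it trivializes here since $\widetilde{\Phi}$ is independent of $t$; hence no real obstacle arises, and the argument is simply a matter of packaging the hypotheses so as to invoke Theorem \ref{bomi}.
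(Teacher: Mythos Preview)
Your proposal is correct and matches the paper's approach exactly: the paper states Corollary \ref{prh} without a separate proof, treating it as an immediate consequence of Theorem \ref{bomi}, and your argument spells out precisely the routine verification needed (defining $\widetilde{\Phi}(t,v)=\Phi(v)$, noting the ellipticity condition is vacuous, and using compactness of $[0,1]$ plus $\Phi>0$ to get the two-sided bound).
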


Corollary \ref{prh} clearly contains (for $\Phi$ constant) the Abresch-Rosenberg classification of constant mean curvature spheres in $\Ek$ spaces (Theorem \ref{arth}), except for the case $H=0$ when $\kappa>0$. However, this particular case can also be recovered as a particular corollary of Theorem \ref{mainth}. Indeed, the canonical rotational example $S$ for $H=0$ in $\S^2(\kappa)\times \R$ is a totally geodesic slice $\S^2(\kappa)\times \{t_0\}$, while for $H=0$ in a Berger sphere $\Ek =(\S^3,g)$, $S$ is an \emph{equatorial sphere}. In any of these two cases, $S$ is compact, and the result follows from Theorem \ref{mainth}.

\section{Solution to Minkowski-type problems in $\Ek$}\label{sec7}

The classical Minkowski problem asks, given $\mathcal{K}\in C^{\8}(\S^n)$, $\mathcal{K}>0$, to determine existence and uniqueness of a compact hypersurface $S\subset \R^{n+1}$ such that its extrinsic curvature $K_e$ (i.e. the product of its principal curvatures) is given  by $K_e = \mathcal{K} \circ \eta$, where $\eta:S\flecha \S^n$ is the Gauss map of $S$. In the case that $\mathcal{K}$ is an even function that is rotationally symmetric with respect to the $x_{n+1}$-axis, this equation is written as
 \begin{equation}\label{mi1}
 K_e = \Phi(\nu^2)>0,
 \end{equation}  
where $\nu=\esiz \eta,e_{n+1}\esde$ is the angle function of $S$ in the vertical direction, and $\Phi\in C^{\8}([0,1])$.

Equation \eqref{mi1} also makes sense in any homogeneous $\Ek$ space, what leads to the consideration of a natural Minkowski-type problem in this context. Note that \eqref{mi1} can be seen as a general elliptic Weingarten equation, and so Theorem \ref{mainth} can be applied to it. Our objective in this section will be to prove existence, uniqueness and rotational symmetry of solutions to \eqref{mi1} in any $\Ek$ space.

\subsection{Classification of immersed spheres of constant positive extrinsic curvature}\label{kec}

The classical Liebmann theorem states that any immersed sphere of constant positive extrinsic curvature in $\R^3$, $\H^3$ or $\S^3$ is a round sphere. Liebmann's theorem was extended by Espinar, Gálvez and Rosenberg \cite{EGR} to $\H^2\times \R$ and $\S^2\times \R$. In the theorem below we extend this result to any rotationally symmetric homogeneous three-manifold $\Ek$.

\begin{theorem}\label{posex}
For every $c>0$ there exists a rotational sphere $S$ in $M=\Ek$ with constant extrinsic curvature $K_e=c$ (embedded if $M$ is not compact), and any other immersed sphere with $K_e=c$ in $M$ is equal to $S$, up to ambient isometry.

In particular, any immersed sphere of constant positive extrinsic curvature in $M$ is a rotational sphere.
\end{theorem}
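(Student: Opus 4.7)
The plan is to deduce Theorem \ref{posex} directly from Theorem \ref{mainth} by showing that $K_e=c$ is a bona fide general Weingarten equation whose canonical rotational example $S$ is already a compact rotational sphere. First I would check the algebraic setup: taking $W(k_1,k_2,v)=k_1k_2-c$ in Definition \ref{prescri}, symmetry is obvious and the ellipticity condition reads $W_{k_1}W_{k_2}=k_1k_2=c>0$ on $W^{-1}(0)$, so that $K_e=c$ defines a general Weingarten class $\cW_c$ in $M=\Ek$. Thus everything developed in Sections \ref{sec:canonical}--\ref{sec:s3} applies; in particular Theorem \ref{mainth} will give rotational symmetry, uniqueness, embeddedness (when $M$ is not compact) and existence, provided the canonical rotational example $S$ has bounded second fundamental form.

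To obtain this bound, I would argue that $S$ is actually a compact rotational sphere, so the second fundamental form is automatically bounded by compactness combined with the relation $\kappa_1\kappa_2=c$ (which, once $|\kappa_1|$ is bounded, forces $|\kappa_2|=c/|\kappa_1|$ to be bounded as well). The profile curve $(\rho(s),h(s))$ of $S$ in the arclength parameter of \eqref{metpv} satisfies $\rho(0)=h(0)=h'(0)=0$, $\rho'(0)=\nu(0)=1$, and substituting $K_e=c$ into \eqref{res6} produces the autonomous second order ODE
\[
\rho''(s)=\frac{16(c+\tau^2)\,\rho(s)}{(4+\kappa\rho(s)^2)\bigl(-4+(\kappa-8\tau^2)\rho(s)^2\bigr)}.
\]
Multiplying by $\rho'(s)$ and integrating yields the explicit first integral
\[
\rho'(s)^2=1+\frac{2(c+\tau^2)}{\kappa-4\tau^2}\,\ln\left|\frac{-4+(\kappa-8\tau^2)\rho(s)^2}{4+\kappa\rho(s)^2}\right|,
\]
valid because $\kappa\neq 4\tau^2$ in any $\Ek$-space.

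From this identity I would read off the qualitative behavior of $\rho$: near $s=0$ a Taylor expansion gives $\rho'(s)^2\approx 1-(c+\tau^2)\rho(s)^2$, so $\rho'$ is strictly decreasing from the value $1$ and the profile curve first moves away from the axis. Continuing the analysis, I would show that the right-hand side of the first integral vanishes at some value $\rho_{\max}$ that lies strictly inside the admissible range for $\rho$ (i.e.\ $\rho<2/\sqrt{-\kappa}$ if $\kappa<0$, and $\rho^2<4/(\kappa-8\tau^2)$ if $\kappa>8\tau^2$), so the ODE coefficients never become singular. At $s_0$ where $\rho(s_0)=\rho_{\max}$ one has $\nu=\rho'(s_0)=0$, and by the autonomous-ODE reflection $s\mapsto 2s_0-s$ the profile curve extends symmetrically and closes back at the axis, yielding a rotational sphere $S$. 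With $S$ compact, Theorem \ref{mainth} applies and its items (1)--(6) give all of the remaining assertions of Theorem \ref{posex} (rotational symmetry of any immersed $K_e=c$ sphere, uniqueness up to ambient isometry, embeddedness outside Berger spheres, etc.).

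The main obstacle is exactly the sign and range analysis in the previous paragraph: one must verify, for every positive $c$ and every admissible $(\kappa,\tau)$, that the root $\rho_{\max}$ of the first integral indeed exists within the interior of the coordinate model $\Rk$, rather than being pushed to the boundary of the model or beyond the zero of $-4+(\kappa-8\tau^2)\rho^2$. The delicate case is the Berger sphere regime $\kappa>0$, $\tau\neq 0$, where the model $\Rk$ is not global; there one must combine the ODE analysis with the antipodal-fiber arguments already used in Section \ref{sec:s3} to confirm that $S$ closes up inside $M$ as a (possibly non-embedded) immersed sphere, consistent with Figure \ref{figesf}.
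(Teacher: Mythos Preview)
Your approach is essentially the paper's own: set up the autonomous ODE for $\rho(s)$ from \eqref{res6}, integrate once to obtain the first integral \eqref{ress7}, and then argue that $\nu=\rho'$ hits zero at some finite $\rho_{\max}$ so that the canonical example closes up as a sphere and Theorem~\ref{mainth} applies. The paper carries out exactly the case analysis you flag as ``the main obstacle'': its Claim shows, by splitting into the regimes $\kappa<0$, $\kappa>8\tau^2$, $0\le\kappa\le 8\tau^2$, that the function $\delta(x)$ under the square root in \eqref{ress7} is strictly decreasing and vanishes at some $x_0$ strictly inside the coordinate range of $\Rk$.

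Two points are worth sharpening. First, your reflection argument (``by the autonomous-ODE reflection $s\mapsto 2s_0-s$ the profile curve closes back'') presupposes that $S$ actually reaches $\rho_{\max}$ with bounded geometry; before that is established, appealing to compactness to bound $|\sigma|$ is circular. The paper instead works through Proposition~\ref{posrot}: it rules out the cylinder limit (since cylinders have $K_e=-\tau^2$), rules out the entire-graph case via the first integral, and then checks directly that the mean curvature in \eqref{resH} cannot blow up by verifying $\rho'(s)^2<1/(1+\tau^2\rho(s)^2)$ along the orbit (the $\varphi(x)=\delta(x)-1/(1+\tau^2x^2)$ computation). This last inequality is the one ingredient missing from your outline, and without it case~(4) of Proposition~\ref{posrot} is not excluded.

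Second, your final paragraph about needing the antipodal-fiber machinery of Section~\ref{sec:s3} in the Berger-sphere case is unnecessary here. The paper's Claim handles all $(\kappa,\tau)$ uniformly and shows that $S$ is already a rotational sphere inside the single chart $\Rk$ (case~(2) of Proposition~\ref{posrot}); the antipodal-fiber analysis in Section~\ref{sec:s3} enters only in the proof of Theorem~\ref{mainth} itself, which you are invoking as a black box.
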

\begin{proof}
Let $S$ be the canonical rotational example in $M=\Ek$ associated to the elliptic Weingarten equation $K_e =c>0$. We will prove that $S$ is a sphere. This provides the existence in the statement of Theorem \ref{posex}, while the uniqueness up to ambient isometry and embeddedness properties follow directly from Theorem \ref{mainth}.

Assume that the rotation axis of $S$ in a standard coordinate model $\Rk$ corresponds to the $x_3$-axis. Let $(\rho(s),0,h(s))$ denote the profile curve of $S$, parametrized with respect to the parameter $s$ defined before  \eqref{metpv}. Thus, equations \eqref{res5} and \eqref{res6} hold for $\rho(s)$. Moreover, we have the initial conditions $\rho(0)=h(0)=0$, $h'(0)=0$, $\rho'(0)=1$. Also, note that $4+\kappa \rho(s)^2>0$ for all $s$, because of the definition of $\mathcal{R}^3 (\kappa,\tau)$. It follows from \eqref{res6} that
 \begin{equation}\label{ress4}
 \rho''(s)=\frac{16 \rho(s) (c+\tau^2)}{(4+\kappa \rho(s)^2)^2 (-4+ (\kappa-8\tau^2)\rho(s)^2)}.
 \end{equation}
Thus, if we write $x(s):=\rho(s)$, $y(s):=\rho'(s)=\nu(s)$, we can see from \eqref{ress4} that $(x(s),y(s))$ is an orbit of the autonomous ODE system
\begin{equation}\label{ress5}
\left. \def\arraystretch{1.5}\begin{array}{lll} x' & = & y \\ y' & = & G_c(x) \end{array} \right\}, \hspace{0.5cm} G_c(x):=\frac{16 x (c+\tau^2)}{(4+\kappa x^2)^2(-4+(\kappa-8\tau^2) x^2)}.\end{equation}

Note that $(x(0),y(0))=(0,1)$. Let $\theta(s)$ denote the restriction of $(x(s),y(s))$ to a maximal interval $I\subset [0,\8)$, with $0\in I$, and such that $y(s)>0$ for every $s\in I$. It then follows from Lemma \ref{monan} and \eqref{res5} (or directly from \eqref{ress5}) that  $y(s)$ is strictly decreasing in $I$. Also, by \eqref{ress5}, we have $x'(s)>0$ for all $s\in I$. In particular, $\{\theta(s): s\in I\}$ is a smooth graph $y=y(x)$, where $x$ varies in an interval $J\subset [0,\8)$ containing the value $x=0$, and $y'(x)<0$ for all $x\in J$, $x>0$.

We compute next the explicit expression of $y(x)$. By \eqref{ress5} we have

 
 $$ y'(x)= \frac{G_c(x)}{y}.
$$
Solving this ODE by separating variables, with the initial condition that $y=1$ at $x=0$, we get
 \begin{equation}\label{ress7}
y(x)=\sqrt{1+\frac{2(c+\tau^2)}{\kappa-4\tau^2} \, {\rm log} \left(\frac{4-(\kappa-8\tau^2)x^2}{4+\kappa x^2}\right) }.
\end{equation}

\vspace{0.2cm}

\noindent {\bf Claim:} \emph{There exists $x_0>0$ such that} 
 \begin{equation}\label{eqde}
 \delta(x)= 1+\frac{2(c+\tau^2)}{\kappa-4\tau^2} \, {\rm log} \left(\frac{4-(\kappa-8\tau^2)x^2}{4+\kappa x^2}\right) \end{equation} \emph{is a strictly decreasing, well defined function in $[0,x_0]$, with $\delta(0)=1$ and $\delta(x_0)=0$.}

\vspace{0.2cm}

\noindent \emph{Proof of the Claim :}
It is clear from \eqref{eqde} that $\delta(x)$ is well defined in $[0,x^*)$, where $$x^*= 
\left\{ 
\def\arraystretch{1.3}\begin{array}{lll} \frac{2}{\sqrt{-\kappa}} & \text{ if } & \kappa <0,\\ \frac{2}{\sqrt{\kappa-8\tau^2}} & \text{ if } & \kappa >8\tau^2,\\
\8 & \text{ if } & 0\leq \kappa \leq 8 \tau^2, \end{array}
\right.$$ and it satisfies $\delta'(x)<0$ for all $x\in [0,x^*)$. In the first two cases for $x^*$, and also for $\kappa=0$ and $\kappa=8\tau^2$, it is easy to check that $\delta(x)\to -\8$ as $x\to x^*$, what proves the Claim in those cases.

Finally, if $0<\kappa<8\tau^2$ (with $\kappa\neq 4\tau^2$), we have 
 \begin{equation}\label{lir}
\lim_{x\to x^*} \delta(x) = 1+ \frac{2(c+\tau^2)}{\kappa-4\tau^2} {\rm log} \left(\frac{8\tau^2-\kappa}{\kappa}\right).
 \end{equation} 
It is easy to prove that the function $$\varphi(t):= 1 + \frac{2(\bar{c}+t)}{1-4t} {\rm log} \left(8t-1\right)$$ is negative for all $t>1/8$, what proves that the right-hand side of \eqref{lir} is negative, choosing $t=\tau^2/\kappa$ and $\bar{c}= c/\kappa$. Once here, a similar argument to the one used in the other two cases proves the Claim for the remaining situation $0<\kappa<8\tau^2$.

\vspace{0.2cm}

We now complete the proof of Theorem \ref{posex}. By Proposition \ref{posrot}, to prove that $S$ is a sphere we only need to show that the norm of the second fundamental form of $S$ is bounded for values of the angle function $\nu\in [0,1]$, that $S$ cannot converge $C^\8$-asymptotically to a rotational vertical cylinder $x_1^2+x_2^2=R^2$ in $\Rk$, 
and that $S$ is not an entire graph in $\Rk$. The fact that $S$ cannot converge $C^\8$-asymptotically to a cylinder is clear, because of the condition $K_e=c>0$ (the cylinders $x_1^2+x_2^2=R^2$ have constant extrinsic curvature $K_e=-\tau^2$). Also, it follows from \eqref{ress7}, the Claim above and $x(s)=\rho(s)$ that $S$ is not an entire graph.

Finally, in order to prove that the second fundamental form of $S$ is bounded, it suffices to prove that the right-hand side of \eqref{resH} is bounded, which by \eqref{ress4} fails to holds only if one of the next situations happens for $s$ approaching some value $s_*>0$ with $y(s)=\nu(s)>0$ for all $s\in [0,s_*)$.

\begin{enumerate}
\item
$\rho'(s)^2=\nu(s)^2$ converges to $1/(1+\tau^2 \rho(s)^2)$. 
\item
$4+\kappa \rho(s)^2 $ converges to $0$.
 \item
$4-(\kappa-8\tau^2)\rho(s)^2$ converges to $0$.
\end{enumerate}
The last two conditions are impossible, by \eqref{ress7}, the above Claim, and the fact that, in these conditions $\delta(\rho(s))\to -\8$ as $s\to s_*$. As regards the first condition, it is also impossible since the function $$\varphi(x) :=\delta(x) - \frac{1}{1+\tau^2 x^2},$$ where $\delta(x)$ is given by \eqref{eqde}, satisfies that $\varphi(0)=0$ and $\varphi'(x)<0$ if $x>0$. Thus, we deduce that $S$ is a rotational sphere of constant extrinsic curvature $c>0$. This proves Theorem \ref{posex}.
\end{proof}

Let us remark that, because of \eqref{ress7} and \eqref{eqan1}, it is possible to give an explicit (but complicated) expression for the sphere of constant extrinsic curvature $c>0$ in $\Ek$ in terms of an integral. We omit the specific formula.

\begin{figure}
\begin{center}
\includegraphics[height=7cm]{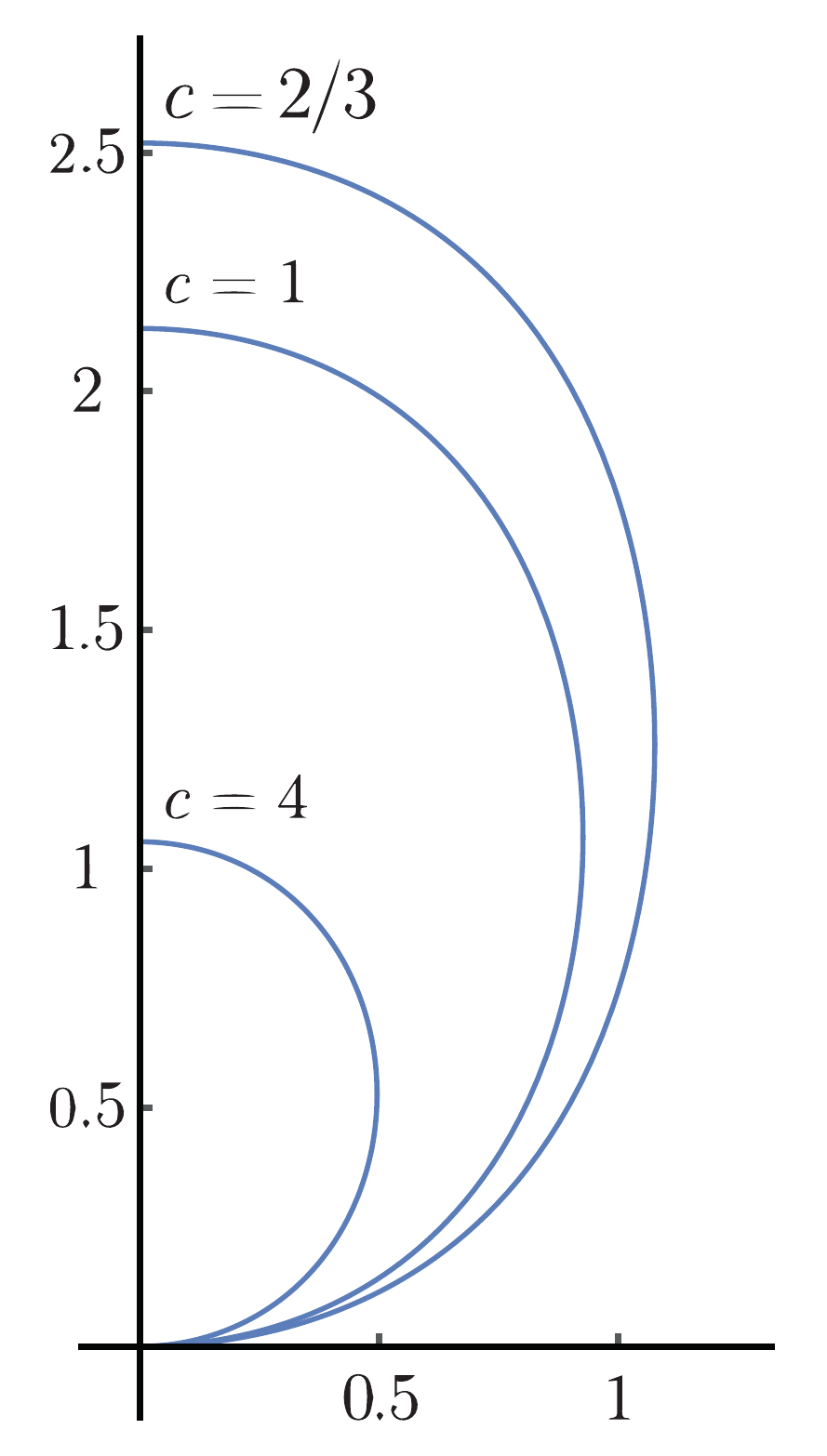}  \hspace{2cm}
\includegraphics[height=7cm]{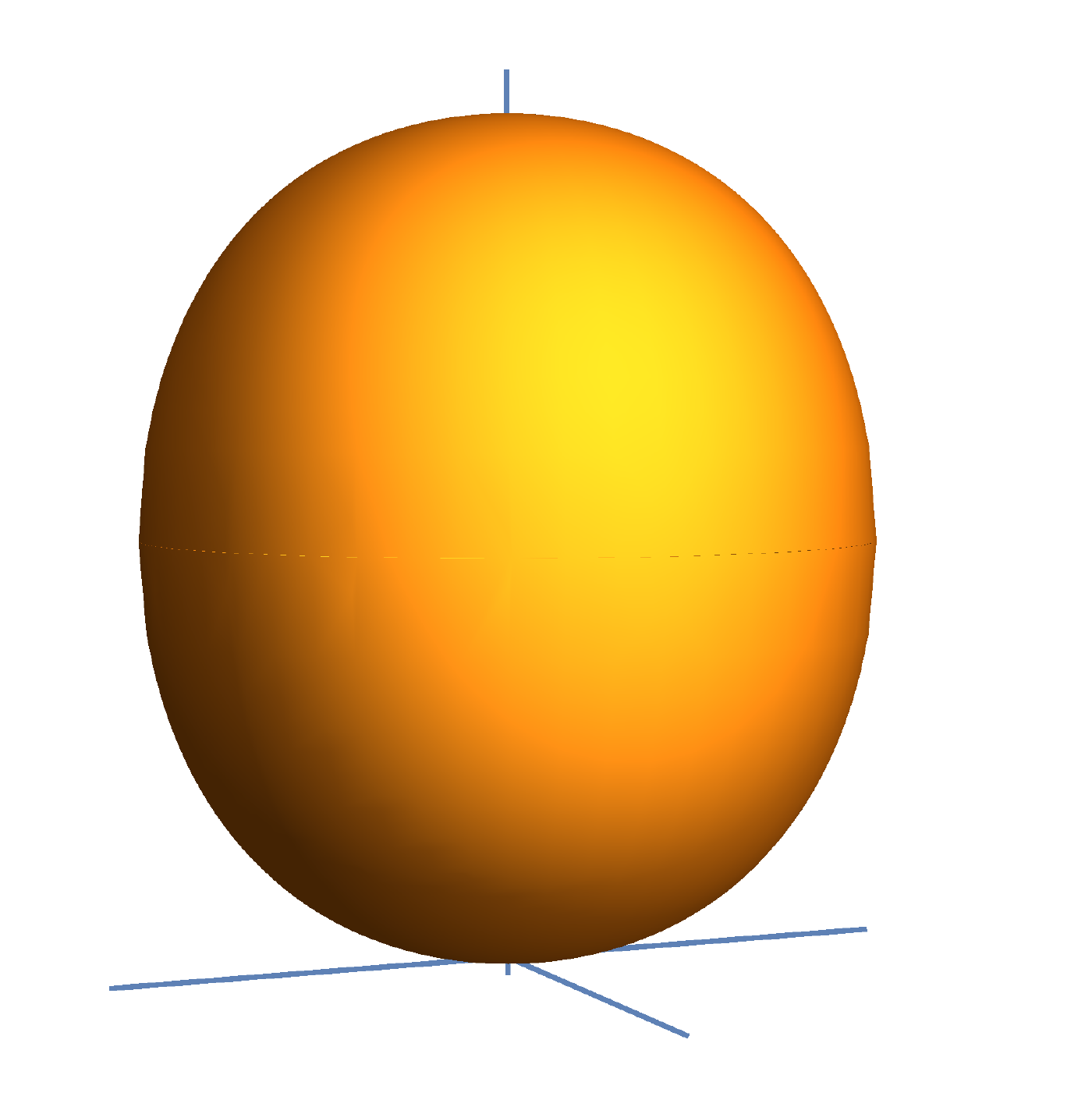}
\caption{Left: profile curves of rotational spheres with $K_e=c$ in ${\rm Nil}_3 = \mathbb{E}^3(0,1)$ for some values of $c>0$. Right: Rotational sphere with $K_e=1$ in ${\rm Nil}_3$.}
\end{center}
\end{figure}

\begin{remark}
The proof of Theorem \ref{posex} shows that if $S$ is a rotational sphere in $M=\Ek$ and we view it in a canonical coordinate model $\Rk$, then $S$ is an embedded rotational sphere in $\Rk$. However, when $M$ is compact, i.e. a Berger sphere, this does not imply that the rotational sphere $S$ is actually embedded in $M$.

More specifically, let $\pi:M=\Ek\flecha \S^2(\kappa)$ denote the canonical fibration of $M$ onto $\S^2(\kappa)$, and let $\Rk$ denote an associated coordinate model for $M$, with coordinates $(x_1,x_2,x_3)$. As explained in Appendix 2, two points $(x_1,x_2,x_3)$ and $(x_1,x_2,x_3+ 8\pi \tau/\kappa)$ in $\Rk$ correspond to the same point in $M$. In particular, if the rotational sphere $S$ in $\Rk$ starts in $\Rk$ at height zero, and its maximum height is greater than $8\pi \tau/\kappa$, then $S$ is not embedded when viewed in $M$. There are values of $\kappa,\tau$ and $c$ for which this situation happens; see Figure \ref{noen} and Figure \ref{figesf}.
\end{remark}

\begin{figure}[h]
\begin{center}
\includegraphics[height=6cm]{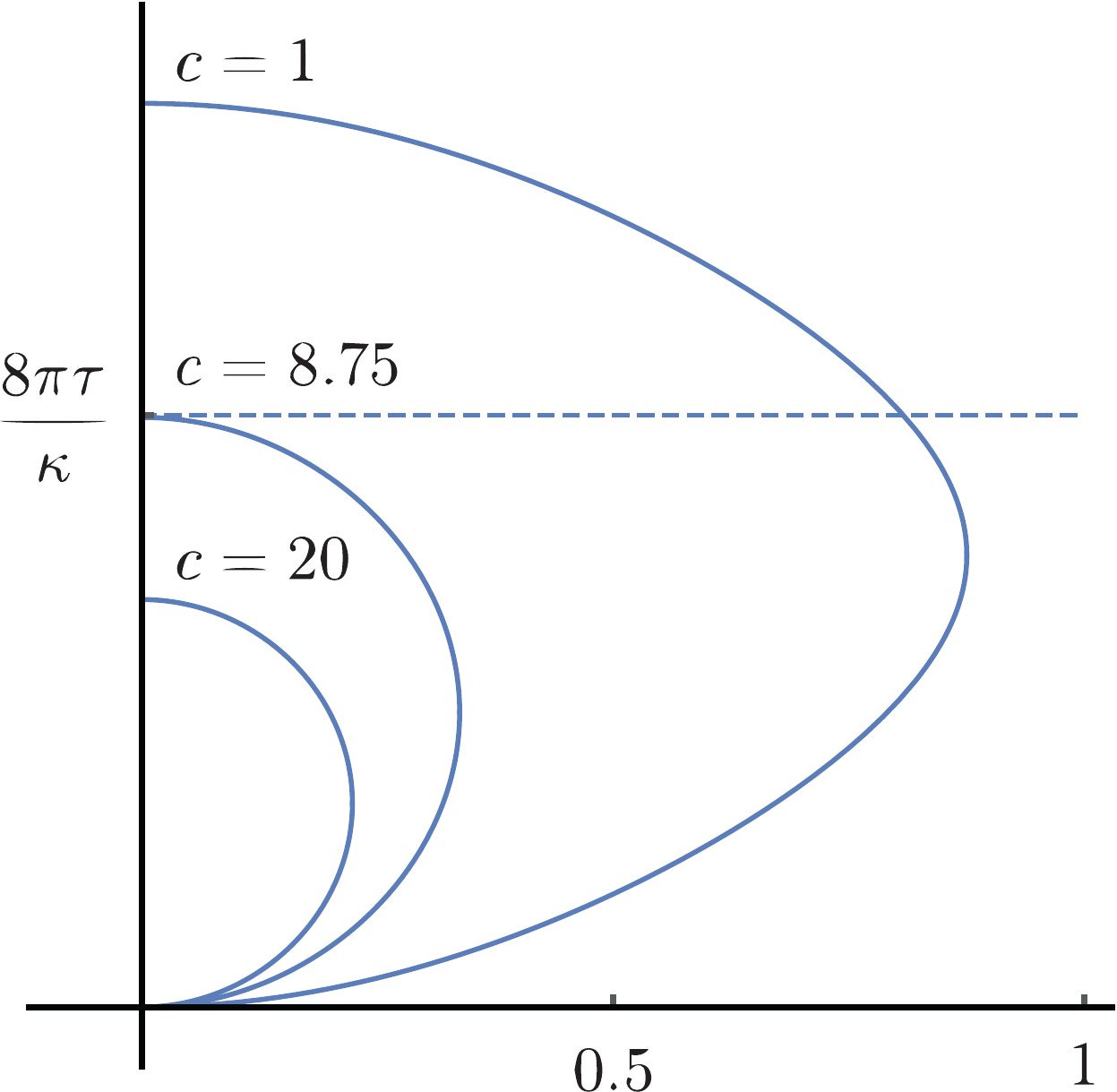}  \hspace{1cm}
\caption{Profile curves of rotational spheres with $K_e=c$ in the Berger sphere $M=\Ek$ for $\kappa=4$, $\tau=1/10$, in a canonical coordinate model $\Rk$. The ones whose profile curves reach the height $8\pi \tau/\kappa$ are not embedded in $M$.}\label{noen}
\end{center}
\end{figure}

\subsection{A Minkowski-type problem in $\Ek$}

In this section we prove, using the arguments and results from the previous section:

\begin{theorem}\label{th:min}
Let $M=\Ek$ and let $\Phi\in C^\8 ([0,1])$, $\Phi>0$ Then:
\begin{enumerate}
\item
There exists a rotational sphere $S$ in $M$ that satisfies 
 \begin{equation}\label{enumin}
 K_e =\Phi(\nu^2),
\end{equation}
where $K_e$ and $\nu$ are the extrinsic curvature and the angle function of $S$, respectively.
 \item
Any other immersed sphere in $M$ whose extrinsic curvature and angle function satisfy \eqref{enumin} is, up to ambient isometry, the rotational sphere $S$.
\end{enumerate}
\end{theorem}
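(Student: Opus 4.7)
The plan is to reduce Theorem \ref{th:min} to Theorem \ref{mainth} by showing that the canonical rotational example $S$ of the class $\cW$ defined by the equation $K_e = \Phi(\nu^2)$ is a sphere with bounded second fundamental form. First note that $W(k_1,k_2,v) := k_1k_2-\Phi(v)$ is symmetric and satisfies $W_{k_1}W_{k_2}=k_1k_2=\Phi(v)>0$ on $W^{-1}(0)$, so $\cW$ is a general Weingarten class in the sense of Definition \ref{prescri} and $S$ exists by Section \ref{sec:canonical}.

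Following the strategy of Theorem \ref{posex}, I would parametrize the profile curve of $S$ by the arclength parameter $s$ of \eqref{metpv}, so that $\nu(s)=\rho'(s)$ by \eqref{res5}, and \eqref{res6} combined with $K_e=\Phi(\nu^2)$ yields
\[
\rho''(s) = \frac{16\,\rho(s)\bigl(\Phi(\rho'(s)^2)+\tau^2\bigr)}{(4+\kappa\rho(s)^2)\bigl(-4+(\kappa-8\tau^2)\rho(s)^2\bigr)}.
\]
Setting $x=\rho$, $u=\nu^2=\rho'(s)^2$ and eliminating $s$ reduces this to the scalar first-order equation
\[
\frac{du}{dx} = f(x)\bigl(\Phi(u)+\tau^2\bigr), \qquad f(x):=\frac{32\,x}{(4+\kappa x^2)\bigl(-4+(\kappa-8\tau^2)x^2\bigr)}, \qquad u(0)=1,
\]
valid on an interval $(0,x^*)$ on which $f<0$, where $x^*=2/\sqrt{-\kappa}$ if $\kappa<0$, $x^*=2/\sqrt{\kappa-8\tau^2}$ if $\kappa>8\tau^2$, and $x^*=\infty$ otherwise.

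Since $\Phi$ is continuous and positive on $[0,1]$, choose constants $0<a\leq\Phi\leq b$, and let $u_a, u_b$ denote the solutions obtained by replacing $\Phi$ with $a$ and $b$ respectively. By Theorem \ref{posex} these correspond to the canonical rotational spheres of constant extrinsic curvatures $a$ and $b$, and the Claim in that proof shows that each $u_c$ ($c=a,b$) is strictly decreasing on its interval of definition, satisfies $u_c(x) < 1/(1+\tau^2 x^2)$, and vanishes at some finite $x_c < x^*$ with $x_b \leq x_a$. Since $f<0$ on $(0,x^*)$, the identities $u'-u_a' = f(x)(\Phi(u)-a)\leq 0$ and $u'-u_b' = f(x)(\Phi(u)-b)\geq 0$ integrate from the common initial value to give the sandwich $u_b(x)\leq u(x)\leq u_a(x)$. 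In particular $u$ vanishes at some $x_0\in[x_b,x_a]$, stays strictly below $1/(1+\tau^2 x^2)$, and $\rho$ remains bounded away from $x^*$.

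By Proposition \ref{posrot}, $S$ is either an entire graph, a sphere, a proper graph asymptotic to a vertical cylinder, or a graph with unbounded second fundamental form. The first possibility is excluded by the boundedness of $\rho$; the third would force $K_e\to-\tau^2\leq 0$ on the asymptotic cylinder, contradicting $K_e=\Phi(\nu^2)\geq a>0$; and the fourth is ruled out by running through the three blow-up scenarios in the proof of Theorem \ref{posex}, each of which is precluded by the bounds $u(x) < 1/(1+\tau^2 x^2)$ and $0<x<x_a<x^*$ derived above. Hence $S$ is a rotational sphere with bounded second fundamental form, which proves part (1), and Theorem \ref{mainth} then yields part (2) directly. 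The only real obstacle is the ODE comparison, which turns out to be clean because the right-hand side factors as $f(x)\cdot(\Phi(u)+\tau^2)$ with $f$ independent of $u$, making the sandwich with the two constant-curvature cases automatic.
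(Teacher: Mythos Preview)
Your proof is correct and follows essentially the same strategy as the paper: reduce to the phase-plane ODE coming from \eqref{res6}, compare with the constant extrinsic curvature case of Theorem \ref{posex} to trap $\rho$ away from $x^*$ and keep $\nu^2$ below $1/(1+\tau^2\rho^2)$, and then invoke Proposition \ref{posrot} and Theorem \ref{mainth}. The only real difference is cosmetic: the paper compares the orbits $y(x)$ and $y_c(x)$ via a first-crossing slope argument with a single $c<\min\Phi$, whereas you pass to $u=y^2$, exploit the factorization $u'=f(x)(\Phi(u)+\tau^2)$, and integrate the differential inequalities directly to get a two-sided sandwich $u_b\le u\le u_a$; your version is a bit cleaner, but the content is the same.
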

\begin{proof}
Equation \eqref{enumin} is a general elliptic Weingarten equation in $\Ek$, so in particular the canonical rotational example $S$ that satisfies \eqref{enumin} exists. As in the proof of Theorem \ref{posex}, we want to prove that $S$ is a sphere, what will prove item (1). Item (2) is a direct consequence of item (1) and Theorem \ref{mainth}.

In order to prove that $S$ is a sphere, we start arguing as in the proof of Theorem \ref{posex}. We assume that the rotation axis of $S$ corresponds to  the $x_3$-axis in a standard coordinate model $\mathcal{R}^3(\kappa,\tau)$ of $\Ek$, and we let $(\rho(s),0,h(s))$ denote the profile curve of $S$, parametrized with respect to the parameter $s$ defined before  \eqref{metpv}, and so that $\rho(0)=h(0)=h'(0)=0$, $\rho'(0)=1$. Writing $x(s):=\rho(s)$, $y(s):=\rho'(s)=\nu(s)$, we deduce from \eqref{enumin}, just as we did in \eqref{ress5} for the case $\Phi={\rm constant}$, that $\theta(s):=(x(s),y(s))$ defines an orbit of the autonomous ODE system
 \begin{equation}\label{rest5}
\left. \def\arraystretch{1.5}\begin{array}{lll} x' & = & y \\ y' & = & G(x,y) \end{array} \right\}, \hspace{0.5cm} G(x,y):=\frac{16 x (\Phi(y^2)+\tau^2)}{(4+\kappa x^2)^2(-4+(\kappa-8\tau^2) x^2)}.\end{equation}

Note that $\theta(0)=(0,1)$. By the monotonocity properties of orbits of \eqref{rest5}, the piece of $\theta(s)$ that contains $(0,1)$ and lies in the region $y>0$ is a graph $y=y(x)$ on an interval $J\subset [0,\8)$ containing the value $x=0$, with $y'(x)<0$ if $x>0$. By \eqref{rest5} and the initial conditions, the geodesic curvature of $\theta(s)$ at $s=0$ has the value $-(\Phi(1)+\tau^2)$. 

Choose now $c\in (0,m)$, with $m:={\rm min}\{\Phi(v): v\in [0,1]\}$, and let $\theta_c(s)$ denote the orbit of \eqref{ress5} corresponding to the canonical rotational sphere $S_c$ in $\Ek$ with $K_e=c>0$. Again, $\theta_c(0)=(0,1)$. Moreover, $\theta_c(s)$ restricted to the region $y\geq 0$ is the graph $y=y_c(x)$ where $y_c(x)\in C^0([0,x_0])$ is given by the right-hand side of \eqref{ress7}. By the previous computation, the geodesic curvature of $\theta_c(s)$ at $s=0$ is $-(c+\tau^2)$, and so $y(x)<y_c(x)$ for all $x>0$ sufficiently small.

\begin{figure}[h]\label{fig73}
\begin{center}
\includegraphics[height=6cm]{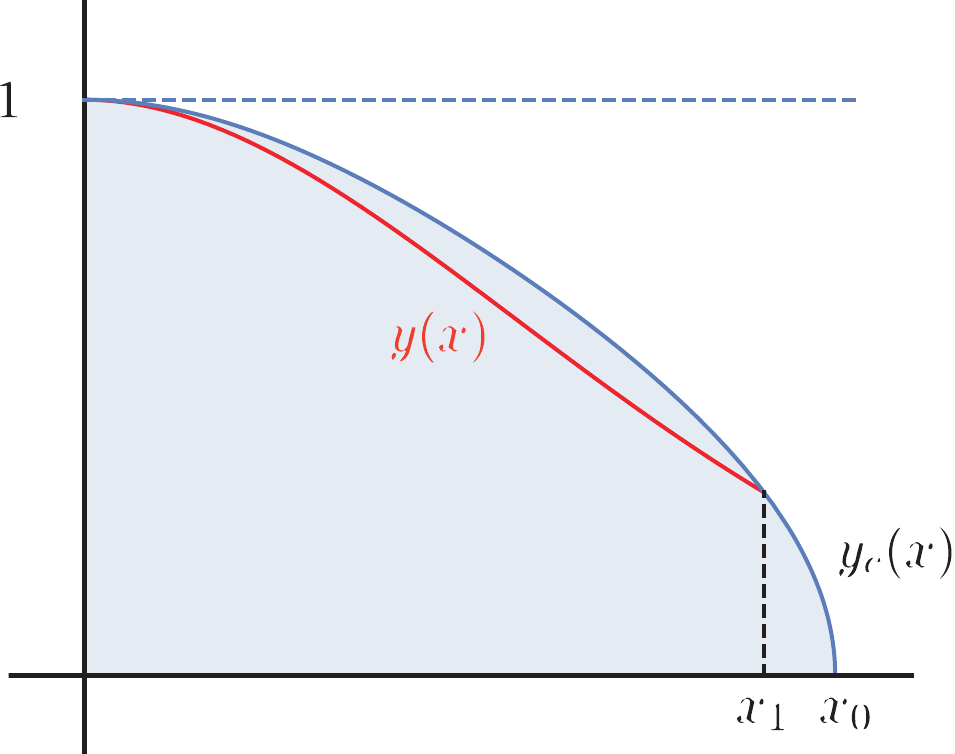}  \hspace{1cm}
\caption{One cannot have $y(x_1)=y_c(x_1)$ for $x_1\in (0,x_0)$.}
\end{center}
\end{figure}

Assume now that there exists $x_1\in (0,x_0)$ with $y(x_1)=y_c(x_1)$ and $y(x)<y_c(x)$ for all $x\in (0,x_1)$. Then we would have $y'(x_1)^2 \leq y_c'(x_1)^2$, what is impossible from the definition of $c$, by comparing \eqref{ress5} with \eqref{rest5}; see Figure. Thus, the orbit $\theta(s)$ cannot intersect $\theta_c(s)$ in the region $y>0$. In particular, by our previous study of $y_c(x)$, we have that the denominator of $G(x(s),y(s))$, for $G$ given as in \eqref{rest5}, is bounded from above by a negative constant. This implies that the graph $y=y(x)$ given by the orbit $\theta(s)$ is defined on an interval $[0,\bar{x}_0]$ with $\bar{x}_0<x_0$, with $y(\bar{x}_0)=0$, and with $y(x)<y_c(x)$ for all $x\in (0,\bar{x}_0]$. This also implies that $$y(x)^2 <y_c(x)^2 < \frac{1}{1+\tau^2 x^2}$$ for all $x\in (0,\bar{x}_0)$, where the last inequality comes from the last part of the proof of Theorem \ref{posex}. Taking all of this into account, we can argue as in the proof of Theorem \ref{posex} and conclude that $S$ is a rotational sphere. This completes the proof of Theorem \ref{th:min}.
\end{proof}

\section{Weingarten spheres in $\H^2\times \R$ and $\S^2\times \R$}\label{sec8}

\subsection{Classification of Weingarten spheres in $\H^2\times \R$}\label{esth2}

In the present Section \ref{esth2} we will let $\cW$ denote the class of immersed oriented surfaces in $\H^2\times \R$ that satisfy an arbitrary elliptic Weingarten equation \eqref{speci1}. As explained in Section \ref{sec:rew}, we can view \eqref{speci1} in the form \eqref{weq3}, i.e. $\kappa_1=f(\kappa_2)$, where $f\in C^\8 (a,b)$ satisfies conditions (i) to (iv) stated after equation \eqref{weq3}.

Let $S$ be the canonical rotational example in $\H^2\times \R$ of the class $\cW$; see Definition \ref{def:rote}. We prove next:

\begin{proposition}\label{boundh2}
The norm of the second fundamental form of $S$ is bounded.
\end{proposition}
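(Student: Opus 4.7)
I will prove Proposition \ref{boundh2} by going through the four possibilities for $S$ provided by Proposition \ref{posrot}. Cases (2) ($S$ a rotational sphere) and (3) ($S$ smoothly asymptotic to a vertical cylinder) yield bounded $|\sigma|$ directly, the first by compactness and the second by the $C^\infty$-asymptotic condition. It remains to bound $|\sigma|$ in case (1) ($S$ an entire graph in $\mathcal{R}^3(\kappa,0)=\H^2\times\R$) and to rule out case (4) ($S$ a graph over a bounded disk with unbounded second fundamental form).

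The idea is to switch to hyperbolic polar coordinates $(r,\theta,h)$ on $\H^2\times\R$, where $r$ is the hyperbolic distance to the rotation axis of $S$. In these coordinates the meridian half-plane $\{\theta=\theta_0\}$ is totally geodesic and isometric to the flat half-plane $\{(r,h):r\geq 0\}$. Writing the profile curve of $S$ as $(r(t),h(t))$ parametrized by arclength in this flat half-plane (so $r'(t)^2+h'(t)^2=1$), and denoting $\nu=r'(t)$ the angle function, a standard computation consistent with \eqref{res5}--\eqref{res6} after the change of variable from $\rho$ to $r$ gives
\[
\kappa_p=\coth(r)\sqrt{1-\nu^2}, \qquad \kappa_m = r'(t)h''(t)-r''(t)h'(t),
\]
where $\kappa_p$ is the parallel principal curvature and $\kappa_m$ is the Euclidean curvature of the profile curve in the flat $(r,h)$-plane.

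The factor $\coth(r)$, together with the elementary bound $\sqrt{1-\nu^2}\leq 1$, makes $\kappa_p$ a priori bounded on $S$: near the pole $r=0$, the product extends continuously to the umbilical value $\alpha$, and for $r$ bounded away from $0$ one has $\coth(r)\leq\coth(r_0)$; hence there exists $M<\infty$ with $0\leq\kappa_p\leq M$ on $S$. Writing the elliptic Weingarten equation as $\kappa_m=f(\kappa_p)$ with $f:(a,b)\to(a,b)$ the decreasing involution from Section \ref{sec:rew}, the result follows once I show that the orbit $\{\kappa_p(t):t\in[0,t_*)\}$ stays in a compact subinterval $K\subset(a,b)$, since then $f(K)$ is compact and $|\sigma|^2=\kappa_p^2+f(\kappa_p)^2$ is bounded; in particular case (4) is ruled out, as it would require $\kappa_m$ to blow up.

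The hard part is this last step: verifying that the orbit of $\kappa_p$ cannot approach an endpoint $a$ or $b$ of $(a,b)$ along $S$. Since the orbit starts at $\alpha\in(a,b)$ and is contained in $[0,M]$, an escape could happen only if $a$ or $b$ lies in $[0,M]$. I plan to rule out each case by combining Lemma \ref{monan} (strict monotonicity of $\nu$ along the profile), the explicit identity $\kappa_p=\coth(r)\sqrt{1-\nu^2}$, and the ODE extension analysis of Subsection \ref{sub:ext}. For instance, if $a>0$, the constraint $\kappa_p>a$ is equivalent to $\sqrt{1-\nu^2}>a\tanh(r)$, a relation compatible with the initial condition $(r,\nu)=(0,1)$ and which can be propagated along the autonomous system $r'(t)=\cos\phi$, $\phi'(t)=f(\coth(r)\sin\phi)$ obtained by writing $\nu=\cos\phi$; a symmetric argument handles $b<\infty$. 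Once $\kappa_p$ is confined to a compact subset of $(a,b)$, the continuity of $f$ delivers the desired bound on $\kappa_m$ and completes the proof.
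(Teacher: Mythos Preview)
Your setup is essentially the paper's: hyperbolic polar coordinates, the formulas $\kappa_p=h'\coth r$ and $\kappa_m=r'h''-r''h'$, the observation that $\kappa_p$ is bounded, and the reduction to showing that $\kappa_p$ stays in a compact subinterval of $(a,b)$. So the architecture is right.

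The gap is precisely at what you call ``the hard part.'' You write that the inequality $\sqrt{1-\nu^2}>a\tanh r$ ``can be propagated along the autonomous system,'' but you give no mechanism for this propagation. The difficulty is not that the orbit might \emph{cross} the curve $\Gamma=\{\kappa_p=a\}$ (it cannot, since $f$ is undefined there); the difficulty is that the orbit might \emph{approach} $\Gamma$ asymptotically, which is exactly what would make $\kappa_m=f(\kappa_p)\to+\infty$. Ruling out this asymptotic approach requires a genuine barrier argument. The paper inserts an intermediate curve $\Gamma_0=\{\kappa_p=\alpha_0\}$ for some $\alpha_0\in(a,\alpha)$, compares the curvatures of $\theta$ and $\Gamma_0$ at $(0,1)$ to get the orbit initially on the left of $\Gamma_0$, and then shows by a slope comparison (using $f(\alpha_0)>\alpha_0$ and the explicit form of $\Gamma_0$) that at any first crossing point the orbit would have to be steeper than $\Gamma_0$, a contradiction. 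This is the content of the proof, and your proposal does not supply it.

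A secondary issue: your treatment of $b<\infty$ by ``a symmetric argument'' does not work as stated, because the situation is not symmetric. The paper instead notes that $b<\infty$ forces $a=-\infty$ (property (iv)), and then uses the inequality $\kappa_m\geq 0$ (coming from $r''\leq 0$ and $h''\geq 0$, consequences of Lemma \ref{monan} that you do not invoke) to conclude that the arc of $\{k_1=f(k_2)\}$ lying in the quadrant $\{k_1\geq 0,\,k_2\geq 0\}$ is bounded, which finishes that case immediately. You would need to establish $\kappa_m\geq 0$ to close this branch.
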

\begin{proof}
Let us view $\H^2\times \R$ in the standard way as a subset of the Minkowski $4$-space $\mathbb{L}^4$ with signature $(- + + \, +)$. Let $\gamma(s)$ denote the profile curve of $S$ in this model, parametrized by arc-length, and assume by contradiction that $S$ does not have bounded second fundamental form. Then we can write $$\gamma(s)=(\sinh r(s),0,\cosh r(s),h(s)), \hspace{1cm} s\in [0,s_*),$$
with $r'^2+h'^2=1$, $r(0)=h(0)=0$, $h'(0)=0$ and $r(s)>0$ if $s>0$; moreover, there exists a sequence $(s_n)_n \in (0,s_*)$ converging to $s_*$ such that $|\sigma |(s_n)\to \8$, where $|\sigma | $ denotes the norm of the second fundamental form of $S$.

The angle function of $S$ is given by $\nu(s)=r'(s)$ (thus, $\nu(0)=1$), and the principal curvatures of $S$ are 
 \begin{equation}\label{pcs}
 \kappa_1 = r'h'' - r'' h', \hspace{1cm} \kappa_2 = h' \coth r.
 \end{equation}

Assume that there exists $\bar{s}\in (0,s_*)$ such that $\nu(\bar{s})=0$. Then, by our analysis in Section \ref{sub:ext} the canonical rotational example $S$ is a rotational sphere, what contradicts that $|\sigma | $ is unbounded on $S$. Therefore, $\nu(s)>0$ in $[0,s_*)$, i.e. $r(s)$ is strictly increasing in $[0,s^*)$.

Let $\alfa\geq 0$ denote the umbilicity constant of the class $\cW$, given by the condition $f(\alfa)=\alfa$. If $\alfa=0$, $S$ is the totally geodesic slice $\H^2\times \{0\}$, and the result is immediate. We assume from now on that $\alfa>0$. By \eqref{pcs}, and since $S$ has an umbilic point at $s=0$, we see that $\alfa=h''(0)$.

By Lemma \ref{monan}, the angle function $\nu(s)=r'(s)$ is strictly decreasing, and $h'(s)>0$ in $(0,s_*)$. In particular, by $r'^2+h'^2=1$ we conclude that $h''\geq 0$. This implies by \eqref{pcs} that $\kappa_1(s)\geq 0$ and $0<\kappa_2(s)\leq C$ for some $C<\8$, for every $s\in [0,s_*)$.

We can deduce directly from here that the second fundamental form of $S$ would be bounded whenever the domain of definition $(a,b)$ of the function $f$ satisfies that $a<0$ or $a=-\8$. Indeed, in that case, by the properties (i)-(iv) of $f$, the arc of the curve $k_1=f(k_2)$ in the $(k_1,k_2)$-plane that passes through $(\alfa,\alfa)$ and lies in the region $\{k_1\geq 0, k_2 \geq 0\}$, is bounded. As $(\kappa_1(s),\kappa_2(s))$ must lie in this arc for all $s\in [0,s_*)$, the second fundamental form of $S$ is uniformly bounded in that case.

Consider, thus, the remaining case, i.e. $a\geq 0$. Since $0<\kappa_2(s)\leq C$, we conclude that $\kappa_1(s_n)\to \8$ as $s_n\to s_*$, where $(s_n)_n$ is the sequence defined at the beginning of the proof.

By \eqref{pcs}, we can regard (using that $h'>0$) the Weingarten equation $\kappa_1=f(\kappa_2)$ as the following autonomous ODE system, where $x(s):=r(s)>0$ and $y(s):= r'(s)\in (-1,1)$:
\begin{equation}\label{odesi}
\left. \def\arraystretch{1.5}\begin{array}{lll} x' & = & y \\ y' & = & -\sqrt{1-y^2} \, f\left(\sqrt{1-y^2} \coth x \right) \end{array} \right\}.
\end{equation}

Let $\theta(s):=(x(s),y(s))$, $s\in (0,s_*)$, denote the orbit of \eqref{odesi} that corresponds to the canonical rotational example $S$. Note that $y(s)>0$ and $y'(s)<0$ for every $s\in (0,s_*)$, and that $\theta(s)\to (0,1)$ as $s\to 0$. Also, since $\kappa_1(s_n)\to \8$, we see from \eqref{odesi} that the points $\theta(s_n)$ converge to the curve $\Gamma \subset [0,\8)\times [0,1]$ given by $a=\sqrt{1-y^2} \coth x$. In particular, we must have $a>0$, and so $\Gamma$ is given by $$x= \Gamma(y):= \tanh^{-1} \left(\frac{1}{a} \sqrt{1-y^2}\right).$$ Consider next $\alfa_0 \in (a,\alfa)$, and the curve $\Gamma_0$ in $[0,\8)\times [0,1]$ given by 
 \begin{equation}\label{eqg0}
 x= \Gamma_0(y):= \tanh^{-1} \left(\frac{1}{\alfa_0} \sqrt{1-y^2}\right).
  \end{equation}
  Note that $(0,1)\in \Gamma \cap \Gamma_0$, and that $\Gamma_0$ is \emph{in the left side} of $\Gamma$, in the following sense: if $(x,y)\in \Gamma$ for $y\in [0,1)$, then there exists $x_0\in (0,x)$ such that $(x_0,y)\in \Gamma_0$.
  
A direct computation from \eqref{eqg0} shows that the absolute value of the geodesic curvature of $\Gamma_0$ at the point $(0,1)$ is $\alfa_0^2$. Similarly, a computation using \eqref{odesi} shows that the absolute value of the geodesic curvature of the orbit $\theta(s)$ at $s=0$, i.e. also at the point $(0,1)$, is given by $\alfa^2$. Since $\alfa_0^2<\alfa^2$, the orbit $\theta(s)$ is, near $(1,0)$, in the left side of $\Gamma_0$ (in the above sense).
We should also observe that $\theta(s)$ is a graph $x=g(y)$ for some smooth function $g$, since $y'(s)<0$ for all $s\in (0,s_*)$.

\begin{figure}[h]
\begin{center}
\includegraphics[height=5.5cm]{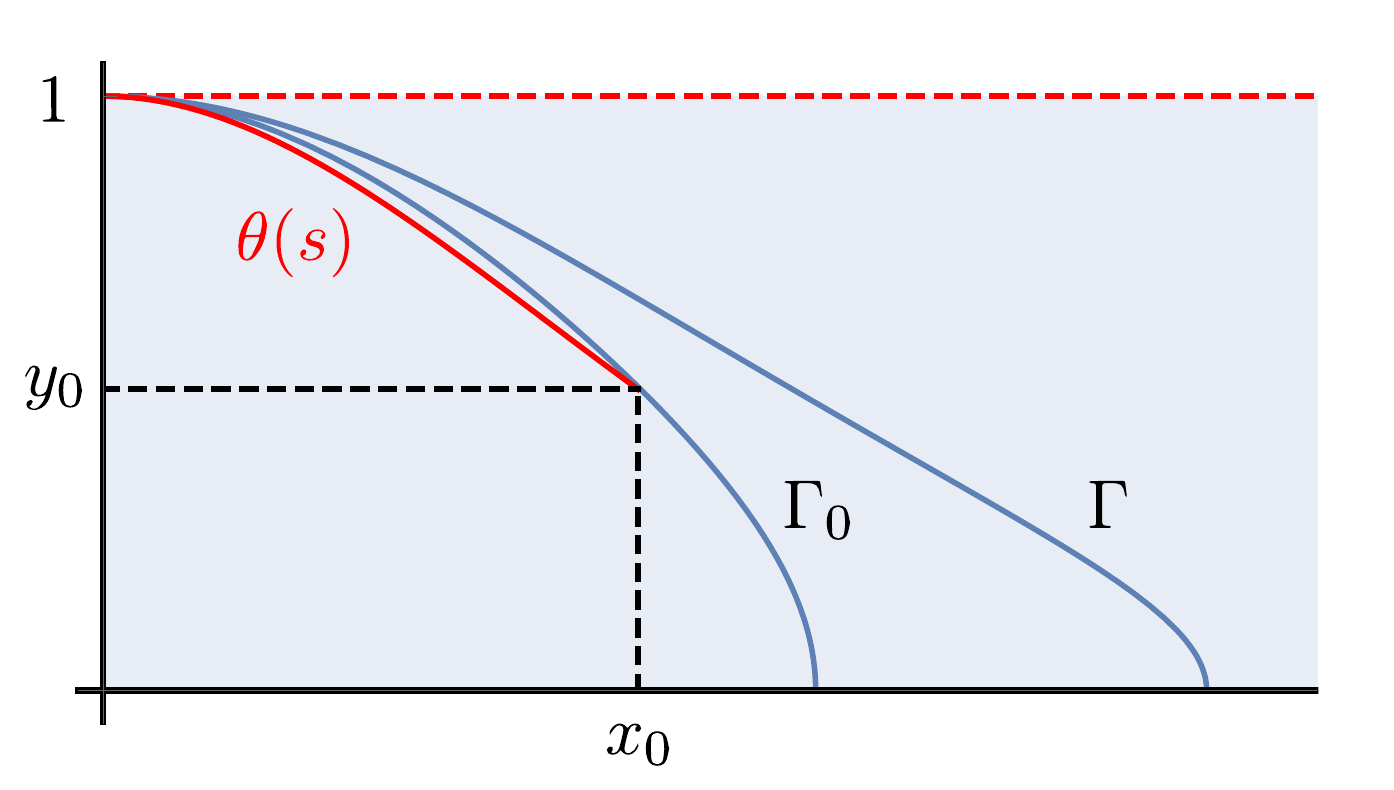}  \hspace{1cm}
\caption{The orbit $\theta(s)$ cannot intersect $\Gamma_0$, and hence it remains away from $\Gamma$ for positive values of $s$.}
\end{center}
\end{figure}

We next claim that $\theta(s)$ never intersects the curve $\Gamma_0$. Indeed, assume that $\theta(s_0)=(x_0,y_0)\in \Gamma_0$ for some $s_0\in (0,s_*)$, and that $\theta(s)\not\in \Gamma_0$ for all $s\in (0,s_0)$. It follows then from \eqref{odesi} that the slope of the tangent line to $\theta(s)$ at $(x_0,y_0)$ is $$m=-\frac{\sqrt{1-y_0^2}}{y_0} f(\sqrt{1-y_0^2} \coth x_0) = -\frac{\sqrt{1-y_0^2}}{y_0} f(\alfa_0)<0,$$ where the second equality comes from $(x_0,y_0)\in \Gamma_0$. On the other hand, a direct calculus shows that the slope of the tangent line to $\Gamma_0$ at $(x_0,y_0)$ is $$m_0=  -\frac{\sqrt{1-y_0^2}}{y_0} \frac{\alfa_0}{\cosh^2 x_0}<0.$$ Since $\alfa_0<\alfa$, $f(\alfa)=\alfa$ and $f'<0$, it follows that $f(\alfa_0)>\alfa$, and so $m^2 >m_0^2$. This is a contradiction, since the graph $x=g(y)$ that defines $\theta(s)$ lies, for $s\in (0,s_0)$, in the left side of $\Gamma_0$, i.e. it satisfies $g(y)<\Gamma_0(y)$ for all $y\in (y_0,1)$.

Consequently, $\theta(s)$ remains globally in the left side of $\Gamma_0$. This is a contradiction with the fact that the sequence $\theta(s_n)$ approaches the curve $\Gamma$, which lies in the right side of $\Gamma_0$. This contradiction proves that $S$ has bounded second fundamental form, and completes the proof of Proposition \ref{boundh2}.
\end{proof}

As a direct consequence of Proposition \ref{boundh2} and Theorem \ref{mainth}, we have the main conclusion of this section:

\begin{theorem}\label{clash2}
Any elliptic Weingarten sphere immersed in $\H^2\times \R$ is an embedded, rotational sphere.
\end{theorem}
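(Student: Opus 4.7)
The plan is to derive Theorem~\ref{clash2} as a direct consequence of the two main ingredients already established in the paper: Proposition~\ref{boundh2}, which asserts that for any elliptic Weingarten class in $\H^2\times\R$ the canonical rotational example has bounded second fundamental form; and Theorem~\ref{mainth}, which, under that bounded-second-fundamental-form hypothesis, deduces rotational symmetry (and, when the ambient space is non-compact, embeddedness) of any Weingarten sphere. At this point of the paper both ingredients are in hand, so essentially no new argument is required.

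\textbf{Steps.} First, fix an elliptic Weingarten function $\phi$ as in \eqref{speci1} and let $\cW$ be the corresponding class of surfaces in $M=\H^2\times\R$. Since every elliptic Weingarten equation can be regarded as a general Weingarten equation with $\Phi(t,v)=\phi(t)$ independent of $v$, the class $\cW$ is a general Weingarten class in the sense of Definition~\ref{prescri}, and it admits a canonical rotational example $S$ (Definition~\ref{def:rote}). Second, apply Proposition~\ref{boundh2} to conclude that the second fundamental form of $S$ is bounded, which verifies the hypothesis of Theorem~\ref{mainth}. Third, apply Theorem~\ref{mainth} to an arbitrary immersed sphere $\Sigma$ of $\cW$: assertion~(1) of Theorem~\ref{mainth} gives that $\Sigma$ is congruent to $S$, so in particular $\Sigma$ is a sphere of revolution in $M$. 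Since $M=\H^2\times\R$ is not compact (indeed, diffeomorphic to $\R^3$), assertion~(3) of Theorem~\ref{mainth} gives that $\Sigma$ is embedded.

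\textbf{Degenerate case.} The only remaining scenario to check is that in which the canonical rotational example $S$ is not itself a sphere, as happens, for instance, for the class $H=1/2$ in $\H^2\times\R$ where $S$ is an entire rotational graph. In this situation, assertion~(2) of Theorem~\ref{mainth} guarantees that there are no immersed spheres within $\cW$ whatsoever, and the statement of Theorem~\ref{clash2} holds vacuously. Combining both scenarios covers all elliptic Weingarten classes in $\H^2\times\R$ and yields the stated conclusion.

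\textbf{Main obstacle.} There is no genuine difficulty to overcome at this stage: the substantive work for Theorem~\ref{clash2} has already been discharged in Proposition~\ref{boundh2} (where the orbit analysis in the $(x,y)=(r,r')$ phase plane rules out unbounded second fundamental form via a barrier argument comparing with the auxiliary curves $\Gamma$ and $\Gamma_0$) and in Theorem~\ref{mainth} (whose proof occupies Sections~\ref{sub:ext}--\ref{sec:s3}). The present theorem is, in effect, a two-line corollary of these two results.
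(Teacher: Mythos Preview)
Your proposal is correct and matches the paper's own treatment exactly: the paper states Theorem~\ref{clash2} as ``a direct consequence of Proposition~\ref{boundh2} and Theorem~\ref{mainth}'' without further argument, and your write-up simply unpacks that sentence. The only minor excess is the explicit handling of the ``degenerate case'' where $S$ is not a sphere, which is already absorbed into assertion~(2) of Theorem~\ref{mainth} and need not be singled out.
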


\subsection{On general elliptic Weingarten spheres in $\H^2\times \R$}\label{sec:gews}

Our approach in the previous section does not work (without additional restrictions) for the case of general elliptic Weingarten surfaces in $\H^2\times\R$, given by a relation of the form \eqref{preq}. More specifically, we show below examples of equations of the form \eqref{preq} for which the canonical rotational example of the class does not have bounded second fundamental form (thus, Proposition \ref{boundh2} does not hold in this more general context).

Let us consider a general elliptic Weingarten relation 
 \begin{equation}\label{preqc}
W(\kappa_1,\kappa_2,\nu^2)=0
 \end{equation} 
in $\H^2\times \R$, where the function $W=W(k_1,k_2,v)\in C^{\8} (\R^2\times [0,1])$ is defined by
 \begin{equation}\label{contraw}
 W(k_1,k_2,v)= (k_1-\phi_0(v))(k_2-\phi_0(v))-1,
 \end{equation}
 for $\phi_0\in C^{\8} ([0,1])$, $\phi_0>0$, that satisfies
  \begin{equation}\label{contraf}
  \phi_0(v)>\sqrt{1-v} , \hspace{1cm} \forall v\in [0,1].
  \end{equation} 
Note that for each fixed value of $v$ there are two connected components of $W^{-1}(0)$; we will work in the one given by $k_i>\phi_0(v)$, $i=1,2$.
	
Let $S_0$ denote the canonical rotational example in $\H^2\times \R$ associated to \eqref{preqc}. Following the notations in the proof of Proposition \ref{boundh2}, we let $\gamma(s)$ be its profile curve. Then, just as we did to obtain equation \eqref{odesi}, the relation \eqref{preqc}-\eqref{contraw} can be written as an autonomous system of ODEs 

\begin{equation}\label{odesi2}
\left. \def\arraystretch{1.5}\begin{array}{lll} x' & = & y \\ y' & = & -\sqrt{1-y^2} \, f\left(\sqrt{1-y^2} \coth x,y^2 \right) \end{array} \right\},
\end{equation}
where $f(u_1,u_2)$ is defined whenever $u_1>\phi_0(u_2)$, and given by
 \begin{equation}\label{odesi3}
 f(u_1,u_2):= \phi_0(u_2) + \frac{1}{u_1-\phi_0(u_2)}.
\end{equation}
Observe that in \eqref{odesi2} we have the restrictions $x>0$, $y\in (-1,1)$ and $\sqrt{1-y^2}\coth x > \phi_0(y^2)$.

Let $\theta_0(s):=(x_0(s),y_0(s))$ denote the orbit of \eqref{odesi2} that corresponds to the canonical rotational example $S_0$; that is, $x_0(s)=r(s)$ and $y_0(s)=r'(s)$ for the function $r(s)$ associated to the profile curve $\gamma(s)$ of $S_0$. In particular, $x_0(s)$ and $y_0(s)$ satisfy the restrictions above. Also note that $\theta_0(s)\to (0,1)$ as $s\to 0^+$.

Choose now $\ep>0$ small enough so that $\theta_0(s)$ is defined for all $s\in (0,\ep]$, and take $(x^{\ep},y^{\ep}):=(x_0(\ep),y_0(\ep))$, with $y^{\ep}>0$. We define next a new function $\phi\in C^{\8} ([0,1])$ so that:

\begin{enumerate}
\item
$\phi=\phi_0$ in $[(y^{\ep})^2,1]$, and $\phi>\phi_0$ in $[0,(y^{\ep})^2)$.
 \item
$\phi ((y^{\ep})^2/4)> {\rm tanh} (x^{\ep}) / \sqrt{1-(y^{\ep})^2/4}$.
\end{enumerate}

\begin{proposition}\label{popu0}
Let $S$ be the canonical rotational example of the class of elliptic Weingarten surfaces $\cW_{\phi}$ in $\H^2\times \R$ given by \eqref{preqc}, where $W=W(k_1,k_2,v)\in C^\8(\R^2\times [0,1])$ is given by  \begin{equation}\label{contraw2}
 W(k_1,k_2,v)= (k_1-\phi(v))(k_2-\phi(v))-1,
 \end{equation}
with respect to the positive function $\phi\in C^{\8} ([0,1])$ defined above. Then, $S$ is a non-complete rotational graph with unbounded second fundamental form.
\end{proposition}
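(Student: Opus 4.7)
The plan is to show that the orbit $\theta(s)=(x(s),y(s))$ of the autonomous system \eqref{odesi2} (with the function $\phi$ in place of $\phi_0$) defining the canonical rotational example $S$ of $\cW_\phi$ hits the boundary $\Gamma_\phi:=\{\sqrt{1-y^2}\coth x=\phi(y^2)\}$ of the natural domain of its right-hand side at some finite parameter $s_*$, with $y(s_*)>0$. Once this is established, formula \eqref{odesi3} forces $\kappa_1=f(\sqrt{1-y^2}\coth x,y^2)\to+\infty$ as $s\to s_*^-$, because the denominator $\sqrt{1-y^2}\coth x-\phi(y^2)$ tends to $0^+$. Since $y(s_*)>0$ the orbit never closes up into a sphere, and since $x(s_*)<\infty$ the surface is not an entire graph, so by Proposition \ref{posrot} the canonical rotational example $S$ falls into case~(4), i.e.\ it is a non-complete rotational graph with unbounded second fundamental form.

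The first step will exploit condition~(1) of the construction, $\phi\equiv\phi_0$ on $[(y^\ep)^2,1]$: the right-hand sides of \eqref{odesi2} associated to $\cW_\phi$ and $\cW_{\phi_0}$ agree on the strip $\{y^2\geq(y^\ep)^2\}$. By the uniqueness of the canonical rotational example (Lemma \ref{empezar}) together with standard ODE uniqueness away from the rotation axis, the orbits $\theta(s)$ and $\theta_0(s)$ coincide while $y(s)\geq y^\ep$; in particular $\theta(\ep)=(x^\ep,y^\ep)$ and $\theta$ lies in the domain of $f$ at this point.

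For the second step I would invoke Lemma \ref{monan}: $y(s)$ is strictly decreasing and $x(s)$ strictly increasing on the maximal interval of existence of $\theta$. Arguing by contradiction, suppose $\theta$ can be prolonged past $s=\ep$ until it reaches $y(s_1)=y^\ep/2$ for some $s_1>\ep$. Then $x(s_1)>x^\ep$ and $\coth x(s_1)<\coth x^\ep$, so the condition that $\theta(s_1)$ lie in the domain of $f$ would force
\begin{equation*}
\phi((y^\ep)^2/4)<\sqrt{1-(y^\ep)^2/4}\,\coth x(s_1)<\sqrt{1-(y^\ep)^2/4}\,\coth x^\ep.
\end{equation*}
Condition~(2) on $\phi$ is chosen precisely to violate this inequality (once read as the geometric statement that the boundary curve $\Gamma_\phi$ passes strictly to the left of the point $(x^\ep,y^\ep/2)$), yielding the desired contradiction. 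Consequently $\theta$ must cross $\Gamma_\phi$ at some $s_*\in(\ep,s_1)$ with $y(s_*)\in(y^\ep/2,y^\ep)$, and the conclusion of the opening paragraph applies.

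The main point requiring care is matching condition~(2) with the geometric statement that the boundary curve $\Gamma_\phi$ separates $(x^\ep,y^\ep)$ from the region $\{y\leq y^\ep/2\}$; this is the only step where the detailed form of condition~(2) enters, and once correctly interpreted the remainder of the proof reduces to ODE monotonicity combined with the explicit form of $f$ in \eqref{odesi3}.
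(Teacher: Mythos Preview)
Your proposal is correct and follows essentially the same approach as the paper's proof: identify the orbits of $\cW_\phi$ and $\cW_{\phi_0}$ on $[0,\ep]$ via condition~(1), then use the monotonicity $x'>0$, $y'<0$ together with condition~(2) to force the orbit to hit the boundary curve $\Gamma_\phi$ before $y$ drops to $y^\ep/2$, whence $\kappa_1\to\infty$. Your hedging about condition~(2) is well founded: the inequality you actually need for the contradiction is $\phi((y^\ep)^2/4)\geq \sqrt{1-(y^\ep)^2/4}\,\coth x^\ep$, which is precisely the statement that $(x^\ep,y^\ep/2)$ lies to the right of $\Gamma_\phi$, whereas the paper's literal condition~(2) reads $\phi((y^\ep)^2/4)>\tanh(x^\ep)/\sqrt{1-(y^\ep)^2/4}$; these differ (the fraction appears inverted), and your geometric reading is the one that makes the argument go through.
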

\begin{proof}
First, note that in the definition of $\cW_{\phi}$ we are substituting in our previous discussion \eqref{contraw} by \eqref{contraw2}, i.e. we are substituting $\phi_0$ by $\phi$. So, accordingly, the autonomous system \eqref{odesi2}-\eqref{odesi3} also transforms by replacing $\phi_0$ with $\phi$. Let $S$ denote the canonical rotational example of $\cW_{\phi}$, and let $\theta(s)=(x(s),y(s))$ denote its associated orbit. Note that $\theta_0(s)=\theta(s)$ for all $s\in [0,\ep]$, by uniqueness of canonical solutions and by condition (1) above. Also, by the monotonicity properties of solutions to \eqref{odesi2}-\eqref{odesi3}, we have $x'(s)>0$ and $y'(s)>0$ as long as the orbit $\theta(s)$ satisfies $0<y(s)<1$. If we use now the inequality for $\phi((y^{\ep})^2/4)$ in condition (2) above, this implies that $\theta(s)$ approaches the curve $\Gamma\subset \R^2$ given by $\phi(y^2)= \sqrt{1-y^2} \, {\rm coth}\, x$ before it reaches the value $y(s)=y^{\ep}/2$; i.e. there exists $s_*>\ep$ such that $\theta(s)\to (x_*,y_*)$ as $s\to s_*$, with $x_*>x^{\ep}$, $y_*\in (y^{\ep}/2,y^{\ep})$ and $\phi(y_*^2)=\sqrt{1-y_*^2} \, {\rm coth} \, x_*$. This indicates that $S$ is a rotational, non-complete graph in $\H^2\times \R$ with unbounded second fundamental form, since its principal curvature $\kappa_1$ blows up as $s\to s_*$.
\end{proof}


However, despite the possibility of constructing such examples, the arguments in Section \ref{esth2} still work for \emph{some} classes of general Weingarten surfaces in $\H^2\times \R$. Recall that, as explained in \eqref{preq}, the curvature relation that defines a class of general elliptic Weingarten surfaces can be written as
\begin{equation}\label{preq33}
\kappa_1=f(\kappa_2,\nu^2)
\end{equation}
where for each $v\in [0,1]$ fixed, the function $f=f(\cdot, v)$ is defined on a real interval $(a,b)=(a(v),b(v))$, and satisfies conditions (i) to (iv) after equation \eqref{weq3} of Section \ref{sec:rew}.
 
 In these conditions, we have:


\begin{proposition}\label{popu}
Let $\cW$ be a class of general elliptic Weingarten surfaces in $\H^2\times \R$, given by a relation \eqref{preq33}. Assume additionally that for each $v\in [0,1]$ we have $a(v)<0$ or $a(v)=-\8$. 

Then the canonical rotational example $S$ of $\cW$ has bounded second fundamental form, and any immersed sphere of the class $\cW$ is a sphere of revolution.
\end{proposition}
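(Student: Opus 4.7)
The plan is to adapt the strategy of Proposition \ref{boundh2} to the present general Weingarten setting; the second conclusion will follow at once from the first via Theorem \ref{mainth}, so the substantive task is to prove that the canonical rotational example $S$ has bounded second fundamental form.

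First, I would argue by contradiction, assuming $|\sigma|$ blows up on $S$, and set up the same Minkowski $\mathbb{L}^4$ model and arclength parametrization $\gamma(s)=(\sinh r(s),0,\cosh r(s),h(s))$, $s\in [0,s_*)$, of the profile curve of $S$, with $r(0)=h(0)=h'(0)=0$ and $r'(0)=1$. As in the proof of Proposition \ref{boundh2}, if the angle function $\nu(s)=r'(s)$ vanishes somewhere on $(0,s_*)$ then Section \ref{sub:ext} forces $S$ to be a rotational sphere, contradicting the assumed blow-up. Hence $\nu(s)>0$ on $[0,s_*)$ and, by Lemma \ref{monan}, $\nu$ is strictly decreasing. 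The relation $r'^2+h'^2=1$ then gives $h''\geq 0$, and the formulas \eqref{pcs} yield $\kappa_1(s)\geq 0$ together with $0<\kappa_2(s)\leq C$ for a uniform constant $C<\infty$.

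The core step is to promote the ``easy case'' paragraph of Proposition \ref{boundh2} to a statement uniform in $v=\nu^2$. For each fixed $v\in [0,1]$, the hypothesis that $a(v)<0$ or $a(v)=-\infty$, combined with the properties (i)--(iv) of $f(\cdot,v)$, implies that the arc $\Gamma_v:=\{(k_1,k_2)\in [0,\infty)^2 : k_1=f(k_2,v)\}$ is a compact curve with extreme points $(f(0,v),0)$ and $(0,f^{-1}(0,v))$. The smoothness of $W$ together with the ellipticity $W_{k_1}W_{k_2}>0$ on $W^{-1}(0)\subset\mathbb{R}^2\times [0,1]$ lets me apply the implicit function theorem on this smooth hypersurface to conclude that $v\mapsto f(0,v)$ and $v\mapsto f^{-1}(0,v)$ are continuous, hence bounded, on the compact interval $[0,1]$. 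Therefore every $\Gamma_v$ is contained in a single compact box of $[0,\infty)^2$. Since $(\kappa_1(s),\kappa_2(s))\in \Gamma_{\nu(s)^2}$ for every $s\in [0,s_*)$, the function $\kappa_1(s)$ is uniformly bounded, contradicting the blow-up of $|\sigma|$ along the sequence $(s_n)$.

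The main obstacle I anticipate is exactly this uniform control in $v$, since the paper explicitly warns that the intervals $(a(v),b(v))$ need not depend continuously on $v$. The resolution is that continuity of the endpoints $a(v),b(v)$ themselves is not required: what matters is only that the two specific points $(f(0,v),0,v)$ and $(0,f^{-1}(0,v),v)$ lie on the smooth hypersurface $W^{-1}(0)$, where the ellipticity supplies the implicit function theorem and hence continuous dependence on $v\in[0,1]$. Once the uniform bound on $\kappa_1$ is in place, the first assertion follows, and Theorem \ref{mainth} immediately yields the rotational symmetry of any immersed sphere in $\cW$, completing the proof.
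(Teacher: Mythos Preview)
Your approach is essentially the same as the paper's, which consists of a single sentence observing that the ``easy case'' ($a<0$ or $a=-\infty$) of the proof of Proposition~\ref{boundh2} carries over verbatim. You have in fact been more careful than the paper: the uniformity in $v\in[0,1]$ of the bound on $\kappa_1$ is a genuine point that the paper leaves implicit, and your implicit-function-theorem argument for the continuity of $v\mapsto f(0,v)$ (hence its boundedness on the compact interval $[0,1]$) is correct and fills that gap cleanly.
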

\begin{proof}
For the first statement, we simply need to observe that all the arguments in the first part of the proof of Proposition \ref{boundh2} (i.e. those corresponding to $a<0$ or $a=-\8$) also hold in this more general context under the assumptions that $a(v)<0$ or $a(v)=-\8$. The second statement is then a direct consequence of Theorem \ref{mainth}.
\end{proof}

As an immediate corollary, we have the following result, that extends the Abresch-Rosenberg theorem for CMC spheres in $\H^2\times \R$ to the case of (non-constant) prescribed mean curvature (compare it also with Corollary \ref{prh}, valid only for $\Phi>0$).

\begin{corollary}
Let $\Sigma$ be an immersed sphere in $\H^2\times \R$ whose mean curvature $H$ and angle function $\nu$ satisfy $$H=\Phi(\nu^2)$$ for some $\Phi\in C^{\8} ([0,1])$. Then, $\Sigma$ is a rotational sphere.
\end{corollary}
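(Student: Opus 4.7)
The plan is to derive this corollary as a direct consequence of Proposition \ref{popu}, by checking that the equation $H=\Phi(\nu^2)$ fits the framework of general elliptic Weingarten equations of the form \eqref{preq33} satisfying the domain hypothesis $a(v)<0$ or $a(v)=-\8$ for every $v\in[0,1]$.

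First, I would rewrite the given curvature relation in terms of the principal curvatures $\kappa_1,\kappa_2$ and the angle function $\nu$. Since $H=(\kappa_1+\kappa_2)/2$, the equation $H=\Phi(\nu^2)$ becomes
\begin{equation*}
\kappa_1 = 2\Phi(\nu^2)-\kappa_2,
\end{equation*}
i.e.\ $\kappa_1=f(\kappa_2,\nu^2)$ with $f(t,v):=2\Phi(v)-t$. This is clearly of the general Weingarten form \eqref{preq}, and indeed corresponds to the equation \eqref{preq2} with $\Phi(H^2-K_e,\nu^2)$ independent of its first variable, so the ellipticity condition $4t(\partial \Phi/\partial t)^2<1$ is trivially satisfied; equivalently, $f'(\cdot,v)\equiv -1<0$, which gives the ellipticity in the form \eqref{preq3}.

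Next, I would observe that for every fixed $v\in[0,1]$, the function $t\mapsto f(t,v)=2\Phi(v)-t$ is defined on all of $\R$. Hence, in the notation of \eqref{preq33}, we have $(a(v),b(v))=(-\8,+\8)$ for every $v\in[0,1]$, and in particular $a(v)=-\8$ for every $v$. Consequently, the hypotheses of Proposition \ref{popu} are fulfilled by the class $\cW$ of immersed oriented surfaces in $\H^2\times\R$ satisfying $H=\Phi(\nu^2)$.

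Applying Proposition \ref{popu} to $\cW$, the canonical rotational example $S$ of $\cW$ has bounded second fundamental form, and every immersed sphere of $\cW$ is a rotational sphere. Since by hypothesis $\Sigma$ is an immersed sphere in $\cW$, we conclude that $\Sigma$ is a rotational sphere in $\H^2\times\R$, which is the desired conclusion. No step here is an obstacle; the entire content lies in recognizing $H=\Phi(\nu^2)$ as a special case of \eqref{preq33} with $a(v)=-\8$, so that the heavier analysis carried out in the proof of Proposition \ref{boundh2} can be invoked through Proposition \ref{popu}.
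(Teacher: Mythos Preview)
Your proposal is correct and follows exactly the approach intended by the paper, which presents this result as an immediate corollary of Proposition \ref{popu}. You have simply made explicit the verification that $H=\Phi(\nu^2)$, rewritten as $\kappa_1=2\Phi(\nu^2)-\kappa_2$, is a general elliptic Weingarten relation of the form \eqref{preq33} with $f(\cdot,v)$ defined on all of $\R$, so that $a(v)=-\infty$ and Proposition \ref{popu} applies.
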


\subsection{On Weingarten spheres in $\S^2\times \R$}\label{ests2}
The proof of Proposition \ref{boundh2} does not work in general if the ambient space $\H^2\times \R$ is substituted by $\S^2\times \R$. An important reason for that is the behavior of the principal curvature $\kappa_2(s)$ in this setting. Specifically, if we view $\S^2\times \R\subset \R^4$ and let $$\gamma(s)=(\sin r(s),0,\cos r(s),h(s))$$
be the profile curve of a rotational surface, with $r'^2+h'^2=1$, $r(0)=h(0)=0$, $h'(0)=0$ and $r(s)>0$ if $s>0$, then the angle function of the surface is $\nu(s)=r'(s)$ and the principal curvatures of $S$ are 
 \begin{equation}\label{pcs2}
 \kappa_1 = r'h'' - r'' h', \hspace{1cm} \kappa_2 = h' \cot r.
 \end{equation}
Note that, in contrast with \eqref{pcs}, this time $\kappa_2$ can be negative even if $h'>0$ and $r>0$.

Moreover, we can show that Proposition \ref{boundh2} does not hold in $\S^2\times \R$. Specifically, the next example proves that for some classes of elliptic Weingarten surfaces in $\S^2\times \R$, the canonical rotational example reaches its \emph{antipodal axis} forming a singularity around it, and so that the principal curvature $\kappa_2$ blows up at that point.

\begin{example}
\emph{Let us view $\S^2\times \R\subset \R^4$. Let $S$ denote a rotational surface in $\S^2\times \R$ with rotation axis $L\equiv (1,0,0)\times \R$, parametrized as} $$\psi(\rho,\theta)=(\sin \rho \cos \theta, \sin \rho \sin \theta, \cos \rho, h(\rho)),$$ \emph{where $\rho\in (0,\pi)$, $\theta\in [0,2\pi)$ and so that $h(\rho)\in C^{\8}([0,\pi])$ satisfies:}

\begin{enumerate}
\item
\emph{$h(0)=h'(0)=0$, $h''(0)>0$.}
 \item
\emph{$h''(\rho)>0$ for every $\rho\in (0,\pi)$.}
 \item
\emph{$\kappa_1'(\rho) \kappa_2'(\rho)<0$ for every $\rho\in (0,\pi)$, where }
 \begin{equation}\label{kij}
 \kappa_1(\rho)= \frac{h''(\rho)}{(1+h'(\rho)^2)^{3/2}}, \hspace{1cm} \kappa_2(\rho)= \frac{h'(\rho) \cot (\rho)}{\sqrt{1+h'(\rho)^2}}.
 \end{equation}
  \item
There exists $\lim_{\rho\to 0} \kappa_1'(\rho)/\kappa_2'(\rho) = -1$.
\end{enumerate}
\emph{One can construct functions $h(\rho)$ in these conditions, for instance, by considering an even polynomial of the form $$h(\rho)= \frac{R}{2} \rho^2 + \frac{(R+ \delta)^3}{8} \rho^4,$$ with adequate constants $R,\delta>0$; for example, the following choices make all of the previous conditions hold:} $$R=1/20, \hspace{1cm} \delta =-R + \left(\frac{R}{6}+R^3\right)^{1/3}.$$

\emph{The first condition above indicates that the surface $S$ intersects its rotation axis $L$ orthogonally at the point $(1,0,0,0)$ (this corresponds to the value $\rho=0$). The second condition ensures that the angle function of $S$, given by $$\nu(\rho)=\frac{1}{\sqrt{1+h'(\rho)^2}},$$ is strictly decreasing. In this way, $S$ is a graph over $\Omega = \S^2\setminus\{(-1,0,0)\}$, which presents a conical singularity when $\rho=\pi$; that is, the rotational surface $S$ intersects its antipodal fiber $L^*$, but does so in a singular way.}

\emph{Finally, the third and fourth conditions prove that $S$ is an elliptic Weingarten surface in $\S^2\times \R$, as we explain next. The functions $\kappa_i(\rho)$ appearing in \eqref{kij} are the principal curvatures of $S$. Thus, the condition $\kappa_1'(\rho)\kappa_2'(\rho)<0$ indicates that $\kappa_1(\rho)$ is strictly increasing and $\kappa_2(\rho)$ is strictly decreasing, or vice versa. In any case, we can deduce that $S$ satisfies an elliptic Weingarten equation of the form $\kappa_1=f(\kappa_2)$, where $f(t)$ is smooth, with $f'<0$. The fourth condition implies that the slope of the curve $k_1=f(k_2)$ in the $(k_1,k_2)$-plane is $-1$ at the point where it meets the diagonal $k_1=k_2$, and thus the Weingarten relation can be rewritten as $W(\kappa_1,\kappa_2)=0$ with $W(k_1,k_2)$ symmetric in $(k_1,k_2)$.}

\end{example}

It is also possible to modify the previous example, and create situations where the canonical example of a class of elliptic Weingarten surfaces in $\S^2\times \R$ has unbounded second fundamental form but remains at a positive distance from its antipodal axis.

Some of the ideas developed in Sections \ref{esth2} and \ref{sec:gews} for $\H^2\times \R$ also work for some special classes of Weingarten surfaces in $\S^2\times\R$, but we will not follow that line of inquiry here.

\section*{Appendix 1: Statement of the results in $\R^3$ and $\S^3$}


As explained in Remark \ref{ekte}, the Euclidean space $\R^3$ can be seen as a degenerate case of the spaces $\Ek$, obtained by choosing $\kappa=\tau=0$. A similar situation happens in the round sphere $\S^3(c)$, which can be recovered as the degenerate case of the spaces $\Ek$ for $\kappa=4\tau^2=4c>0$. Note that $\S^3(c)$ can be viewed as $(\S^3,g)$ with the metric $g$ in \eqref{metrige} for $\kappa=4\tau^2$.

It then turns out that the results that we have obtained here for $M=\Ek$ also work for $M=\R^3$ and $\S^3(c)$, in general with simpler arguments and computations. Next, we state some of these results explicitly.

Let $M$ be $\R^3$ or $\S^3(c)$, and let $\xi$ denote a unit Killing field on $M$. 
Then, for any immersed oriented surface $\Sigma$ in $M$ we can define its \emph{angle function} in the direction $\xi$ as $\nu:=\esiz \eta,\xi\esde \in C^{\8}(\Sigma)$, where $\eta$ is the unit normal of $\Sigma$ in $M$.

In these conditions, the definition of a \emph{general (elliptic) Weingarten surface} given in Definition \ref{prescri} (and its equivalent definitions \eqref{preq2} and \eqref{preq3}) also makes sense when $M=\R^3,\S^3(c)$. We remark that, in $\R^3$, this general Weingarten equation $W(\kappa_1,\kappa_2,\nu^2)=0$ falls into the more general class of surfaces governed by prescribed curvature equations $W(\kappa_1,\kappa_2,\eta)=0$, where $\eta$ is the unit normal of the surface. For results dealing with the uniqueness of immersed spheres in $\R^3$ satisfying this type of prescribed curvature equations, see e.g. \cite{A,GM3,HW} and references therein.

As proved for $\Ek$-spaces, given a general Weingarten class $\cW$ in $M$, there is an inextendible rotational surface $S$ of the class $\cW$ with rotation axis $L$ tangent to $\xi$, and such that $S$ touches $L$ orthogonally at some point. Then, the following result holds, which corresponds to Theorem \ref{mainth} for our situation.

\begin{theorem}\label{mtc}
Let $M=\R^3$ or $\S^3(c)$, and let $\cW$ denote a general Weingarten class of surfaces in $M$. Assume that the canonical rotational example $S$ of $\cW$ has bounded second fundamental form.

Then, any immersed sphere of the class $\cW$ is a rotational sphere. If $M=\R^3$, this sphere is actually strictly convex, i.e. an ovaloid.
\end{theorem}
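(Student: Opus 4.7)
The plan is to mimic the proof of Theorem \ref{mainth} on the isotropic ambient manifolds $\R^3$ and $\S^3(c)$, both of which fit into the formal framework of the preceding sections once one allows the degenerate choices $\kappa=\tau=0$ for $\R^3$ and $\kappa=4\tau^2=4c$ for $\S^3(c)$, as noted in Remark \ref{ekte}. All preliminary results of Sections \ref{sec:canonical} and \ref{sub:ext}, including Lemma \ref{monan} on the monotonicity of the angle function and the classification of the asymptotic geometry of the canonical example given by Proposition \ref{posrot}, carry over verbatim.

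First, I would invoke Proposition \ref{posrot}: since the second fundamental form of $S$ is bounded by hypothesis, $S$ falls into one of three cases: (i) an entire rotational graph, (ii) a rotational sphere, or (iii) a proper rotational graph asymptotic to a vertical cylinder in the coordinate model. When $M=\R^3$, the arguments of Section \ref{sec:r3} apply word-for-word: in case (i), the maximum principle together with invariance under translations along $\xi$ precludes the existence of any compact element of $\cW$; in cases (ii) and (iii), Lemma \ref{monan} supplies the strict monotonicity of $\nu$ along the profile curve of $S$, and Lemma \ref{gaussmap} (augmented in case (iii) by the limit cylinder and the $180^\circ$-rotated graph $S'$) exhibits a transitive family $\cS$ in $TU(\R^3)$. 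Theorem 2.4 of \cite{GM3} then identifies any immersed sphere $\Sigma\in\cW$ with an element of $\cS$, yielding a rotational sphere in case (ii) and no sphere at all in case (iii). When $M=\S^3(c)$, exactly the same argument works in cases (ii) and (iii), while case (i) is handled by the analysis of Section \ref{sec:s3}, whose ODE computation in stereographic coordinates shows that the entire graph in the coordinate chart closes up smoothly across the antipodal great circle into a rotational sphere, producing again a transitive family.

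The only real verification needed is that Lemma \ref{incli} and the construction of the transitive family are compatible with the isotropy of $M$; but since $\mathrm{Iso}^0(M)$ now acts transitively on $TU(M)$, this only simplifies the argument. Similarly, the smooth gluing of $S$ at its rotation axis provided by Lemma \ref{empezar} and Case 1 of Section \ref{sub:ext} works unchanged. The principal technical step I expect to be mildly delicate is the $\S^3(c)$-analogue of the closing-up argument of Section \ref{sec:s3}, but it is strictly easier because the ambient metric is itself a round metric, so the stereographic expressions simplify.

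Finally, for the ovaloid conclusion when $M=\R^3$: the rotational sphere $S$ is a symmetric bi-graph whose upper half is a radial graph $x_3=u(\rho)$ with $u(0)=u'(0)=0$. By Lemma \ref{monan}, $u'(\rho)>0$ for $\rho\in(0,R)$ and the angle function $\nu(\rho)=1/\sqrt{1+u'(\rho)^2}$ is strictly decreasing, which forces $u'(\rho)u''(\rho)>0$ and hence $u''(\rho)>0$. A direct calculation gives the principal curvatures
\begin{equation*}
\kappa_1(\rho)=\frac{u''(\rho)}{(1+u'(\rho)^2)^{3/2}}>0, \qquad \kappa_2(\rho)=\frac{u'(\rho)}{\rho\sqrt{1+u'(\rho)^2}}>0,
\end{equation*}
for every $\rho\in(0,R)$, both extending by continuity to the common positive umbilicity constant $\alpha$ at the pole $\rho=0$; a symmetric calculation on the lower hemisphere and a horizontal parametrization around the equator $\rho=R$ cover the remaining points. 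Thus $K_e>0$ everywhere on $S$, so $S$ is an ovaloid, completing Theorem \ref{mtc}.
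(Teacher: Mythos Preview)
Your proposal is essentially correct and follows the same approach as the paper, which in Appendix 1 simply asserts that the arguments of Theorem \ref{mainth} carry over to $\R^3$ and $\S^3(c)$ with simpler computations; you have correctly identified which parts of Sections \ref{sec:r3} and \ref{sec:s3} apply in each case and why isotropy only makes Lemma \ref{incli} easier.

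One small gap worth flagging is in the ovaloid argument: from Lemma \ref{monan} you only get that $\nu(\rho)$ is strictly decreasing in the sense that $\nu'\leq 0$ and $\nu$ is not constant on any subinterval, which yields $u''(\rho)\geq 0$ rather than $u''(\rho)>0$; in particular, strict monotonicity alone does not exclude isolated zeros of $u''$, so the displayed inequality $\kappa_1(\rho)>0$ is not yet justified pointwise. A clean way to close this is to observe that the set $\{\rho\in(0,R):u''(\rho)>0\}$ is open and dense (by the non-constancy of $\nu$ on intervals), and then use the Weingarten relation $\kappa_1=f(\kappa_2,\nu^2)$ together with the ODE in normal form \eqref{edonor1} to argue by continuity, or alternatively note that bijectivity of the Gauss map $S\to\S^2$ (which follows from the strict monotonicity of $\nu$) combined with compactness forces $K_e$ to have constant sign away from a closed set of measure zero, and then invoke the elliptic equation to rule out interior zeros of $K_e$.
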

The statement of Theorem \ref{bomi} also holds if $M=\R^3$ or $\S^3(c)$. Thus, its Corollary \ref{prh} also holds in these cases. We must note, however, that Corollary \ref{prh} was already known if $M=\R^3$, as a consequence of previous results by the authors; see \cite{BGM,GM,GM3}.

Similarly, the Minkowski-type result in Theorem \ref{th:min} also holds for $M=\R^3$ or $\S^3(c)$, but for $\R^3$ it is an immediate consequence of the classical solution to the Minkowski problem. For $M=\S^3(c)$, Theorem \ref{th:min} seems new, and is motivated by the following natural extension of the classical Minkowski problem (which Theorem \ref{th:min} solves in the case that $\mathcal{K}$ is rotationally symmetric in some direction).

\vspace{0.2cm}

{\bf The left-invariant Minkowski problem in $\S^3$:} \emph{given $\mathcal{K}\in C^\8(\S^2)$, $\mathcal{K}>0$, prove existence and uniqueness of a strictly convex sphere $\Sigma\subset \S^3$ whose extrinsic curvature $K_e$ satisfies $K_e=\mathcal{K}\circ g$, where here $g:\Sigma\flecha \S^2\subset T_e\S^3$ is the left invariant Gauss map of $\Sigma$, obtained by viewing $\S^3$ as the Lie group ${\rm SU}(2)$ with a bi-invariant metric, and left-translating the unit normal $\eta$ of $\Sigma$ to the identity element $e$ of ${\rm SU}(2)$.}

\vspace{0.2cm}

Proposition \ref{popu} also holds when the ambient space is $\R^3$ instead of $\H^2\times \R$. In $\R^3$ one can also create, similarly to Proposition \ref{popu0}, examples of general Weingarten classes $\cW$ for which the canonical rotational example $S$ is a non-complete graph of unbounded second fundamental form.

\section*{Appendix 2: On the geometry of Berger spheres}

In this Appendix we describe in more detail the geometry of Berger spheres, i.e. of the $\Ek$ spaces with $\kappa>0$ and $\tau\neq 0$, all of which are diffeomorphic to $\S^3$. 

Consider first of all $\S^3=\{(z,w)\in \C^2 : |z|^2+|w|^2=1\}$. A basis of the tangent bundle of $\S^3$ is given by the vector fields 
 \begin{equation}\label{ecuchi}
e_1 = (-\overline{w},\overline{z}), \hspace{0.5cm} e_2 = (-i\overline{w},i \overline{z}), \hspace{0.5cm} \hat{\xi} = (iz, iw).
 \end{equation}
 The Berger sphere $M=\Ek$ can be seen then as $(\S^3,g)$, where $g$ is the Riemannian metric on $\S^3$ given for any $X,Y\in T\S^3$ by
 
 \begin{equation}\label{metrige}
  g(X,Y)= \frac{4}{\kappa}\left(\esiz X,Y\esde + \left(\frac{4\tau^2}{\kappa}-1\right) \esiz X, \hat{\xi}\esde \esiz Y, \hat{\xi}\esde\right),
   \end{equation}
 where $\esiz,\esde$ denotes the Euclidean metric in $\R^4\equiv \C^2$. 
 
Let $\S^2(\kappa)$ denote the two-dimensional sphere of constant curvature $\kappa>0$, which we will view as $\S^2(\kappa)=\{x\in \R^3 : \esiz x,x\esde = 1/\kappa\}$. Then, the Hopf fibration $\pi :(\S^3,g)\flecha \S^2(\kappa)$, given by $$\pi(z,w)=\frac{2}{\sqrt{\kappa}} \left(z\overline{w}, \frac{1}{2}(|z|^2-|w|^2)\right)$$ is a Riemannian submersion, with kernel generated by the vector field $\hat{\xi}$, which is a Killing field of $(\S^3,g)$ of constant length. Thus, $\pi$ corresponds to the canonical submersion of $\Ek$, and $\xi=\hat{\xi}/|\hat{\xi}|$ is the vertical unit Killing field of $\Ek$.

In order to construct a canonical coordinate model $\Rk$ associated to the space, we first parametrize $\S^2(\kappa)\setminus \{(0,0,-1/\sqrt{\kappa})\}$ by inverse stereographic projection:

$$\varphi (x_1,x_2)=\left(\landa x_1, \landa x_2, \frac{1}{\sqrt{\kappa}} (1-2\landa)\right) : \R^2\flecha \S^2(\kappa)\setminus \{(0,0,-1/\sqrt{\kappa})\},$$ where 
 \begin{equation}\label{elan}
\landa = \frac{1}{1+ \frac{\kappa}{4} (x_1^2+x_2^2)}.
 \end{equation}
This provides the $(x_1,x_2)$-coordinates in $\Rk$. We let the $x_3$-coordinate of $\Rk$ be the unit speed parametrization of the fiber $\pi^{-1}(\varphi(x_1,x_2))$. In this way, we obtain that the coordinates $(x_1,x_2,x_3)\in \R^3$ cover $\S^3$ minus the fiber $\pi^{-1} ((0,0,-1/\sqrt{\kappa}))$, which equals $L^*:=\{(e^{i\theta},0): \theta \in \R\}$. More specifically, this $\Rk$ model corresponds then to the universal cover of $\S^3\setminus L^*$, and two points $(x_1,x_2,x_3)$ and $(x_1,x_2,x_3 + 8\tau \pi/\kappa)$ correspond to the same point of $\S^3$.

Explicitly, we have the isometric immersion $\Psi: (\Rk,ds^2)\flecha (\S^3\setminus L^*,g) $,
\begin{equation}\label{io}
\Psi(x_1,x_2,x_3)=\sqrt{\landa} \left(\frac{\sqrt{\kappa}}{2}(x_1 +i x_2) e^{i\sigma x_3}, e^{i\sigma x_3}\right), \hspace{0.5cm} \sigma:=\frac{\kappa}{4\tau}.
\end{equation}
where $\landa$ is given by \eqref{elan} and $ds^2$ is the metric \eqref{eq:metric}.

Note that the $x_3$-axis in $\Rk$ corresponds to the universal cover of the fiber $L:=\{(0,e^{i\theta}) : \theta \in \R\}$ of $\S^3$, and that the fiber $L^*$ does not appear in this $\Rk$ model. In order to have a model for $\S^3$ where both $L,L^*$ appear, we consider the stereographic projection of $\S^3 \setminus \{p_N\}\subset \R^4$ into $\R^3$, $p_N:=(0,0,0,1)$, given by 
 \begin{equation}\label{newc}
(x_1,x_2,x_3,x_4)\mapsto (y_1,y_2,y_3):=\frac{1}{1-x_4} (x_1,x_2,x_3).
 \end{equation}
In these $(y_1,y_2,y_3)$-coordinates, $L\setminus \{p_N\}$ 
corresponds to the $y_3$-axis, while the fiber $L^*$ corresponds diffeomorphically to the circle $\{y_1^2+y_2^2=1, y_3=0\}$.

With these different models in hand, we turn now our attention to rotational surfaces $S$ in $(\S^3,g)$. Assume that the rotation axis of  $S$ is the fiber $L$, and that $S$ remains away from its \emph{antipodal fiber} $L^*$. Then, in the $\Rk$ model, $S$ defines a rotational surface $\hat{S}$ around the $x_3$-axis, that can be parametrized as $$\psi(u,v)=(\rho(u) \cos v, \rho (u) \sin v, h(u)).$$ In the $(y_1,y_2,y_3)$-coordinates for $\S^3\setminus\{p_N\}$, $S$ is given by 
 \begin{equation}\label{af2}
\varphi(u,v)= \frac{1}{\sqrt{1+\frac{\kappa}{4} \rho^2} - \sin(\sigma h)}\left(\frac{\sqrt{\kappa}}{2} \rho \cos (\theta + \sigma h), \frac{\sqrt{\kappa}}{2} \rho \sin (\theta + \sigma h), \cos(\sigma h)\right),
 \end{equation} where $\sigma:=\frac{\kappa}{4\tau}$ and we are denoting $\rho=\rho(u)$, $h=h(u)$. We note that the surface in $\R^3$ given by \eqref{af2} is a rotational surface around the $y_3$-axis, with profile curve in the $(y_1,y_3)$-plane given by
 \begin{equation}\label{af3}
 \gamma(u)= \frac{1}{\sqrt{1+\frac{\kappa}{4} \rho(u)^2} - \sin(\sigma h(u))}\left(\frac{\sqrt{\kappa}}{2} \rho(u), \cos(\sigma h(u))\right), \hspace{0.5cm} \sigma:= \frac{\kappa}{4\tau}. \end{equation} 

When the rotational surface $S$ approaches its opposite axis $L^*$ in $\S^3$, the radius $\rho$ of its associated surface $\hat{S}$ in the $\Rk$ model blows up, while in the $(y_1,y_2,y_3)$-coordinates, the profile curve \eqref{af3} converges to the point $(1,0)$.

\def\refname{References}

\vskip 0.2cm

\noindent José A. Gálvez

\noindent Departamento de Geometría y Topología,\\ Universidad de Granada (Spain).

\noindent  e-mail: {\tt jagalvez@ugr.es}

\vskip 0.2cm

\noindent Pablo Mira

\noindent Departamento de Matemática Aplicada y Estadística,\\ Universidad Politécnica de Cartagena (Spain).

\noindent  e-mail: {\tt pablo.mira@upct.es}

\vskip 0.4cm

\noindent Research partially supported by MINECO/FEDER Grant no. MTM2016-80313-P and Programa de Apoyo a la Investigacion,
Fundación Séneca-Agencia de Ciencia y Tecnologia
Region de Murcia, reference 19461/PI/14.

\end{document}